\newcommand{\mm}{\mathfrak m}
\newcommand{\nn}{\mathfrak n}
\newcommand{\pp}{\mathfrak p}
\newcommand{\Z}{\mathbb{Z}}
\newcommand{\N}{\mathbb{N}}
\newcommand{\Fc}{\mathcal{F}}
\newcommand{\Gc}{\mathcal{G}}
\newcommand{\up}[1]{{{}^{#1}\!}}
\DeclareMathOperator{\chara}{char}
\DeclareMathOperator{\depth}{depth}
\DeclareMathOperator{\dstab}{dstab}
\DeclareMathOperator{\glind}{gl\,ld}
\DeclareMathOperator{\gr}{gr}
\DeclareMathOperator{\id}{id}
\DeclareMathOperator{\lcm}{lcm}
\DeclareMathOperator{\lind}{ld}
\DeclareMathOperator{\linp}{lin}
\DeclareMathOperator{\lstab}{lstab}
\DeclareMathOperator{\Min}{Min}
\DeclareMathOperator{\pnt}{\raise 0.5mm \hbox{\large\bf.}}
\DeclareMathOperator{\projdim}{pd}
\DeclareMathOperator{\pstab}{pstab}
\DeclareMathOperator{\reg}{reg}
\DeclareMathOperator{\rstab}{rstab}
\DeclareMathOperator{\supp}{supp}
\DeclareMathOperator{\Tor}{Tor}
\newcommand{\vphi}{\varphi}
\newcommand{\veps}{\varepsilon}
\newtheorem{thm}{\bf Theorem}[section]
\newtheorem{lem}[thm]{\bf Lemma}
\newtheorem{cor}[thm]{\bf Corollary}
\newtheorem{prop}[thm]{\bf Proposition}
\theoremstyle{definition}
\newtheorem{defn}[thm]{\bf Definition}
\theoremstyle{remark}
\newtheorem{rem}[thm]{Remark}
\newtheorem{ex}[thm]{Example}
\theoremstyle{remark}
\newtheorem*{rem*}{Remark}
\numberwithin{equation}{section}
\title[Powers of sums]{Powers of sums and their homological invariants}
\author{Hop D. Nguyen}
\address{Dipartimento di Matematica, Universit\`a di Genova, Via Dodecaneso 35, 16146 Genoa, Italy}
\address{Fakult\"at f\"ur Mathematik, Otto von Guericke Universit\"at Magdeburg, Universit\"atsplatz 2, 39106 Magdeburg, Germany}
\email{ngdhop@gmail.com}
\dedicatory{Dedicated to the memory of Diana Taylor \textup{(1941--2016)}}
\author{Thanh Vu}
\address{Department of Mathematics, University of Nebraska-Lincoln, Lincoln, NE 68588, USA}
\email{tvu@unl.edu}
\thanks{This work is partially supported by the NSF grant DMS-1103176.}
\subjclass[2010]{13D02, 13C05, 13D05, 13H99}
\keywords{Powers of ideals; sum of ideals; depth; regularity; Castelnuovo--Mumford regularity, linearity defect.}
\begin{document}

\begin{abstract}
Let $R$ and $S$ be standard graded algebras over a field $k$, and $I \subseteq R$ and $J \subseteq S$ homogeneous ideals. Denote by $P$ the sum of the extensions of $I$ and $J$ to $R\otimes_k S$. We investigate several important homological invariants of powers of $P$ based on the information about $I$ and $J$, with focus on finding the exact formulas for these invariants. Our investigation exploits certain Tor vanishing property of natural inclusion maps between consecutive powers of $I$ and $J$. As a consequence, we provide fairly complete information about the depth and regularity of powers of $P$ given that $R$ and $S$ are polynomial rings and either $\chara k=0$ or $I$ and $J$ are generated by monomials.
\end{abstract}

\maketitle

\section{Introduction}
Let $R$ be a standard graded algebra over a field $k$, i.e. $R$ is a noetherian $\N$-graded ring with $R_0=k$ and $R$ is generated as an algebra over $k$ by elements of degree $1$. Let $\mm$ be the unique graded maximal ideal of $R$ and $I\subseteq \mm$ a homogeneous ideal of $R$. Studying the asymptotic behavior of the algebraic invariants associated to $I$ has been an important problem and has attracted much attention of commutative algebraists and algebraic geometers. A classical result states that the
Hilbert--Samuel function associated to any $\mm$-primary ideal is eventually a polynomial function. Another
well--known result due to Brodmann \cite{Br} establishes, for $s\gg 0$, that $\depth I^s$ is a
constant depending only on $I$.

In another direction, by results due to Cutkosky, Herzog, Kodiyalam, N.V. Trung and Wang \cite{CHT, K, TW}, the Castelnuovo--Mumford regularity $\reg I^s$ is a linear function of $s$ for $s\gg 0$. By definition, for a finitely generated graded $R$-module $M$, its Castelnuovo--Mumford regularity is $\reg M= \sup\{i+j: H^i_{\mm}(M)_j\neq 0\}$ where $H^i_{\mm}(M)$ denotes the $i$th local cohomology of $M$ with support in $\mm$. For more information on asymptotic properties of powers of ideals, see, e.g., \cite{BHH, Ch1, Ch, ChJR, EH, EU, HH1, NV, Ro2} and their references.

In this paper, we study several homological invariants of powers of sums of ideals. In detail, let $R$ and $S$ be standard graded $k$-algebras, $I$ and $J$ non-zero, proper homogeneous ideals of $R$ and $S$, respectively. Denote by $P$ the sum $I+J \subseteq T=R\otimes_k S$, where $I$ and $J$ are regarded as ideals of $T$. To avoid lengthy phrases later on, we call $P$ the {\it mixed sum} of $I$ and $J$. We consider the problem of characterizing (asymptotically) the homological invariants of powers of $P$, including the projective dimension, regularity, in terms of the data of $I$ and $J$. Besides the theory of asymptotic homological algebras of powers of ideals, another source of our motivation comes from recent work of H\`a, N.V. Trung and T.N. Trung \cite{HTT} on the depth and regularity of powers of $P$ in the case $R$ and $S$ are polynomial rings. While influenced by the last paper, the method of this paper is conceptually more transparent and yields more precise results under reasonable assumptions.

Our method makes substantial use of {\it Betti splittings}, first introduced by Francisco, H\`a and Van Tuyl \cite{FHV} in their study of free resolutions of monomial ideals. One of the main findings of this paper is that Betti splittings are ubiquitous and most relevant to study {\it all the powers} of mixed sums. The decomposition of $P$ as $I+J$ is probably the easiest example of a Betti splitting: letting $\beta_i(I)=\dim_k \Tor^R_i(k,I)$ be the $i$th Betti number of $I$, then the formula $\beta_i(P)=\beta_i(I)+\beta_i(J)+\beta_{i-1}(I\cap J)$ holds for any $i\ge 0$. These equations define Betti splittings in general and allow us to give a fairly complete understanding of the depth and regularity of the powers of $P$. We can prove, rather surprisingly, that the inequalities for $\depth T/P^s$ and $\reg T/P^s$ in \cite[Theorem 2.4]{HTT} are {\it both equalities} under certain conditions. 
\begin{thm}
\label{thm_depthreg}
Let $R$ and $S$ be polynomial rings over $k$. Suppose that either $\chara k=0$ or $I$ and $J$ are monomial ideals. Then for any $s\ge 1$, there are equalities
\begin{enumerate}[\quad \rm(i)]
 \item $\depth T/P^s=$
 $$\min_{i\in [1,s-1], j\in [1,s]}\left\{\depth R/I^{s-i}+\depth S/J^i+1,\depth R/I^{s-j+1}+\depth S/J^j\right\},$$
\item $\reg T/P^s=$
$$\max_{i\in [1,s-1],j\in [1,s]}\left\{\reg R/I^{s-i}+\reg S/J^i+1,\reg R/I^{s-j+1}+\reg S/J^j\right\}.$$
\end{enumerate}
\end{thm}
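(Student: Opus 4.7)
The plan is to construct a short exact sequence expressing $T/P^s$ as the kernel of a Koszul-type differential between direct sums of building blocks of the form $R/I^a\otimes_k S/J^b$, and then to combine K\"unneth additivity with a Tor-vanishing input to extract the exact formulas.

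First I would establish the identity
\[
P^s \;=\; \bigcap_{\substack{a+b=s+1\\ a,b\ge 1}}(I^a+J^b)
\]
in $T=R\otimes_k S$. Using the natural bigrading in which the $R$-variables have bidegree $(1,0)$ and the $S$-variables have bidegree $(0,1)$, all the ideals $I^a$, $J^b$, $P^s$ are bi-homogeneous, so the identity reduces to a direct combinatorial check on bigraded pieces $R_m\otimes_k S_n$. A Mayer--Vietoris--type argument for the ideals $A_i := I^i+J^{s+1-i}$ ($1\le i\le s$) then yields a short exact sequence
\[
0 \to T/P^s \to N \xrightarrow{\partial} L \to 0,
\]
where $N=\bigoplus_{a+b=s+1,\,a,b\ge 1} R/I^a\otimes_k S/J^b$, $L=\bigoplus_{a+b=s,\,a,b\ge 1} R/I^a\otimes_k S/J^b$, and $\partial$ is the signed sum of natural projections sending each $(a,b)$-summand of $N$ into its two neighboring summands $(a-1,b)$ and $(a,b-1)$ of $L$. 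Surjectivity of $\partial$ is shown by solving a cascade of compatibility equations among the summands, while $\ker\partial=T/P^s$ follows directly from the bigrading identity above.

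Since $R$ and $S$ are polynomial rings, K\"unneth for local cohomology gives the additivities $\depth(R/I^a\otimes_k S/J^b)=\depth R/I^a+\depth S/J^b$ and $\reg(R/I^a\otimes_k S/J^b)=\reg R/I^a+\reg S/J^b$. Applying the standard depth and regularity lemmas to the short exact sequence then yields
\[
\depth T/P^s \;\ge\; \min\{\depth N,\,\depth L+1\}, \qquad \reg T/P^s \;\le\; \max\{\reg N,\,\reg L+1\};
\]
after expansion via the above additivities, these are precisely the forward direction of the theorem: the $N$-piece contributes the terms indexed by $a+b=s+1$ (unshifted), and the shifted $L$-piece contributes the $+1$ terms indexed by $a+b=s$.

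For the reverse inequalities I would apply $\Tor^T(k,-)$ to the above SES and analyze the resulting long exact sequence. The key input is the Tor-vanishing of the natural inclusions $I^a\hookrightarrow I^{a-1}$ and $J^b\hookrightarrow J^{b-1}$, which is available under either $\chara k=0$ or the monomial hypothesis. Via K\"unneth for Tor, this vanishing forces the induced map $\Tor^T_i(k,R/I^a\otimes_k S/J^b)\to \Tor^T_i(k,R/I^{a-1}\otimes_k S/J^b)$ to be zero on all K\"unneth components with positive $R$-Tor index (and similarly in the $S$-direction). Tracing through the long exact sequence computes $\Tor^T_i(k,T/P^s)$ as an extension of the non-corner part of $\Tor^T_{i+1}(k,L)$ by the kernel piece coming from $\Tor^T_i(k,N)$; reading off the largest $i$ yields $\projdim T/P^s$, whence the depth equality by Auslander--Buchsbaum, and reading off the maximal $j-i$ yields the regularity equality. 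The main obstacle is this last step: one must verify that the Tor-vanishing applies cleanly to each summand of $\partial$ via K\"unneth, and check that the residual ``corner'' contributions, namely $\Tor^T_*(k,R/I^s)$ and $\Tor^T_*(k,S/J^s)$ appearing in $\ker(\Tor^T_i(k,N)\to \Tor^T_i(k,L))$, are always dominated (for depth) or majorized (for regularity) by the interior contributions listed in the theorem's formula.
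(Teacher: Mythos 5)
Your proposal takes a genuinely different route from the paper's. The paper proves Theorem~\ref{thm_depthreg} by first showing, under the ``small type'' hypothesis, that $I^rP^s=I^{r+1}P^{s-1}+I^rJ^s$ is a Betti splitting for every $r,s$ (Theorem~\ref{thm_sumsofsmalldoublysmall}), then iterating the Betti-splitting formula for projective dimension and regularity (Lemma~\ref{lem_depthreg_Bettisplit}) to get the exact formula for $P^s$ (Proposition~\ref{prop_formula_depthreg}), and finally passing to $\depth$ and $\reg$ of $T/P^s$ via Auslander--Buchsbaum. You instead package everything into a single short exact sequence $0\to T/P^s\to N\xrightarrow{\partial} L\to 0$ built from the K\"unneth products $T/(I^a+J^b)\cong R/I^a\otimes_k S/J^b$, and read the answer off the long exact sequence for $\Tor^T(k,-)$. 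The intersection identity $P^s=\bigcap_{a+b=s+1}(I^a+J^b)$ and the exactness of your sequence are correct and reduce, as you say, to a bigraded linear-algebra verification; both approaches ultimately hinge on the same input, Tor-vanishing of the inclusions $I^a\hookrightarrow I^{a-1}$ and $J^b\hookrightarrow J^{b-1}$.

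However, the step you flag as the ``main obstacle'' is precisely where the proof lives, and your proposal does not carry it out. To close it one must track the K\"unneth structure of $\Tor^T_i(k,\partial)$ for $i\ge 1$: the components $\Tor^R_p\otimes_k\Tor^S_q$ with $p,q\ge 1$ are killed by every summand of $\partial$ and therefore contribute to $\Ker$ (from $N$) and to $\Coker$ (from $L$); but the components with $p=0$ are \emph{not} killed, since $R/I^a\to R/I^{a-1}$ is the identity on $\Tor^R_0$, and instead shift the summand index $(a,b)\mapsto(a-1,b)$ bijectively, surjecting onto the $p=0$ part of $\Tor^T_i(k,L)$ with kernel exactly the corner $k\otimes_k\Tor^S_i(k,S/J^s)$, and symmetrically for $q=0$ with corner $\Tor^R_i(k,R/I^s)\otimes_k k$. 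From $\beta_i(T/P^s)=\dim\Coker(\Tor^T_{i+1}(k,\partial))+\dim\Ker(\Tor^T_i(k,\partial))$ one then obtains
\[
\projdim_T T/P^s=\max\left\{\max_{\substack{a+b=s\\a,b\ge 1}}\bigl(\projdim_R R/I^a+\projdim_S S/J^b\bigr)-1,\ \max_{\substack{a+b=s+1\\a,b\ge 1}}\bigl(\projdim_R R/I^a+\projdim_S S/J^b\bigr)\right\}
\]
and the analogous formula for $\reg_T$, the corner terms being absorbed because $\projdim_S S/J\ge 1$ and $\reg_S S/J\ge 0$; Auslander--Buchsbaum then gives the stated formula. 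So your route is sound and makes the global combinatorics of the answer transparent in one step, whereas the paper's iterated two-term splitting is more modular and is reused essentially verbatim for the linearity-defect analogue (Theorem~\ref{thm_boundld_mixedsum}); but as written your proposal observes that such a computation \emph{should} succeed without actually demonstrating that it does.
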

Theorem \ref{thm_depthreg} is surprising since normally we only expect to get hold of information about large enough powers of an ideal. Prior to our result, it was suspected that there are no general formulae for the depth and regularity of $T/P^s$ even when $R$ and $S$ are polynomial rings (see \cite[Page 820]{HTT}). Thanks to Theorem \ref{thm_depthreg}, we can also provide an upper bound for the stability index of depth of powers of $P$, and study when $P$ has a constant depth function (Section \ref{subsect_applications}). Our results generalize or strengthen previous work of H\`a--Trung--Trung \cite{HTT}, Herzog--Vladoiu \cite{HV}. We expect Theorem \ref{thm_depthreg} to hold in positive characteristics as well (but by Example \ref{ex_nonpolynomialbase} it does not hold if one of the base rings is not regular).

We do not restrict our considerations of mixed sums to the setting of polynomial base rings. To study Betti splittings of higher powers of $P$, we introduce in Section \ref{sect_idealsofsmalltype} the following notion.
\begin{defn}
We say $I$ is an ideal {\it of small type} if for all $s\ge 1$ and all $i\ge 0$, the map $\Tor^R_i(k,I^s)\to \Tor^R_i(k,I^{s-1})$ induced by the natural inclusion $I^s\to I^{s-1}$ is zero.
\end{defn}
One of the main results of Section \ref{sect_idealsofsmalltype} is Theorem \ref{thm_sumsofsmalldoublysmall}: if $I$ and $J$ are of small type then the powers of $P$ admit natural Betti splittings. While simple, Theorem \ref{thm_sumsofsmalldoublysmall} is enough to produce the formulas of Theorem \ref{thm_depthreg} since if $R$ and $S$ are polynomial rings then $I$ and $J$ are of small type if either $\chara k=0$ (Ahangari Maleki \cite[Proposition 3.5]{A}) or $I$ and $J$ are monomial ideals (Theorem \ref{thm_monomialidealsareofsmalltype}).

Another goal of this paper is to study the linearity defect of powers of mixed sums. The linearity defect, introduced by Herzog and Iyengar \cite{HIy} measures the failure of minimal free resolutions to be linear (see Section \ref{sect_background} for more details). From the theory of componentwise linear ideals initiated by Herzog and Hibi \cite{HH}, linearity defect is a natural invariant: over a polynomial ring, componentwise linear ideals are exactly ideals with linearity defect zero (see \cite[Section 3.2]{Ro}). The linearity defect, as can be expected, behaves well asymptotically. Denote by $\lind_R M$ the linearity defect of a finitely generated graded $R$-module $M$. We prove in \cite[Theorem 1.1]{NgV1b} that if $R$ is a polynomial ring, then the sequence $(\lind_R I^s)_{s\ge 1}$ is eventually constant. To understand the meaning of the limit $\lim_{s\to \infty} \lind_R I^s$,  one should naturally look for some computations of the asymptotic linearity defect. This is the original motivation of this paper which led to the discovery of Betti splittings of powers of mixed sums. The main results of Section \ref{sect_lindofpowers} yield the following computational statement.
\begin{thm}[Corollary \ref{cor_asymptote_specialcases}]
\label{thm_ldofpowers_specialcases}
Let $(R,\mm)$ and $(S,\nn)$ be polynomial rings over $k$, $(0) \neq I \subseteq \mm^2$ and $(0) \neq J \subseteq \nn^2$ homogeneous ideals. Suppose that one of the following conditions holds:
\begin{enumerate}[\quad \rm(i)]
\item $\chara k=0$ and $I$ and $J$ are non-trivial powers of some homogeneous ideals of $R$ and $S$, resp.
\item $I$ and $J$ are non-trivial powers of some monomial ideals of $R$ and $S$, resp.
\item All the powers of $I$ and $J$ are componentwise linear.
\end{enumerate}
Then for all $s\gg 0$, we have an equality
\[
\lind_T P^s=\max\left\{\lim_{i\to \infty}\lind_R I^i+\max_{j \ge 1}\lind_S J^j+1 , \max_{i \ge 1}\lind_R I^i+\lim_{j\to \infty}\lind_S J^j+1\right\}.
\]
\end{thm}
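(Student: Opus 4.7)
The strategy is to reduce to the general Betti-splitting formula for $\lind_T P^s$ developed earlier in Section \ref{sect_lindofpowers}, then to evaluate it asymptotically using the stabilization result of \cite{NgV1b}.

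The first step is to check that each of (i)--(iii) guarantees Betti splittings for every power $P^s$. In cases (i) and (ii), the ideals $I$ and $J$ are of small type: in characteristic zero this is Ahangari Maleki's \cite[Theorem 2.5(i)]{A}, and for monomial powers it is Theorem \ref{thm_monomialidealsareofsmalltype}. Hence Theorem \ref{thm_sumsofsmalldoublysmall} delivers Betti splittings of $P^s$ for all $s$. In case (iii), the componentwise linearity of all powers forces the same Tor-vanishing of the natural inclusions $I^a \hookrightarrow I^{a-1}$ and $J^b \hookrightarrow J^{b-1}$, and therefore again yields the required splittings.

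The second step invokes the principal formula of Section \ref{sect_lindofpowers}, which under the standing Betti-splitting hypothesis expresses $\lind_T P^s$ as a maximum over partitions $s=a+b$ of expressions of the shape $\lind_R I^a + \lind_S J^b + 1$, the $+1$ coming from the shift in the Tor long exact sequence of the splitting. The hypotheses $I \subseteq \mm^2$ and $J \subseteq \nn^2$ are what ensure that the $+1$ contribution is genuine: the intersection part of the Betti splitting sits in strictly higher degree and cannot be absorbed into neighboring terms.

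The third step passes to $s \gg 0$. By \cite[Theorem 1.1]{NgV1b}, the sequences $(\lind_R I^a)$ and $(\lind_S J^b)$ are eventually constant with limits $\ell_I$ and $\ell_J$; let $a_0,b_0$ be stabilization thresholds. Once $s \ge a_0+b_0$, any partition $s=a+b$ either has $a \ge a_0$, forcing $\lind_R I^a = \ell_I$, or $b \ge b_0$, forcing $\lind_S J^b = \ell_J$. Splitting the maximum accordingly gives
\begin{equation*}
\lind_T P^s \;=\; \max\Bigl\{\ell_I + \max_{1\le b \le b_0}\lind_S J^b + 1,\ \max_{1\le a \le a_0}\lind_R I^a + \ell_J + 1\Bigr\},
\end{equation*}
and since the sequences are themselves eventually constant, their maxima over $[1,a_0]$ and $[1,b_0]$ coincide with $\max_{a\ge 1}\lind_R I^a$ and $\max_{b\ge 1}\lind_S J^b$. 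The "middle" contribution $\ell_I+\ell_J+1$ is automatically dominated. This is exactly the stated formula.

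The main obstacle is pinpointing the exact shape of the Section \ref{sect_lindofpowers} formula and verifying that the $+1$ is present in each term under the quadratic containments; a secondary difficulty is ruling out that some partition with both $a$ and $b$ small beats the two stabilized regimes, which is handled precisely by the uniform bound on $\lind_R I^a$ and $\lind_S J^b$ coming from the stabilization theorem.
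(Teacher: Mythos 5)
Your argument establishes only an upper bound on $\lind_T P^s$, not the claimed equality, and the missing ingredient is the distinction in the paper between \emph{small type} and \emph{doubly small type}. You correctly observe that in cases (i) and (ii) the ideals $I$ and $J$ are of small type (Ahangari Maleki, Theorem \ref{thm_monomialidealsareofsmalltype}) and that Theorem \ref{thm_sumsofsmalldoublysmall} then yields Betti splittings for all $I^rP^s$. But a Betti splitting controls Betti numbers, hence projective dimension and regularity, exactly; for linearity defect it gives only the one-sided estimate of Proposition \ref{prop_Bettisplittings}, and that is all Theorem \ref{thm_boundld_mixedsum}(i) asserts under the small-type hypothesis. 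The equality in Theorem \ref{thm_boundld_mixedsum}(ii) requires $I$ and $J$ to be of \emph{doubly} small type, so that the relevant maps $I^{r+1}J^s\to I^rJ^s$ and $I^{r+1}J^s\to I^{r+1}P^{s-1}$ are \emph{doubly} Tor-vanishing (liftings land in $\mm^2$ times the resolution) and Lemma \ref{lem_Tor-vanishingandinvariants}(ii) becomes available. Your Step 2 silently assumes the conclusion of Theorem \ref{thm_boundld_mixedsum}(ii) while having justified only the hypotheses of Theorem \ref{thm_boundld_mixedsum}(i).

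This is also why you have misread the role of the hypotheses. In cases (i) and (ii), the requirement that $I=U^p$ and $J=V^q$ with $p,q\ge 2$ be \emph{nontrivial} powers is exactly what Proposition \ref{prop_stronglyGolodisofdoublysmalltype}(ii) exploits (through the strongly Golod/\hbox{$*$-strongly} Golod machinery) to conclude doubly small type; $p=q=1$ is excluded precisely because then one only gets small type and the equality can fail. In case (iii), the containments $I\subseteq\mm^2$, $J\subseteq\nn^2$ together with Koszulness of the polynomial rings are what Proposition \ref{prop_idealsofsmalltype}(ii) needs to promote ``all powers Koszul'' from small type to doubly small type. Your remark that $I\subseteq\mm^2$ ``ensures the $+1$ is genuine'' because ``the intersection sits in strictly higher degree'' is not an argument and does not correspond to anything in the paper's mechanism. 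Your Step 3 (the stabilization bookkeeping via \cite[Theorem 1.1]{NgV1b}) is fine once the exact formula of Theorem \ref{thm_boundld_mixedsum}(ii) is in hand, and is essentially the proof of Theorem \ref{thm_asymptoticlind}; but without doubly small type the chain of implications breaks before that point.
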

We believe that Theorem \ref{thm_ldofpowers_specialcases} is still true without the extra assumptions (i), (ii) and (iii).

The paper is organized as follows. Section \ref{sect_background} is devoted to the necessary background and useful facts. In Section \ref{sect_Torvanishing_andinvariants}, we introduce Betti splittings and the closely related notion of Tor-vanishing morphisms between modules. Our study of powers of mixed sums rests on the fact that several homological invariants behave well with respect to  Betti splittings and Tor-vanishing morphisms. In Section \ref{sect_idealsofsmalltype}, we study ideals of small type and the subclass of ideals of doubly small type. Theorem \ref{thm_ldofpowers_specialcases} is available thanks to the formula for $\lind_T P^s$ when $I$ and $J$ are of doubly small type (Theorem \ref{thm_boundld_mixedsum}(ii)). The first main result of Section \ref{sect_idealsofsmalltype} is Theorem \ref{thm_monomialidealsareofsmalltype} establishing that any proper monomial ideal of a polynomial ring is of small type. The second main result of this section is Theorem \ref{thm_sumsofsmalldoublysmall} which, among other things, establishes Betti splittings for powers of mixed sums whose summands are of small type. We study in Sections \ref{sect_depthreg} and \ref{sect_lindofpowers} the homological invariants, including projective dimension, regularity and linearity defect, of powers of mixed sums. The main focus in both sections is to provide exact formulas for these invariants. We also compute the asymptotic values of these invariants whenever possible. Some applications to the theory of ideals with constant depth functions and the index of depth stability are given at the end of Section \ref{sect_depthreg}.

{\small 
\begin{rem*}
Some materials of this paper originally belong to a preprint titled ```Linearity defects of powers are eventually constant" \cite{NgV1a}. After further thoughts, we split the last preprint into three parts. The first part establishes the asymptotic constancy of the linearity defect of powers \cite{NgV1b}. This paper is the second part, which contains Section 4 of \cite{NgV1a} but goes significantly beyond that. A third part which contains the results in the last section of \cite{NgV1a}, among other things, is the preprint \cite{NgV1c}.
\end{rem*} 
}

\section{Background}
\label{sect_background}
We begin with some basic notions and facts that will be used later. Standard knowledge of commutative algebra may be found in \cite{BH}, \cite{Eis}. For the theory of free resolutions, we refer to \cite{Avr} and \cite{P}.
\subsection{Linearity defect}
Let $(R,\mm,k)$ be a noetherian ring which is one of the following:
\begin{enumerate}
\item a local ring with the maximal ideal $\mm$ and the residue field $k=R/\mm$,
\item a standard graded algebra over a field $k$ with the graded maximal ideal $\mm$.
\end{enumerate}
We usually omit $k$ and write simply $(R,\mm)$. Let $M$ be a finitely generated $R$-module. If $R$ is a graded algebra, then the $R$-modules that we will study are assumed to be graded, and various structures concerning them, e.g.~ their minimal free resolutions are also taken in the category of graded $R$-modules and degree preserving homomorphisms. 

Let $F$ be the minimal free resolution of $M$:
\[
F: \quad	\cdots \longrightarrow F_i \longrightarrow F_{i-1} \longrightarrow \cdots \longrightarrow F_1 \longrightarrow F_0 \longrightarrow 0.
\]
For each $i\ge 0$, the minimality of $F$ gives rise to the following subcomplex of $F$:
\[
\Fc^iF:  \quad	\cdots \longrightarrow F_{i+1} \longrightarrow  F_i \longrightarrow  \mm F_{i-1}  \longrightarrow  \cdots \longrightarrow \mm^{i-j} F_j  \longrightarrow  \cdots.
\]
Following Herzog and Iyengar \cite{HIy}, we define the so-called {\em linear part} of $F$ by the formula
$$
\linp^R F:=\bigoplus_{i=0}^{\infty} \frac{\Fc^iF}{\Fc^{i+1}F}.
$$
Prior to the work of Herzog and Iyengar, the linear part was also studied by Eisenbud et. al \cite{EFS} in the graded setting. Observe that $(\linp^R F)_i=(\gr_{\mm}F_i)(-i)$ for every $i\ge 0$, where $\gr_{\mm} M=\bigoplus_{i\ge 0} \mm^iM/(\mm^{i+1}M)$. Note that $\linp^R F$ is a complex of graded modules over $\gr_{\mm}R$. The construction of $\linp^R F$ has a simple interpretation in the graded case. For each $i\ge 1$, apply the following rule to all entries in the matrix representing the map $F_i\to F_{i-1}$: keep it if it is a linear form, and replace it by $0$ otherwise. Then the resulting complex is $\linp^R F$.

Following \cite{HIy}, the linearity defect of $M$ is
\[
\lind_R M:=\sup\{i: H_i(\linp^R F)\neq 0\}.
\]
If $M\cong 0$, we set $\lind_R M=0$. 

Most results on the linearity defect of the present paper are built upon the following theorem.
\begin{thm}[\c{S}ega, {\cite[Theorem 2.2]{Se}}]
\label{thm_Sega}
Let $d\ge 0$ be an integer. The following statements are equivalent:
\begin{enumerate}[\quad \rm(i)]
\item $\lind_R M\le d$;
\item The natural morphism $\Tor^R_i(R/\mm^{q+1}, M)\longrightarrow \Tor^R_i(R/\mm^q, M)$ is zero for every $i>d$ and every $q\ge 0$.
\end{enumerate}
\end{thm}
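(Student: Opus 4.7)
The plan is to translate both conditions into concrete lifting statements about the minimal free resolution $F$ of $M$, and to establish the equivalence via short $\mm$-adic induction/refinement arguments.

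First, I would unpack both sides. Since $\Tor^R_i(R/\mm^q,M)=H_i(F/\mm^qF)$ and the natural map is induced by the surjection $F/\mm^{q+1}F\twoheadrightarrow F/\mm^qF$, condition (ii) is readily seen to be equivalent to the following lifting statement (ii$'$): for every $i>d$, every $q\ge 0$, and every $z\in F_i$ with $d(z)\in\mm^{q+1}F_{i-1}$, there exists $y\in F_{i+1}$ with $z-d(y)\in\mm^qF_i$. On the other side, decomposing $\linp^RF=\bigoplus_{\ell\ge 0}\Fc^\ell F/\Fc^{\ell+1}F$ and inspecting each summand, condition (i) is equivalent to the refined lifting statement (i$'$): for every $i>d$, every $p\ge 0$, and every $z\in\mm^pF_i$ with $d(z)\in\mm^{p+2}F_{i-1}$, there exists $y\in\mm^{p-1}F_{i+1}$ with $z-d(y)\in\mm^{p+1}F_i$ (with the convention $\mm^{-1}=R$).

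For the implication (i$'$)$\Rightarrow$(ii$'$), I would induct on $q$. The base case $q=0$ is trivial. For the inductive step, the hypothesis at $q-1$ produces a decomposition $z=d(y_1)+z_1$ with $z_1\in\mm^{q-1}F_i$; since $d(z_1)=d(z)\in\mm^{q+1}F_{i-1}=\mm^{(q-1)+2}F_{i-1}$, the element $z_1$ satisfies the hypothesis of (i$'$) with $p=q-1$, and applying it absorbs $z_1$ into $d(F_{i+1})+\mm^qF_i$.

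For the converse (ii$'$)$\Rightarrow$(i$'$), given such $z$, I would first apply (ii$'$) at homological degree $i$ with $q=p+1$ to obtain a crude lift $\tilde y\in F_{i+1}$ satisfying $d(\tilde y)\equiv z\pmod{\mm^{p+1}F_i}$, which forces $d(\tilde y)\in\mm^pF_i$. To upgrade $\tilde y$ to a witness lying inside $\mm^{p-1}F_{i+1}$, I would then apply (ii$'$) at homological degree $i+1$ (valid because $i+1>d$) with $q=p-1$, writing $\tilde y=d(u)+y'$ for some $u\in F_{i+2}$ and $y'\in\mm^{p-1}F_{i+1}$; then $d(y')=d(\tilde y)$ delivers the refinement. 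The main obstacle is precisely this refinement step in the converse direction: condition (ii) supplies no $\mm$-adic control on lifting witnesses, whereas (i$'$) demands exactly such control. The hypothesis in (ii) being uniform across all $i>d$ is what makes the argument possible, since one must invoke it at two consecutive homological degrees to sharpen the lift.
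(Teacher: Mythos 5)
The paper states \c{S}ega's theorem only as a citation from \cite{Se} and gives no proof of its own, so there is no in-paper argument to compare against; I assess the proposal on its merits. Your argument is correct. The reformulations (i$'$) and (ii$'$) as $\mm$-adic lifting statements on the minimal free resolution $F$ are both accurate: (ii$'$) is the cycle/boundary description of the vanishing of $\Tor^R_i(R/\mm^{q+1},M)\to\Tor^R_i(R/\mm^q,M)$, and (i$'$) is the vanishing of $H_i$ of each summand $\Fc^\ell F/\Fc^{\ell+1}F$ of $\linp^R F$ at position $p=\ell-i$. The induction on $q$ for (i$'$)$\Rightarrow$(ii$'$) is sound: it peels off one $\mm$-adic level at a time, with $d(z_1)=d(z)$ preserved so that (i$'$) with $p=q-1$ applies. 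In the converse direction the two-stage use of (ii$'$) — first at degree $i$ with $q=p+1$ to get a crude $\tilde y$, then (for $p\ge 1$) at degree $i+1$ with $q=p-1$ to replace $\tilde y$ by $\tilde y-d(u)\in\mm^{p-1}F_{i+1}$ without altering $d(\tilde y)$ — is exactly the refinement step required, and your observation that the uniformity of (ii) over \emph{all} $i>d$ is what allows the second application at $i+1$ is the right key point. One minor thing worth flagging: in the case $p=0$ of the converse, the second application is unnecessary (and indeed $q=p-1$ would be negative); there minimality already gives $d(\tilde y)\in\mm F_i$, so the first application alone yields $z\in\mm F_i$, consistent with your $\mm^{-1}=R$ convention.
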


If a module has linearity defect $0$, then as in \cite{HIy}, we also say that it is a {\em Koszul module}. Note that by our convention, the trivial module $(0)$ is a Koszul module. The ring $R$ is called {\it Koszul} if $\lind_R k=0$. In the graded case, $R$ is said to be a {\it Koszul algebra} if $\reg_R k=0$. These two notions are compatible in the graded case by \cite[Proposition 1.13]{HIy}. Moreover by \cite[Remark 1.10]{HIy}, a noetherian local ring $(R,\mm)$ is Koszul if and only if $\gr_{\mm}R$ is a Koszul algebra. 

\subsection{Regularity}
\label{subsect_regularity}
Let $R$ be a standard graded $k$-algebra, $M$ a finitely generated graded $R$-module. The number $\beta_{i,j}(M)=\dim_k \Tor^R_i(k,M)_j$ is called the $(i,j)$ graded Betti number of $M$ and $\beta_i(M)=\dim_k \Tor^R_i(k,M)$ its $i$th Betti number. We define the regularity of $M$ over $R$ as follows
\[
\reg_R M=\sup\{j-i: \beta_{i,j}(M) \neq  0\}.
\] 
If $\lind_R M<\infty$ then by \cite[Proposition 3.5]{AR}, there is an equality 
\[
\reg_R M= \sup_{0\le i\le \lind_R M}\{j-i: \Tor^R_i(k,M)_j\neq 0\}.
\] 
In particular, if $M$ is a Koszul module then $\reg_R M$ equals the maximal degree of a minimal homogeneous generator of $M$.

Recall that $M$ is said to have a $d$-linear resolution (where $d\in \Z$), if for all $i\in \Z$ and all $j\neq d$, it holds that $\Tor^R_i(k,M)_{i+j}=0$. If $M$ has a $d$-linear resolution, then necessarily $M$ is generated in degree $d$ and $\reg_R M=d$.

Following Herzog and Hibi \cite{HH}, $M$ is said to be {\it componentwise linear} if for each $d$, the submodule generated by homogeneous elements of degree $d$ in $M$ has a $d$-linear resolution. If $R$ is a Koszul algebra, then we have the following implications, with the first one being strict in general:
\[
\text{$M$ has a linear resolution} \Rightarrow  \text{$M$ is componentwise linear}  \Leftrightarrow \text{$M$ is Koszul}.
\]
The equivalence is due to R\"omer \cite[Theorem 3.2.8]{Ro}. Throughout, we use the term ``Koszul module" instead of ``componentwise linear module" to streamline the exposition.

The {\it Castelnuovo--Mumford regularity} of $M$ measures the degree of vanishing of its local cohomology modules: $
\reg M=\sup\{i+j: H^i_{\mm}(M)_j\neq 0\}$, where $H^i_{\mm}(M)$ denotes the $i$th local cohomology module of $M$.

\subsection{Elementary lemmas}
The identities in the next lemma extend \cite[Lemma 1.1]{HT} and \cite[Proposition 3.2]{HTT}. They are the basis of most discussions about mixed sums in this paper. Note that the proof of \cite[Proposition 3.2]{HTT} is defective since the authors assume following false claim: Let  $M_1,\ldots,M_p,N$ are submodules of an ambient module $M_0$ over a ring $R$ (where $p\ge 1$) such that $M_i\cap M_j\subseteq N$ for all $i\neq j$. Denote $M=M_1+\cdots+M_p$. Then the module $(M+N)/N$ admits a direct sum decomposition $(M+N)/N=\bigoplus_{i=1}^p (M_i+N)/N$. To see that the claim is false, consider the following example. Let $R$ be the field $k$, $M_0=kx\oplus ky$ be a $k$-vector space of rank 2, $M_1=kx$, $M_2=ky$ and $N=k(x+y)$. Then $M_1\cap M_2=0 \subseteq N$. On the other hand $M/N=(M_1+N)/N=(M_2+N)/N$ is the $k$-vector space with basis $x+N$.
\begin{lem}
\label{lem_intersect_quotient}
Let $R,S$ be affine $k$-algebras. Let $I,J$ be ideals of $R,S$, resp., and $P=I+J \subseteq T=R\otimes_k S$. Then for all $p,q,r,s\ge 0$, there are identities:
\begin{align}
I^{p+r}J^q\cap I^rJ^{s+q}&=I^{p+r}P^q\cap I^rJ^{s+q}=I^{p+r}J^{s+q}, \label{eq_intersection}\\
\frac{I^rP^s}{I^rP^{s+1}}&\cong \bigoplus_{i=0}^s \left(\frac{I^{r+i}}{I^{r+i+1}}\otimes_k \frac{J^{s-i}}{J^{s-i+1}}\right). \label{eq_decompositionofquotient}
\end{align}
\end{lem}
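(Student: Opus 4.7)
My plan is to work in $T=R\otimes_k S$ at the level of $k$-vector subspaces and to exploit the elementary fact that, for $k$-subspaces $U_1,U_2\subseteq R$ and $V_1,V_2\subseteq S$, tensor product commutes with intersection:
\[
(U_1\otimes_k V_1)\cap (U_2\otimes_k V_2)=(U_1\cap U_2)\otimes_k (V_1\cap V_2).
\]
Under the identification of the extended ideal $I^aT$ with $I^a\otimes_k S$, the product $I^aJ^b$ is simply the $k$-subspace $I^a\otimes_k J^b$ of $T$.

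For \eqref{eq_intersection}, the leftmost equality is immediate from the tensor intersection formula applied with $U_1=I^{p+r}$, $U_2=I^r$, $V_1=J^q$, $V_2=J^{s+q}$. For the middle equality, $\supseteq$ is elementary; for $\subseteq$, the key observation is that $I^{p+r}P^q\subseteq I^{p+r}T=I^{p+r}\otimes_k S$, whence
\[
I^{p+r}P^q\cap I^rJ^{s+q}\subseteq (I^{p+r}\otimes_k S)\cap (I^r\otimes_k J^{s+q})=I^{p+r}\otimes_k J^{s+q}=I^{p+r}J^{s+q}.
\]

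For \eqref{eq_decompositionofquotient}, set $M_i=I^{r+i}J^{s-i}$ and $N_i=I^{r+i+1}J^{s-i}+I^{r+i}J^{s-i+1}$; the quotient $M_i/N_i$ is naturally isomorphic to $(I^{r+i}/I^{r+i+1})\otimes_k(J^{s-i}/J^{s-i+1})$. Since $I^rP^s=\sum_i M_i$ and $I^rP^{s+1}=\sum_i N_i$, multiplication induces a well-defined surjective $k$-linear map $\psi\colon \bigoplus_{i=0}^{s}M_i/N_i\to I^rP^s/I^rP^{s+1}$. To prove injectivity, I will take $(x_i)$ with $x_i\in M_i$ and $\sum_i x_i\in I^rP^{s+1}$ and show $x_i\in N_i$ for every $i$. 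The main device is, for each fixed $i$, the $k$-algebra surjection $\pi_i\colon T\to (R/I^{r+i+1})\otimes_k(S/J^{s-i+1})$ with kernel $I^{r+i+1}T+J^{s-i+1}T$. A term-by-term inspection will show that $I^rP^{s+1}\subseteq\ker\pi_i$ and $M_j\subseteq\ker\pi_i$ for every $j\ne i$ (using $I^{r+j}\subseteq I^{r+i+1}$ when $j>i$, and $J^{s-j}\subseteq J^{s-i+1}$ when $j<i$). Consequently $x_i=\sum_j x_j-\sum_{j\ne i}x_j\in\ker\pi_i\cap M_i$.

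The crux of the argument is then to identify $M_i\cap\ker\pi_i$ with $N_i$. Since this is an intersection with a sum of two tensor subspaces, the tensor intersection formula does not apply directly; instead, restricting the surjection defining $\pi_i$ to the subspace $M_i=I^{r+i}\otimes_k J^{s-i}$ yields the natural projection $M_i\twoheadrightarrow(I^{r+i}/I^{r+i+1})\otimes_k(J^{s-i}/J^{s-i+1})$ (using that inclusions of $k$-vector spaces remain injective after $\otimes_k$), whose kernel is visibly $N_i$. With this in hand, $\psi$ is an isomorphism, completing \eqref{eq_decompositionofquotient}.
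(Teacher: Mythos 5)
Your proof is correct. The argument for the intersection identity \eqref{eq_intersection} is essentially the paper's own: both hinge on the fact that, for $k$-subspaces, $\otimes_k$ commutes with intersections, so the only difference is cosmetic (the paper treats $I\cap J=IJ$ first and then sandwiches the general case).

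Your proof of \eqref{eq_decompositionofquotient}, however, takes a genuinely different route. The paper's argument is ``internal'': it shows that the images of the summands $M_i=I^{r+i}J^{s-i}$ in $I^rP^s/I^rP^{s+1}$ are independent by establishing the ideal inclusion $M_i\cap\bigl(\sum_{j\neq i}M_j+I^rP^{s+1}\bigr)\subseteq I^rP^{s+1}$, which requires the auxiliary lemma $I_1\cap(I_2+I_3)\subseteq (I_2\cap(I_1+I_3))+(I_3\cap(I_1+I_2))$ plus a separate computation of $M_i\cap I^rP^{s+1}$. Your argument is ``external'': you build the surjection $\psi\colon\bigoplus_i M_i/N_i\twoheadrightarrow I^rP^s/I^rP^{s+1}$ and kill each obstruction to injectivity by evaluating against the $k$-algebra projection $\pi_i\colon T\to(R/I^{r+i+1})\otimes_k(S/J^{s-i+1})$, which simultaneously annihilates $I^rP^{s+1}$ and every $M_j$ with $j\neq i$, and whose restriction to $M_i$ has kernel exactly $N_i$. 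This is arguably cleaner: it avoids the modular-law juggling, and it isolates the one nontrivial ingredient --- that $\otimes_k$ over a field preserves injections and sends a tensor product of surjections $p\otimes q$ to a surjection with kernel $\ker p\otimes V+U\otimes\ker q$ --- into a single clearly-stated step. The paper's approach buys slightly greater self-containedness at the price of more delicate bookkeeping with intersections of sums of ideals, a point the authors themselves flag as the spot where an earlier published proof went wrong.
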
 
\begin{proof}
\eqref{eq_intersection} First we treat the case $p=s=1,r=q=0$. Since $k$ is a field, we have the following identities
\[
I\cap J =(I \otimes_k S)\cap (R\otimes_k J)=(I \cap R)\otimes_k (S\cap J)=I \otimes_k J=IJ.
\]
Now for arbitrary $0\le r\le p, 0\le q\le s$, we have
\[
I^{p+r}J^{s+q} \subseteq I^{p+r}J^q\cap I^rJ^{s+q} \subseteq I^{p+r}P^q\cap I^rJ^{s+q} \subseteq I^{p+r}\cap J^{s+q}=I^{p+r}J^{s+q}.
\]
The last equality in the display follows from the case treated above. Hence all the inclusions are in fact equalities.

\eqref{eq_decompositionofquotient} We have $I^rP^s=\sum_{i=0}^{s}I^{r+i}J^{s-i}$. We claim that for $0\le i \le s$, the following inclusion holds
\begin{equation}
\label{inclusion_Ir+iJs-i}
I^{r+i}J^{s-i} \bigcap \left(\sum\limits_{0\le j\le s, j\neq i}I^{r+j}J^{s-j}+I^rP^{s+1}\right) \subseteq I^rP^{s+1}.
\end{equation}

Note that $P^{s+1} \subseteq I^{i+1}+J^{s-i+1}$, so $I^rP^{s+1} \subseteq I^{r+i+1}+J^{s-i+1}$. Moreover $J^{s-j}\subseteq J^{s-i+1}$, $I^{r+t} \subseteq I^{r+i+1}$ if $j<i<t$. Hence
\begin{align}
&I^{r+i}J^{s-i} \cap \left(\sum\limits_{0\le j\le s, j\neq i}I^{r+j}J^{s-j}+I^rP^{s+1}\right) \nonumber \\
 &=I^{r+i}J^{s-i} \cap \left(\sum_{j<i}I^{r+j}J^{s-j}+\sum_{j>i}I^{r+j}J^{s-j}+I^rP^{s+1}\right) \nonumber\\
& \subseteq I^{r+i}J^{s-i} \cap \left(I^{r+i+1}+J^{s-i+1}\right). \label{ineq_intersectIr+iJs-i}
\end{align}
Generally for all ideals $I_1,I_2,I_3$ in a common ring, we have
\[
I_1\cap (I_2+I_3) \subseteq \left(I_2\cap (I_1+I_3)\right)+\left(I_3\cap (I_1+I_2)\right).
\] 
Take $I_1=I^{r+i}J^{s-i}, I_2=I^{r+i+1}, I_3=J^{s-i+1}$ in $T$. Then as $I_1+I_3 \subseteq J^{s-i}, I_1+I_2\subseteq I^{r+i}$, the last display implies the inclusion in the following chain
\begin{align}
\label{eq_inclusion_intersect}
I^{r+i}J^{s-i} \cap (I^{r+i+1}+J^{s-i+1}) &\subseteq I^{r+i+1}\cap J^{s-i}+J^{s-i+1} \cap I^{r+i} \nonumber \\
&=I^{r+i+1}J^{s-i}+I^{r+i}J^{s-i+1}.
\end{align}
The equality holds because of \eqref{eq_intersection}. Note that the last chain also yields
\begin{equation}
\label{eq_intersectwithIrPs+1}
I^{r+i}J^{s-i}\cap I^rP^{s+1}=I^{r+i+1}J^{s-i}+I^{r+i}J^{s-i+1}.
\end{equation}
Indeed, the right-hand side is trivially contained in the left-hand one. For the reverse inclusion, we only need to recall that $I^rP^{s+1} \subseteq I^{r+i+1}+J^{s-i+1}$. The chain \eqref{eq_inclusion_intersect} takes care of the rest.

The combination of \eqref{ineq_intersectIr+iJs-i} with \eqref{eq_inclusion_intersect} and \eqref{eq_intersectwithIrPs+1} yields the desired inclusion \eqref{inclusion_Ir+iJs-i}. So we have the first equality in the following chain
\begin{align*}
\frac{I^rP^s}{I^rP^{s+1}}&=\bigoplus_{i=0}^s\frac{I^{r+i}J^{s-i}+I^rP^{s+1}}{I^rP^{s+1}} \cong \bigoplus_{i=0}^s\frac{I^{r+i}J^{s-i}}{I^{r+i}J^{s-i} \cap I^rP^{s+1}}\\
 &=\bigoplus_{i=0}^s\frac{I^{r+i}J^{s-i}}{I^{r+i+1}J^{s-i}+I^{r+i}J^{s-i+1}} \cong \bigoplus_{i=0}^s \left(\frac{I^{r+i}}{I^{r+i+1}}\otimes_k \frac{J^{s-i}}{J^{s-i+1}}\right).
\end{align*}
The second equality is \eqref{eq_intersectwithIrPs+1}, the last isomorphism is standard. This finishes the proof.
\end{proof}
\begin{lem}
\label{lem_tensor}
Let $R,S$ be standard graded $k$-algebras, and $M, N$ be non-zero finitely generated graded modules over $R, S$, resp. Then denoting $T=R\otimes_k S$, there are equalities
\begin{align*}
\projdim_T (M\otimes_k N)&=\projdim_R M+\projdim_S N,\\
\reg_T (M\otimes_k N)&=\reg_R M+ \reg_S N,\\
\lind_T (M\otimes_k N)&=\lind_R M+\lind_S N.
\end{align*}
\end{lem}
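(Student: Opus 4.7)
The plan is to construct a minimal graded free resolution of $M\otimes_k N$ over $T$ as the $k$-tensor product of minimal resolutions of $M$ and of $N$, and to read off all three equalities from its structure. Concretely, I would pick minimal graded free resolutions $F$ of $M$ over $R$ and $G$ of $N$ over $S$. Because $k$ is a field, $-\otimes_k-$ is exact, so the Künneth formula gives $H_n(F\otimes_k G) = \bigoplus_{p+q=n}H_p(F)\otimes_k H_q(G)$, which equals $M\otimes_k N$ in degree $0$ and vanishes otherwise; hence $F\otimes_k G$ is a graded free resolution of $M\otimes_k N$ over $T$. Its differential $d^F\otimes 1\pm 1\otimes d^G$ has entries in $\mm\otimes_k S+R\otimes_k \nn=\mm_T$, the graded maximal ideal of $T$, so the resolution is in fact minimal. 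Consequently, the bigraded Betti numbers satisfy the Künneth identity
\[
\beta^T_{i,j}(M\otimes_k N) = \sum_{\substack{p+q=i\\ a+b=j}}\beta^R_{p,a}(M)\,\beta^S_{q,b}(N).
\]

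The projective dimension and regularity equalities then fall out by taking suprema of this identity. Since $M$ and $N$ are nonzero, the top Betti numbers $\beta^R_{\projdim_R M}(M)$ and $\beta^S_{\projdim_S N}(N)$ are nonzero, and the top homological degree of a nonzero summand in the Künneth sum is exactly their sum, giving $\projdim_T(M\otimes_k N) = \projdim_R M+\projdim_S N$. For regularity, $\reg_T(M\otimes_k N) = \sup\{(a-p)+(b-q) : \beta^R_{p,a}(M)\beta^S_{q,b}(N)\neq 0\}$, and because the two contributions can be maximized independently over their respective indices, this decouples as $\reg_R M+\reg_S N$.

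For the linearity defect I would identify $\linp^T(F\otimes_k G) \cong \linp^R F \otimes_k \linp^S G$ as complexes of graded $\gr_{\mm_T}T$-modules. Since $\mm_T = \mm\otimes_k S + R\otimes_k \nn$, a short induction on $n$ gives $\mm_T^n(F_p\otimes_k G_q) = \sum_{a+b=n}\mm^a F_p \otimes_k \nn^b G_q$, from which $\gr_{\mm_T}(F_p\otimes_k G_q) \cong \gr_\mm F_p \otimes_k \gr_\nn G_q$ as bigraded $k$-spaces. At the level of differentials, the splitting $d^{F\otimes G} = d^F\otimes 1\pm 1\otimes d^G$ transports to the Leibniz differential on $\linp^R F \otimes_k \linp^S G$ because the operation of extracting the linear part is $k$-linear on each tensor factor separately. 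Applying Künneth once more then yields
\[
H_n\bigl(\linp^T(F\otimes_k G)\bigr) = \bigoplus_{p+q=n} H_p(\linp^R F) \otimes_k H_q(\linp^S G),
\]
and since a $k$-tensor product vanishes iff one of its factors does, the supremum of $n$ with nonzero homology is $\lind_R M + \lind_S N$.

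The main obstacle I expect is precisely the identification of the linear parts in the last step: one must check both that the filtration $\Fc^\bullet$ defined in the excerpt is compatible with $-\otimes_k-$ and that the resulting associated-graded differentials agree on the nose. Both verifications ultimately reduce to the elementary decomposition of $\mm_T^n$ recalled above, together with exactness of tensoring over a field; once they are in place, the rest of the argument is formal Künneth and everything drops out cleanly.
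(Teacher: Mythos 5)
Your proposal is correct and follows the same route as the paper: tensor the minimal free resolutions $F$ and $G$ over $k$ to get a minimal free resolution of $M\otimes_k N$ over $T$, read off $\projdim$ and $\reg$ from the resulting Künneth formula for Betti numbers, and identify $\linp^T(F\otimes_k G)$ with $\linp^R F\otimes_k \linp^S G$ to conclude via Künneth on the linear parts. You spell out the verification of $\linp^T(F\otimes_k G)\cong\linp^R F\otimes_k\linp^S G$ in more detail than the paper, which simply asserts it, but the underlying argument is identical.
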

\begin{proof}
Let $F, G$ be the minimal graded free resolutions of $M,N$ over $R,S$, resp. Then $F \otimes_k G$ is a minimal graded free resolution of $M \otimes_k N$ over $T$. A simple accounting then yields the first two equalities. 

We immediately have $\linp^T (F \otimes_k G)=\linp^R F \otimes_k \linp^S G$. Hence using the K\"unneth's formula, we get $\lind_T (M\otimes_k N)=\lind_R M+\lind_S N$.
\end{proof}
Recall that a map of noetherian local rings $\theta:  (R,\mm) \longrightarrow (S,\nn)$ is called an {\it algebra retract} if there exists a local homomorphism $\phi: S\to R$ such that the composition $\phi\circ \theta$ is the identity of $R$. In such a situation, we call the map $\phi$ the {\it retraction map} of $\theta$. 
\begin{lem}
\label{lem_retract}
Let $\theta: (R,\mm)\to (S,\nn)$ be an algebra retract of noetherian local rings with the retract map $\phi: S\to R$. Let $I\subseteq \mm$ be an ideal of $R$. Let $J\subseteq \nn$ be an ideal containing $\theta(I)S$ such that $\phi(J)R=I$. Then there are inequalities
\begin{align*}
\lind_R (R/I) &\le \lind_S (S/J),\\
\lind_R I &\le \lind_S J. 
\end{align*}
\end{lem}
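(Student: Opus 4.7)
The plan is to apply \c{S}ega's criterion (Theorem \ref{thm_Sega}) twice, exploiting the retract structure to transfer vanishing of the natural Tor comparison maps from $S$ to $R$. Setting $d:=\lind_S(S/J)$, I want to show $\lind_R(R/I)\le d$, which by Theorem \ref{thm_Sega} amounts to checking that the natural morphism
\[
\alpha^R_{i,q}\colon \Tor^R_i(R/\mm^{q+1}, R/I) \longrightarrow \Tor^R_i(R/\mm^q, R/I)
\]
vanishes for all $i>d$ and $q\ge 0$. \c{S}ega's criterion applied to $S/J$ already gives vanishing of the analogous $S$-morphisms
\[
\alpha^S_{i,q}\colon \Tor^S_i(S/\nn^{q+1}, S/J) \longrightarrow \Tor^S_i(S/\nn^q, S/J)
\]
in the same range, so it suffices to exhibit $\alpha^R_{i,q}$ as a retract of $\alpha^S_{i,q}$.

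Locality of $\theta$ and $\phi$ gives $\theta(\mm^q)\subseteq\nn^q$ and $\phi(\nn^q)\subseteq\mm^q$, while the hypotheses yield $\theta(I)\subseteq \theta(I)S\subseteq J$ and $\phi(J)\subseteq \phi(J)R=I$. Consequently $\theta$ and $\phi$ induce compatible module maps
\[
R/\mm^q \rightleftarrows S/\nn^q, \qquad R/I \rightleftarrows S/J,
\]
each pair of compositions on the $R$-side being the identity because $\phi\circ\theta=\id_R$. By change-of-rings functoriality of Tor, these assemble into morphisms $\theta_*\colon \Tor^R_i(R/\mm^q, R/I) \to \Tor^S_i(S/\nn^q, S/J)$ and $\phi_*$ in the reverse direction with $\phi_*\circ\theta_*=\id$; moreover both are natural in $q$ with respect to the surjections $R/\mm^{q+1}\twoheadrightarrow R/\mm^q$ and $S/\nn^{q+1}\twoheadrightarrow S/\nn^q$. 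In the resulting commutative square one reads off $\alpha^R_{i,q} = \phi_*\circ\alpha^S_{i,q}\circ\theta_*$, so $\alpha^R_{i,q}=0$ whenever $\alpha^S_{i,q}=0$, and \c{S}ega's criterion in the other direction delivers $\lind_R(R/I)\le d$.

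The inequality $\lind_R I\le \lind_S J$ is handled identically: $\theta$ and $\phi$ restrict to $R$-linear maps $I\rightleftarrows J$ (viewing $J$ as an $R$-module via $\theta$) with $\phi|_J\circ \theta|_I=\id_I$, and the same diagram-chase goes through verbatim after replacing $R/I$ and $S/J$ by $I$ and $J$. The main step requiring care is the change-of-rings functoriality together with the identity $\phi_*\circ\theta_*=\id$: these are formal consequences of lifting the module retract to a retract of free resolutions over the retract $\theta\colon R\to S$, but they must be spelled out precisely so that the square of $\alpha$-maps really commutes as claimed and the $\phi_*\circ\theta_*=\id$ identity is inherited at the level of Tor.
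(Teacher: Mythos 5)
Your proof is correct and takes essentially the same approach as the paper: both set up the retract-induced morphisms on $\Tor$ and invoke \c{S}ega's criterion (Theorem \ref{thm_Sega}) to transfer vanishing of the comparison maps $\Tor^\bullet_i(\bullet/\mathfrak{max}^{q+1},\bullet)\to\Tor^\bullet_i(\bullet/\mathfrak{max}^q,\bullet)$ from $S$ to $R$. The only cosmetic difference is that you read off the identity $\alpha^R_{i,q}=\phi_*\circ\alpha^S_{i,q}\circ\theta_*$, whereas the paper uses the equivalent observation that $\theta_*$ is injective (being split by $\phi_*$) together with commutativity of the left square; likewise the paper dispatches $\lind_R I\le\lind_S J$ by the shift relation $\lind_R I=\max\{0,\lind_R(R/I)-1\}$ rather than rerunning the argument with $I$ and $J$.
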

\begin{proof}
The hypothesis implies that there is an induced algebra retract $R/I \xrightarrow{\theta} S/J$. For each $i\ge 0, q\ge 0$, there is a commutative diagram of $R$-modules
\[
\xymatrixcolsep{6mm}
\xymatrixrowsep{6mm}
\xymatrix{
\Tor^R_i(R/\mm^{q+1},R/I) \ar[rr]^{\iota^{q+1}} \ar[d]^{\mu^q_R} && \Tor^S_i(S/\nn^{q+1},S/J) \ar[rr] \ar[d]^{\mu^q_S} && \Tor^R_i(R/\mm^{q+1},R/I) \ar[d] \\
\Tor^R_i(R/\mm^q,R/I) \ar[rr]^{\iota^q}  && \Tor^S_i(S/\nn^q,S/J) \ar[rr]        && \Tor^R_i(R/\mm^q,R/I)                              
}
\]
By functoriality, the composition of the horizontal maps on the second row is the identity map of $\Tor^R_i(R/\mm^q,R/I)$. From this, we deduce that $\iota^q$ is injective. 
	
Take $i>\lind_S (S/J)$, then from Theorem \ref{thm_Sega}, $\mu^q_S$ is trivial for all $q\ge 0$. Since $\iota^q$ is injective, we also have $\mu^q_R$ is trivial for all $q\ge 0$. Using Theorem \ref{thm_Sega}, this implies that $\lind_R (R/I)\le \lind_R (S/J)$. The remaining inequality is immediate.
\end{proof}

\section{Maps of Tor and algebraic invariants}
\label{sect_Torvanishing_andinvariants}
This section makes the preparation for Section \ref{sect_idealsofsmalltype}. We will introduce the notion of Tor-vanishing and doubly Tor-vanishing homomorphisms. Morphisms of both type are well-suited to study the linearity defect but the latter yields more precise information. We also recall some results from \cite{Ng1}, \cite{NgV2}, which will be used frequently later. 
\subsection{Tor-vanishing morphisms}
Let $(R,\mm)$ be a noetherian local ring. Let $\phi: M\to P$ be a morphism of finitely generated $R$-modules. We say that $\phi$ is {\it Tor-vanishing} if for all $i\ge 0$, we have $\Tor^R_i(k,\phi)=0$. We say that $\phi$ is {\it doubly Tor-vanishing} if there exist (possibly non-minimal) free resolutions $F$ and $G$ of $M$ and $P$, resp., and a lifting $\vphi: F\to G$ of $\phi$ such that $\vphi(F)\subseteq \mm^2G$.

We use the same terminology for the setting of standard graded algebras over a field and graded modules. Clearly, if $\phi$ is doubly Tor-vanishing then it is Tor-vanishing, but not vice versa: For example, take $R=k[[x]]$ and let $\phi$ be the map $R\xrightarrow{\cdot x}R$. 
\begin{rem}
\label{rem_equivalenceTor-vanishing}
Let $F,G$ be the minimal free resolutions of $M, P$, resp. 

(1) The following statements are equivalent:
\begin{enumerate}
\item $\phi$ is Tor-vanishing;
\item There exists a lifting $\vphi: F\to G$ of $\phi$ such that $\vphi(F)\subseteq \mm G$;
\item For any lifting $\vphi: F\to G$ of $\phi$, we have $\vphi(F) \subseteq \mm G$.
\end{enumerate}

(2) The following statements are equivalent:
\begin{enumerate}
\item $\phi$ is doubly Tor-vanishing;
\item There exists a lifting $\vphi: F\to G$ of $\phi$ such that $\vphi(F)\subseteq \mm^2G$;
\item For any lifting $\vphi: F\to G$ of $\phi$, we have $\vphi(F) \subseteq \mm^2G$.
\end{enumerate}
\end{rem}

Compositing Tor-vanishing morphisms produce doubly Tor-vanishing ones.
\begin{lem}
\label{lem_composition_Tor-vanishing}
Let $M\xrightarrow{\phi}P$ and $P\xrightarrow{\psi}Q$ be Tor-vanishing morphisms. Then $\psi  \circ \phi$ is doubly Tor-vanishing.
\end{lem}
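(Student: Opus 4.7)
The plan is to apply the equivalent characterizations recorded in Remark \ref{rem_equivalenceTor-vanishing} and simply compose liftings to the minimal free resolutions. Let $F$, $G$, $H$ denote the minimal free resolutions of $M$, $P$, $Q$ respectively, and fix any liftings $\vphi: F \to G$ of $\phi$ and $\widetilde{\psi}: G \to H$ of $\psi$. These exist by standard homological algebra because $F$ is a complex of free modules and $G, H$ are acyclic over $P, Q$.

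Next I would invoke the equivalence $(1) \Leftrightarrow (3)$ in Remark \ref{rem_equivalenceTor-vanishing}(1) to extract the key containments from the hypothesis. Since $\phi$ is Tor-vanishing, the lifting $\vphi$ automatically satisfies $\vphi(F) \subseteq \mm G$. Likewise, since $\psi$ is Tor-vanishing, $\widetilde{\psi}(G) \subseteq \mm H$. Now consider the composition $\widetilde{\psi} \circ \vphi : F \to H$, which is a lifting of $\psi \circ \phi$. Using $R$-linearity of $\widetilde{\psi}$, we obtain
\[
(\widetilde{\psi} \circ \vphi)(F) \subseteq \widetilde{\psi}(\mm G) = \mm \cdot \widetilde{\psi}(G) \subseteq \mm \cdot \mm H = \mm^2 H.
\]
By the equivalence $(1) \Leftrightarrow (2)$ in Remark \ref{rem_equivalenceTor-vanishing}(2), this shows that $\psi \circ \phi$ is doubly Tor-vanishing, as required.

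I do not anticipate any genuine obstacle here: the entire content of the lemma is that Tor-vanishing means any lift lands in $\mm G$, so composing two such lifts lands in $\mm^2 H$, which is exactly the double Tor-vanishing condition. The only minor point to be careful about is that one must use the same lifting of $\phi$ into $G$ on both sides (the target of $\vphi$ must match the source of $\widetilde{\psi}$), but this is automatic once one fixes the minimal free resolution $G$ of $P$ once and for all.
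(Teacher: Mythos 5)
Your proof is correct and follows essentially the same route as the paper: fix liftings $\vphi$ and $\widetilde\psi$ to minimal free resolutions, use Tor-vanishing to get $\vphi(F)\subseteq\mm G$ and $\widetilde\psi(G)\subseteq\mm H$, and observe that the composite lifting lands in $\mm^2 H$. There is nothing to add; the argument matches the paper's.
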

\begin{proof}
Let $F, G, H$ be the minimal free resolutions of $M, P, Q$, resp. Since $\phi$ is Tor-vanishing, it admits a lifting $\vphi: F\to G$ such that $\vphi(F) \subseteq \mm G$. Similarly, there exists a lifting $\Psi: G\to H$ of $\psi$ such that $\Psi(G)\subseteq \mm H$. The composition $\Psi\circ \vphi: F\to H$ yields a lifting of $\psi\circ \phi$ such that $(\Psi \circ \vphi)(F) \subseteq \mm^2H$.
\end{proof}
The following result illustrates the utility of (doubly) Tor-vanishing morphisms.
\begin{lem}
\label{lem_Tor-vanishingandinvariants}
Let $0\to M \xrightarrow{\phi} P \to N \to 0$ be an exact sequence of non-zero finitely generated $R$-modules.
\begin{enumerate}[\quad \rm(i)]
\item Assume that $\phi$ is Tor-vanishing. Then 
$$
\projdim_R N=\max\{\projdim_R P, \projdim_R M+1\}.
$$
If moreover $R$ is a standard graded algebra and $M, P, N$ are graded modules, then $\reg_R N=\max\{\reg_R P, \reg_R M-1\}.$
\item Assume that $\phi$ is doubly Tor-vanishing. Then $\lind_R N=\max\{\lind_R P, \lind_R M+1\}.$
\end{enumerate}
\end{lem}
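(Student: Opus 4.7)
The core of the argument is to build a single explicit minimal free resolution of $N$ as a mapping cone and then read off each invariant directly. Let $F$ and $G$ be the minimal (graded) free resolutions of $M$ and $P$. By Remark \ref{rem_equivalenceTor-vanishing}, we may lift $\phi$ to a chain map $\vphi\colon F\to G$ whose image lies in $\mm G$ in case (i) and in $\mm^2 G$ in case (ii). The mapping cone $H=\mathrm{Cone}(\vphi)$, with $H_i=G_i\oplus F_{i-1}$ and differential
\[
\begin{pmatrix} \partial_G & \vphi \\ 0 & -\partial_F \end{pmatrix},
\]
is a free resolution of $N$ coming from the given short exact sequence; since every block has its entries in $\mm$, the complex $H$ is in fact minimal.

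For part (i), minimality of $H$ immediately yields the additivity $\beta_i(N)=\beta_i(P)+\beta_{i-1}(M)$, and in the graded setting the refined version $\beta_{i,j}(N)=\beta_{i,j}(P)+\beta_{i-1,j}(M)$. Taking the supremum of $i$ with $\beta_i(N)\neq 0$ yields $\projdim_R N=\max\{\projdim_R P,\projdim_R M+1\}$, and taking the supremum of $j-i$ with $\beta_{i,j}(N)\neq 0$ yields $\reg_R N=\max\{\reg_R P,\reg_R M-1\}$.

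For part (ii), the stronger condition $\vphi(F)\subseteq \mm^2 G$ means that every entry of the off-diagonal block $\vphi$ lies in $\mm^2$ and hence reduces to zero modulo $\mm^2$. Consequently this block contributes nothing to the associated graded complex $\linp^R H$, which therefore splits as an internal direct sum of subcomplexes whose degree-$i$ components are $(\gr_{\mm} G_i)(-i)$ and $(\gr_{\mm} F_{i-1})(-i)$. The first summand is precisely $\linp^R G$, and the second is the corresponding shift of $\linp^R F$, so taking homology gives
\[
H_i(\linp^R H) \;\cong\; H_i(\linp^R G) \oplus H_{i-1}(\linp^R F)
\]
for every $i$, whence $\lind_R N=\max\{\lind_R P,\lind_R M+1\}$.

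The only genuinely delicate step is the vanishing of the cross-term in $\linp^R H$, which is precisely the point where the doubly Tor-vanishing hypothesis becomes essential: any entry of $\vphi$ lying in $\mm\setminus\mm^2$ would produce a nonzero linear map linking the two summands, and the direct-sum decomposition of the linear part (hence the clean formula for $\lind_R N$) would break down. Everything else reduces to the standard check that the mapping cone is a minimal resolution and to elementary bookkeeping of Betti numbers.
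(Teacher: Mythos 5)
Your proof is correct, and it takes a more self-contained route than the paper's. For part (i), the paper simply invokes the long exact sequence of $\Tor$ to obtain $\beta_i(N)=\beta_i(P)+\beta_{i-1}(M)$ (and the graded version for regularity); your mapping-cone construction encodes exactly the same information, just presented at the level of resolutions rather than homology. For part (ii), the paper defers entirely to \cite[Lemma 3.5]{NgV1b}, whereas you reprove the key point from scratch: the condition $\vphi(F)\subseteq\mm^2G$ kills the off-diagonal block after passage to $\linp^R$, so the linear part of the cone decomposes as $\linp^R G\,\oplus\,(\linp^R F)[1]$ and the homology splits accordingly. Your unified treatment has the virtue of making both parts transparently consequences of a single construction (the minimal mapping cone) and of exposing precisely where the distinction between Tor-vanishing and doubly Tor-vanishing enters --- an entry in $\mm\setminus\mm^2$ would survive in the linear part and destroy the splitting. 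The paper's version is terser and avoids recapitulating the cone, at the cost of sending the reader to another source for the linearity-defect case.

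Two small points worth stating explicitly to make the writeup airtight: (1) the mapping cone of a lifting of $\phi$ is a free resolution of $N$ because the short exact sequence $0\to M\to P\to N\to 0$ is given (this is what you implicitly use when you say $H$ ``is a free resolution of $N$ coming from the given short exact sequence''); and (2) the second summand of $\linp^R H$ is the shift $(\linp^R F)[1]$ with an accompanying internal twist, but since linearity defect only records which homological degrees carry nonzero homology, the internal twist is harmless and $H_i(\linp^R H)\cong H_i(\linp^R G)\oplus H_{i-1}(\linp^R F)$ as ungraded modules already suffices.
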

\begin{proof}
(i) The long exact sequence of Tor yields $\beta_i(N)=\beta_i(P)+\beta_{i-1}(M)$ for all $i\ge 0$. Hence the equality for  the projective dimension follows. Similar arguments for the regularity. (ii) follows by using Remark \ref{rem_equivalenceTor-vanishing}(2) and \cite[Lemma 2.2]{NgV1b}.
\end{proof} 

\subsection{Betti splittings}
Betti splittings were first introduced by Francisco, H\`a, and Van Tuyl \cite{FHV} for monomial ideals. A motivation for this notion comes from work of Eliahou and Kervaire \cite{EK}. Let $(R,\mm)$ be a noetherian local ring (or a standard graded $k$-algebra) and $P,I,J\neq (0)$ be proper (homogeneous) ideals of $R$ such that $P=I+J$. 
\begin{defn}
The decomposition of $P$ as $I+J$ is called a {\it Betti splitting} if for all $i\ge 0$, the following equality of Betti numbers holds: $\beta_i(P)=\beta_i(I)+\beta_i(J)+\beta_{i-1}(I\cap J)$.
\end{defn}
The lemma below is a straightforward generalization of \cite[Proposition 2.1]{FHV} and admits the same proof. We will frequently use the characterization (ii) of Betti splittings in the next sections.
\begin{lem}
\label{lem_criterion_Bettisplit}
The following are equivalent:
\begin{enumerate}[\quad \rm(i)]
\item The decomposition $P=I+J$ is a Betti splitting;
\item The morphisms $I\cap J \to I$ and $I\cap J \to J$ are Tor-vanishing;
\item The mapping cone construction for the map $I\cap J \to I\oplus J$ yields a minimal free resolution of $P$.
\end{enumerate}
\end{lem}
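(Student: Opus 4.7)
The entire argument hinges on the short exact sequence
\[
0 \longrightarrow I \cap J \xrightarrow{\ \alpha\ } I \oplus J \xrightarrow{\ \beta\ } P \longrightarrow 0,
\]
where $\alpha(x) = (x,-x)$ and $\beta(a,b) = a+b$. Applying $- \otimes_R k$ yields a long exact sequence
\[
\cdots \to \Tor^R_i(k, I\cap J) \xrightarrow{\alpha_i} \Tor^R_i(k,I) \oplus \Tor^R_i(k,J) \to \Tor^R_i(k,P) \xrightarrow{\delta_i} \Tor^R_{i-1}(k, I\cap J) \to \cdots
\]
in which $\alpha_i$ is induced by the pair of inclusion maps $I\cap J \to I$ and $I\cap J \to J$ (with a sign on one component which is harmless for vanishing questions).

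For the equivalence (i)$\Leftrightarrow$(ii): taking $k$-dimensions in the long exact sequence, the equality $\beta_i(P) = \beta_i(I) + \beta_i(J) + \beta_{i-1}(I\cap J)$ for every $i \ge 0$ is equivalent to the exact sequence breaking into short exact pieces $0 \to \Tor^R_i(k,I)\oplus \Tor^R_i(k,J) \to \Tor^R_i(k,P) \to \Tor^R_{i-1}(k,I\cap J) \to 0$, i.e.\ to the vanishing of every $\alpha_i$. Since $\alpha_i = (\Tor^R_i(k, \iota_I), -\Tor^R_i(k, \iota_J))$ where $\iota_I, \iota_J$ denote the two inclusions, this vanishing is exactly the statement that both $\iota_I$ and $\iota_J$ are Tor-vanishing in the sense of Section~\ref{sect_Torvanishing_andinvariants}.

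For (ii)$\Leftrightarrow$(iii): let $F$ be the minimal free resolution of $I\cap J$ and $G$ the minimal free resolution of $I\oplus J$ (which is just the direct sum of the minimal resolutions of $I$ and $J$). Choose any lifting $\widetilde{\alpha}: F \to G$ of $\alpha$. The mapping cone $\mathrm{Cone}(\widetilde{\alpha})$ is always a (not necessarily minimal) free resolution of $P$. By Remark~\ref{rem_equivalenceTor-vanishing}(1), condition (ii) is equivalent to $\widetilde{\alpha}(F) \subseteq \mm G$, which in turn is exactly the minimality condition for $\mathrm{Cone}(\widetilde{\alpha})$. Conversely, if the mapping cone is minimal, ranks in each homological degree give $\beta_i(P) = \beta_i(I) + \beta_i(J) + \beta_{i-1}(I\cap J)$, recovering (i).

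The arguments are essentially formal once the short exact sequence is in place; the only point that calls for mild care is the sign in $\alpha$, which must be tracked so that ``vanishing of $\alpha_i$'' genuinely translates into the Tor-vanishing of \emph{both} individual inclusions rather than just one of them. This is really the only potential pitfall, and it follows from the fact that the two projections $\Tor^R_i(k,I)\oplus \Tor^R_i(k,J) \to \Tor^R_i(k,I)$ and $\to \Tor^R_i(k,J)$ detect each component separately.
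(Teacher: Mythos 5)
Your argument is correct and is essentially the one the paper has in mind: the paper gives no proof of its own, instead citing \cite[Proposition 2.1]{FHV} and noting that the lemma ``admits the same proof,'' and that proof is precisely the long exact sequence of $\Tor$ arising from $0\to I\cap J\to I\oplus J\to P\to 0$ combined with the mapping cone criterion you use. Your care over the sign in $\alpha_i$ is fine but a non-issue, since a map into a direct sum vanishes if and only if both components do, regardless of signs.
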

Most results of this paper are motivated by the next simple observation, presented in \cite[Example 4.7]{NgV2}.
\begin{ex}
\label{ex_Bettisplittings}
Let $R, S$ be standard graded $k$-algebras and $I, J$ be non-zero, proper homogeneous ideals of $R, S$, resp. Let $P=I+J \subseteq T=R\otimes_k S$, then the decomposition $P=I+J$ is a Betti splitting.
\end{ex}
The following result signifies the utility of Betti splittings. The case when $R$ is a polynomial ring is well-known \cite[Corollary 2.2]{FHV}.
\begin{lem}
\label{lem_depthreg_Bettisplit}
Let $R$ be a standard graded $k$-algebra and $P$ a homogeneous ideal with a Betti splitting $P=I+J$. Then there are equalities
\begin{align*}
\projdim_R P&=\max\{\projdim_R I, \projdim_R J, \projdim_R (I\cap J)+1\},\\
\reg_R P &= \max\{\reg_R I, \reg_R J, \reg_R (I\cap J)-1\}.
\end{align*}
\end{lem}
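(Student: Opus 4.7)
The plan is to read both formulas directly off the minimal free resolution of $P$ supplied by Lemma \ref{lem_criterion_Bettisplit}(iii). Starting from the short exact sequence
\[
0 \longrightarrow I\cap J \xrightarrow{(\iota,-\iota)} I\oplus J \longrightarrow P \longrightarrow 0,
\]
choose minimal graded free resolutions $F^{(1)}, F^{(2)}, F^{(3)}$ of $I$, $J$, $I\cap J$, respectively. Lift the two inclusions $I\cap J \hookrightarrow I$ and $I\cap J \hookrightarrow J$ to chain maps $\vphi^{(1)}: F^{(3)} \to F^{(1)}$ and $\vphi^{(2)}: F^{(3)} \to F^{(2)}$, and let $C$ be the mapping cone of $(\vphi^{(1)}, -\vphi^{(2)}): F^{(3)} \to F^{(1)} \oplus F^{(2)}$. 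By Lemma \ref{lem_criterion_Bettisplit}(i)$\Leftrightarrow$(iii), the hypothesis that $P = I + J$ is a Betti splitting guarantees that $C$ is a minimal graded free resolution of $P$. Concretely, $C_i = F^{(1)}_i \oplus F^{(2)}_i \oplus F^{(3)}_{i-1}$ for every $i \geq 0$.

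From this free resolution I can immediately extract the graded Betti numbers:
\[
\beta_{i,j}^R(P) = \beta_{i,j}^R(I) + \beta_{i,j}^R(J) + \beta_{i-1,j}^R(I\cap J) \qquad \text{for all } i, j.
\]
For projective dimension, $\projdim_R P$ is the largest $i$ such that $\beta_{i,j}^R(P) \neq 0$ for some $j$. The three summands on the right are independently nonzero for the corresponding top-degree indices $\projdim_R I$, $\projdim_R J$, and $\projdim_R(I\cap J) + 1$ (the shift by one coming from the index $i-1$ in the third summand), so the maximum formula for $\projdim_R P$ follows. For regularity, $\reg_R P$ is the supremum of $j - i$ with $\beta_{i,j}^R(P) \neq 0$; the first two summands contribute $\reg_R I$ and $\reg_R J$, while the third contributes
\[
\sup\{\, j - i : \beta_{i-1,j}^R(I\cap J) \neq 0 \,\} = \sup\{\, j - (i-1) - 1 : \beta_{i-1,j}^R(I\cap J) \neq 0 \,\} = \reg_R(I\cap J) - 1,
\]
which gives the second formula.

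There is no substantial obstacle: the only subtle point is justifying the separation of the three contributions. This is handled cleanly by the characterization (iii) in Lemma \ref{lem_criterion_Bettisplit}, which ensures that the differential of $C$ has all entries in $\mm$, so $\beta_{i,j}^R(P) = \dim_k C_i$ in each graded piece and no cancellation between the three summands occurs. Equivalently, one may invoke Lemma \ref{lem_criterion_Bettisplit}(ii): the Tor-vanishing of $I \cap J \to I$ and $I \cap J \to J$ makes the long exact sequence of $\Tor^R(k,-)$ applied to the short exact sequence above split into short exact pieces $0 \to \Tor_i^R(k, I\oplus J) \to \Tor_i^R(k, P) \to \Tor_{i-1}^R(k, I\cap J) \to 0$ for all $i$, again yielding the graded Betti identity and hence the stated formulas.
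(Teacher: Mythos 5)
Your argument is essentially the paper's: both derive the Betti number identity $\beta_{i,j}(P)=\beta_{i,j}(I)+\beta_{i,j}(J)+\beta_{i-1,j}(I\cap J)$ (you via the minimal mapping cone, the paper via the long exact sequence of $\Tor^R(k,-)$ — your closing paragraph notes the two are equivalent) and then read off projective dimension and regularity. The only thing you omit is the edge case $I\cap J=(0)$: your sentence ``the three summands are independently nonzero for the corresponding top-degree indices'' tacitly assumes $I\cap J\neq(0)$, and the formula $\max\{\projdim_R I,\projdim_R J,\projdim_R(I\cap J)+1\}$ could in principle fail there (e.g.\ if one sets $\projdim_R(0)=0$, the claimed maximum would be $1$ when both $I$ and $J$ are free, while $\projdim_R P=0$). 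The paper closes this hole by observing that $I\cap J=(0)$ together with $\projdim_R I=\projdim_R J=0$ is impossible — free ideals contain nonzerodivisors whose product lies in $I\cap J$ — and that if $\max\{\projdim_R I,\projdim_R J\}\ge 1$ the formula still holds. Either add that case analysis, or state up front the convention $\projdim_R(0)=\reg_R(0)=-\infty$, under which your argument is already complete.
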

\begin{proof}
Looking at the long exact sequence of $\Tor^R(k,-)$, we see that the second equality is always true and the first one is true if $I\cap J\neq (0)$. If $I\cap J=(0)$, then $P=I\oplus J$ so  $\projdim_R P=\max\{\projdim_R I, \projdim_R J\}$. If $\max\{\projdim_R I, \projdim_R J\}\ge 1$ then the first equality is again true. If $\projdim_R I=\projdim_R J=0$ then $I$ and $J$ are free $R$-modules. Hence there are non-zero divisors $x\in I, y\in J$. But then $0\neq xy\in I\cap J$, a contradiction. This finishes the proof of the first equality. 
\end{proof}
\subsection{Exact sequence estimates}
Let $(R,\mm)$ be a noetherian local ring and $0\longrightarrow M\xlongrightarrow{\phi} P \xlongrightarrow{\lambda} N\longrightarrow 0$ be an exact sequence of finitely generated $R$-modules. From \c{S}ega's Theorem \ref{thm_Sega}, it follows that the vanishing of $\Tor^R_{\pnt}(k,\phi), \Tor^R_{\pnt}(k,\lambda)$ and the connecting map $\Tor^R_{\pnt+1}(k,N)\to \Tor^R_{\pnt}(k,M)$ are useful for comparing the numbers $\lind_R M, \lind_R P$ and $\lind_R N$. For example, we have
\begin{prop}[{\cite[Proposition 4.3]{NgV2}}]
\label{prop_zeroinducedmapofTor}
With the notations as above, assume that $\phi$ is Tor-vanishing. Then there are inequalities
\begin{align*}
\lind_R N &\le \max\{\lind_R P, \lind_R M+1\},\\
\lind_R P &\le \max\{\lind_R M, \lind_R N\},\\
\lind_R M &\le \max\{\lind_R P, \lind_R N-1\}.
\end{align*}
\end{prop}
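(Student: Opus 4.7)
The plan is to translate each inequality, via \c{S}ega's criterion (Theorem~\ref{thm_Sega}), into a vanishing statement for the canonical maps $\mu^q_{A,i}\colon\Tor^R_i(R/\mm^{q+1},A)\to\Tor^R_i(R/\mm^q,A)$ in a suitable range of $i$, and then to deduce this vanishing by chasing the commutative ladder of long exact $\Tor$-sequences induced by $0\to M\xrightarrow{\phi}P\xrightarrow{\lambda}N\to 0$ together with the surjections $R/\mm^{q+1}\twoheadrightarrow R/\mm^q$.

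First I would extract the structural content of the hypothesis via Remark~\ref{rem_equivalenceTor-vanishing}(1): choose a lifting $\vphi\colon F\to G$ of $\phi$ between the minimal free resolutions of $M$ and $P$ with $\vphi(F)\subseteq\mm G$. This yields two usable facts. The mapping cone $H:=C(\vphi)$ is itself a minimal free resolution of $N$, which provides explicit cocycle representatives of elements of $\Tor^R_i(R/\mm^q,N)$ whenever I need to produce a preimage. Secondly, the induced map $\Tor^R_i(R/\mm^q,\phi)$ carries its image into the subspace coming from $\mm G\otimes R/\mm^q$, which is the extra room needed to cancel error terms in the chase.

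For each inequality, fix $q\ge 0$ and $i$ strictly above the stated bound; \c{S}ega's criterion then forces two of the three vertical maps among $\mu^q_{M,\ast},\mu^q_{P,\ast},\mu^q_{N,\ast}$ to vanish in the relevant degrees, and the goal is to deduce vanishing of the third. The naive chase yields vanishing only up to an error term in $\Img\phi_\ast$ or $\Img\lambda_\ast$; the input $\vphi(F)\subseteq\mm G$ kills this error, either by supplying a chain-level lift via $H$ or by pushing the error into an already-vanishing subspace. For example, for inequality (i), starting with $\alpha\in\Tor^R_i(R/\mm^{q+1},N)$, commutativity together with the vanishing of $\mu^q_{M,i-1}$ places $\mu^q_N(\alpha)\in\Img\lambda_\ast$, and the mapping cone lets one lift a preimage back to $\Tor^R_i(R/\mm^{q+1},P)$, on which $\mu^q_{P,i}$ vanishes, forcing $\mu^q_N(\alpha)=0$. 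Inequalities (ii) and (iii) are handled by symmetric chases, each using the Tor-vanishing of $\phi$ once to eliminate the analogous error term.

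The main obstacle I anticipate is precisely this preimage-lifting step. While producing vanishing ``up to an error term'' is a routine diagram chase, the clean cancellation of the error seems to require working at the chain level with the mapping cone $C(\vphi)$ and checking explicitly that the relevant cycles can be modified to lie in $\mm G$ before being tensored with $R/\mm^{q+1}$; this is where the Tor-vanishing hypothesis, rather than a weaker asymptotic statement, is decisive.
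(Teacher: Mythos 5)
The paper does not prove this proposition internally; it is quoted from \cite[Proposition 4.3]{NgV2}. That said, your route --- \c{S}ega's criterion (Theorem~\ref{thm_Sega}) combined with a chain-level argument through the minimal mapping-cone resolution $C(\vphi)$ of $N$, using $\vphi(F)\subseteq\mm G$ to absorb error terms --- is correct and is exactly the natural one for these authors' technique, so this is essentially the expected proof.

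Two small imprecisions worth flagging, neither fatal. First, your description of the chase for~(i) (``lift a preimage back to $\Tor^R_i(R/\mm^{q+1},P)$'') glosses over the real step: one modifies the representative $(g,f)$ of $\alpha$ in $H_i\otimes R/\mm^{q+1}$ by a boundary so that the $F$-component lands in $\mm^q F_{i-1}$ (possible because $\mu^q_{M,i-1}=0$); then $\vphi(\mm^qF)\subseteq\mm^{q+1}G$ makes the $G$-component a cycle mod $\mm^{q+1}$, producing the sought class in $\Tor_i(R/\mm^{q+1},P)$ --- this is the content you rightly anticipate in your final paragraph. Second, (ii) and (iii) are not literally symmetric to (i): in~(ii) the obstruction is killed by writing the $F$-component $f''$ as $m'+d_F(f''')$ with $m'\in\mm^{q-1}F_i$ (using $\mu^{q-1}_{M,i}=0$, with $q=0$ handled trivially) so that $\vphi(m')\in\mm^q G_i$, while in~(iii) one first uses $\vphi(F)\subseteq\mm G$ and $\vphi(\mm^{q+1}F)\subseteq\mm^{q+2}G$ to lift $\phi_*(\beta)$ one level \emph{up} to $\Tor_i(R/\mm^{q+2},P)$, where $\mu^{q+1}_{P,i}=0$ applies; also in~(iii) the ``error'' lies in $\Img\partial$ rather than $\Img\phi_*$ or $\Img\lambda_*$. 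These nuances would need to be spelled out in a full write-up, but your identification of the key ingredients and of the genuine obstacle is accurate.
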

Using Lemma \ref{lem_criterion_Bettisplit} and Proposition \ref{prop_zeroinducedmapofTor} for the short exact sequence $0\longrightarrow I\cap J \longrightarrow I\oplus J \longrightarrow P \longrightarrow 0$, we get
\begin{prop}[{\cite[Theorem 4.9]{NgV2}}]
\label{prop_Bettisplittings}
Let $P=I+J$ be a Betti splitting of non-zero proper ideals of $R$. Then there are inequalities
\begin{align*}
\lind_R P &\le \max\{\lind_R I, \lind_R J, \lind_R (I\cap J)+1\},\\
\max\{\lind_R I, \lind_R J\} &\le \max\{\lind_R (I\cap J), \lind_R P\},\\
\lind_R (I\cap J) &\le \max\{\lind_R I, \lind_R J, \lind_R P-1\}.
\end{align*}
\end{prop}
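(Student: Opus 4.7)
The plan is to exploit the short exact sequence
\[
0\longrightarrow I\cap J \xlongrightarrow{\phi} I\oplus J \longrightarrow P \longrightarrow 0,
\]
where $\phi$ sends $x\in I\cap J$ to $(x,-x)$, and then to apply Proposition \ref{prop_zeroinducedmapofTor} to this sequence. For this to work I first need to verify that $\phi$ is Tor-vanishing. By Lemma \ref{lem_criterion_Bettisplit}, the hypothesis that $P=I+J$ is a Betti splitting says exactly that each of the inclusions $I\cap J \hookrightarrow I$ and $I\cap J \hookrightarrow J$ is Tor-vanishing. Since $\Tor^R_i(k,-)$ commutes with finite direct sums, the map $\Tor^R_i(k,\phi)$ is (up to sign) the direct sum of these two induced maps, hence zero. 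So $\phi$ is Tor-vanishing as required.

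Once this is established, Proposition \ref{prop_zeroinducedmapofTor} applied to the displayed sequence gives the three inequalities
\begin{align*}
\lind_R P &\le \max\{\lind_R (I\oplus J),\ \lind_R (I\cap J)+1\},\\
\lind_R (I\oplus J) &\le \max\{\lind_R (I\cap J),\ \lind_R P\},\\
\lind_R (I\cap J) &\le \max\{\lind_R (I\oplus J),\ \lind_R P-1\}.
\end{align*}

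It remains only to identify $\lind_R(I\oplus J)$ with $\max\{\lind_R I,\lind_R J\}$. If $F$ and $G$ denote the minimal free resolutions of $I$ and $J$, then $F\oplus G$ is the minimal free resolution of $I\oplus J$, and the linear part construction is clearly additive on direct sums, so $\linp^R(F\oplus G)=\linp^R F \oplus \linp^R G$. Taking homology and using additivity of homology gives the desired equality. Substituting this identification into the three inequalities yields precisely the conclusions of the proposition.

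The main conceptual point — and the only step requiring care — is the Tor-vanishing of the diagonal inclusion $\phi$; the rest is bookkeeping. Note also that the middle inequality is non-trivial (it is not simply a statement like $\lind_R I \le \lind_R(I\oplus J)$ plus something), because it bounds the \emph{maximum} of $\lind_R I$ and $\lind_R J$ from above by a quantity involving $P$ and $I\cap J$; this is exactly what the second inequality of Proposition \ref{prop_zeroinducedmapofTor} delivers after the direct-sum identification.
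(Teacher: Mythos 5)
Your argument is correct and is exactly the one the paper uses: it applies Proposition \ref{prop_zeroinducedmapofTor} to the short exact sequence $0\to I\cap J\to I\oplus J\to P\to 0$, after observing via Lemma \ref{lem_criterion_Bettisplit} that the Betti-splitting hypothesis makes the map $I\cap J\to I\oplus J$ Tor-vanishing, and then identifies $\lind_R(I\oplus J)=\max\{\lind_R I,\lind_R J\}$. No gaps; the details you supply are all sound.
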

The following result will be invoked several times. Note that, compared with the original statements in \cite{Ng1}, below we additionally allow trivial modules. No contradiction arises in doing so because of the convention that the trivial module is Koszul.
\begin{thm}[Nguyen {\cite[Theorem 3.5 and its proof]{Ng1}}]
\label{thm_m-smallext}
Let $0\longrightarrow M\xlongrightarrow{\phi} P \longrightarrow N \longrightarrow 0$ be a short exact sequence of finitely generated $R$-modules such that:
\begin{enumerate}[\quad\rm(i)]
\item $P$ is a Koszul module,
\item $M\subseteq \mm P$.
\end{enumerate}
Then $\phi$ is Tor-vanishing, and $\lind_R N-1 \le \lind_R M \le \max\{0,\lind_R N-1\}$.  Furthermore, $\lind_R N=0$ if and only if $\lind_R M=0$ and $M\cap \mm^{s+1}P=\mm^sM$ for all $s\ge 0$.
\end{thm}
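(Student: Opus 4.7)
My plan is to prove the three assertions in succession, with the Koszul hypothesis on $P$ entering through the behavior of the linear strand of its minimal free resolution.

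First I would establish that $\phi$ is Tor-vanishing. Let $F$ and $G$ denote the minimal free resolutions of $M$ and $P$, respectively. By Remark \ref{rem_equivalenceTor-vanishing}(1) it suffices to produce a lift $\vphi: F \to G$ of $\phi$ with $\vphi(F) \subseteq \mm G$. The hypothesis $M \subseteq \mm P$ directly furnishes $\vphi_0$ with image in $\mm G_0$, and the rest is inductive: once $\vphi_t(F_t) \subseteq \mm G_t$ for all $t \le i$, the composite $\vphi_i \circ \partial^F_{i+1}$ takes values in $\mm G_i$ and is a cycle in $G$, so it represents a cycle in the subcomplex $\Fc^{i+1} G$ at position $i$. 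Koszulity of $P$ says $\linp^R G$ is acyclic in positive homological degrees; by a standard descent through the filtration $\{\Fc^j G\}$ this forces $\Fc^{i+1} G$ to be exact at position $i$, allowing me to lift $\vphi_i \circ \partial^F_{i+1}$ through $\partial^G_{i+1}$ to a map $\vphi_{i+1}: F_{i+1} \to G_{i+1}$ with image in $\mm G_{i+1}$.

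With Tor-vanishing in hand, the two inequalities drop out from Proposition \ref{prop_zeroinducedmapofTor} applied to $0 \to M \to P \to N \to 0$: substituting $\lind_R P = 0$ collapses the bounds to $\lind_R M \le \max\{0, \lind_R N - 1\}$ and $\lind_R N \le \lind_R M + 1$, which together give the stated sandwich.

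For the final equivalence, the "only if" direction yields $\lind_R M = 0$ at once from the upper bound, so the content is extracting the filtration identity $M \cap \mm^{s+1} P = \mm^s M$ from the Koszulity of $N$. I would read this off from \c{S}ega's Theorem \ref{thm_Sega} applied to $N$: the vanishing of $\Tor^R_i(R/\mm^{q+1}, N) \to \Tor^R_i(R/\mm^q, N)$, combined with the long exact Tor sequence of $0 \to M \to P \to N \to 0$ and the snake lemma, compares $(M \cap \mm^{q+1} P)/\mm^{q+1} M$ with $(M \cap \mm^q P)/\mm^q M$ and forces the strict equality $M \cap \mm^{q+1} P = \mm^q M$. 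Conversely, given $\lind_R M = 0$ and the filtration identity, I would verify \c{S}ega's criterion for $N$ directly: the identity makes the associated graded sequence $0 \to \gr_\mm M \to \gr_\mm P \to \gr_\mm N \to 0$ (with appropriate shift) exact, and combined with the Koszulity of $M$ and $P$ this propagates Koszulity to $N$. The main obstacle is precisely this third step—converting the filtration identity into the Tor-vanishing maps demanded by \c{S}ega—since the inductive lifting in step one is routine once one grants acyclicity of the linear part, and step two is purely formal.
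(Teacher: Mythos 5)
The paper only cites this theorem from reference \cite{Ng1} and gives no proof of its own, so I will evaluate your proposal on its merits.

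Your strategy for the Tor-vanishing claim is the right one in spirit, but the inductive lifting contains a genuine gap. With $\vphi_i(F_i) \subseteq \mm G_i$, the composite $\vphi_i \circ \partial^F_{i+1}$ lands not merely in $\mm G_i$ but in $\mm^2 G_i$, because the minimality of $F$ gives $\partial^F_{i+1}(F_{i+1}) \subseteq \mm F_i$; you do not record this extra factor of $\mm$, and it is precisely what makes the argument work. The claim that exactness of $\Fc^{i+1} G$ at position $i$ (which would follow from the standard descent) allows a lift with image in $\mm G_{i+1}$ is incorrect: the term of $\Fc^{i+1} G$ at position $i+1$ is $G_{i+1}$, not $\mm G_{i+1}$, so exactness there only produces a lift in $G_{i+1}$ --- and indeed $\Fc^{i+1} G$ is \emph{trivially} exact at position $i\ge 1$, since $\ker \partial^G_i = \partial^G_{i+1}(G_{i+1}) \subseteq \mm G_i$ automatically by minimality, without any use of the Koszul hypothesis. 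The statement you need is that a cycle lying in $\mm^2 G_i$ is the boundary of an element of $\mm G_{i+1}$, i.e.\ exactness of $\Fc^{i+2} G$ at position $i$, or more directly: writing $z = \partial^G(g)$ with $g\in G_{i+1}$, the class $\bar g \in G_{i+1}/\mm G_{i+1}$ is a degree-zero cycle of $\linp^R G$ at homological position $i+1$, and since $H_{i+1}(\linp^R G)=0$ while the differential from $(\linp^R G)_{i+2}$ raises internal degree by one, $\bar g$ cannot be a nonzero boundary, forcing $g\in \mm G_{i+1}$. Your formulation skips over both the $\mm^2$ observation and the correct level of the filtration, and as written the lift in $\mm G_{i+1}$ does not follow.

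The derivation of the two inequalities from Proposition \ref{prop_zeroinducedmapofTor} once Tor-vanishing is in place is correct and formal. For the final equivalence you give only a plan, which you yourself identify as the hard step; the ``only if'' direction via the identification $\Tor^R_1(R/\mm^q,N) \cong (M\cap \mm^q P)/\mm^q M$ and \c{S}ega's criterion is sound, but the ``if'' direction is genuinely subtle: showing that the filtration identity plus Koszulity of $M$ and $P$ kills the maps $\Tor^R_i(R/\mm^{q+1},N)\to \Tor^R_i(R/\mm^q,N)$ for all $i\ge 2$ requires more than the diagram chase you indicate, since the vanishing of the outer vertical maps in the relevant commutative ladder does not by itself force the middle one to vanish. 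Your idea of passing to the exact sequence $0\to \gr_\mm M(-1)\to \gr_\mm P \to \gr_\mm N\to 0$ is a reasonable starting point, but the proposal stops short of closing this argument.
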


\subsection{Invariants of mixed sums}
Let $(R,\mm)$ and $(S,\nn)$ be standard graded $k$-algebras, $I\subseteq \mm$ and $J \subseteq \nn$ be non-zero homogeneous ideals of $R$ and $S$, resp. In the sequel, modules over $R$ or $S$ are identified with their extensions to $T$ (via the obvious faithfully flat maps). For simplicity, we will call $P=I+J$ the {\em mixed sum} of $I$ and $J$. More generally, if $R_1,\ldots,R_c$ are standard graded $k$-algebras (where $c\ge 2$) and $I_i\subseteq R_i$ is a homogeneous ideal for $1\le i\le c$, we call $I_1+\cdots+I_c \subseteq R_1\otimes_k \cdots \otimes_k R_c$ the mixed sum of $I_1,\ldots,I_c$.

Part (i) of the following result is folkloric.
\begin{prop}
\label{prop_invariants:mixedsum}
The following statements hold. 
\begin{enumerate}[\quad \rm(i)]
\item There are equalities
\begin{align*}
\projdim_T P &=\projdim_R I+\projdim_S J+1,\\
\reg_T P     &=\reg_R I+\reg_S J-1.
\end{align*}
\item If $R/I$ is a Koszul $R$-module then $\lind_T P=\lind_S J$. If $\lind_R (R/I)$ and $\lind_S (S/J)$ are $\ge 1$ then $\lind_T P=\lind_R I+\lind_S J+1.$
\end{enumerate}
\end{prop}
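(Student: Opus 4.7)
The plan is to read off both parts from the fact that $P = I + J$ is a Betti splitting of $T$-ideals together with the tensor product decompositions supplied by Lemmas \ref{lem_intersect_quotient} and \ref{lem_tensor}. Concretely, when $I$ and $J$ are extended to $T$ via the faithfully flat inclusions $R \hookrightarrow T$ and $S \hookrightarrow T$, they become the $T$-modules $I \otimes_k S$ and $R \otimes_k J$, while Lemma \ref{lem_intersect_quotient} (applied with $p = s = 1$, $q = r = 0$) identifies $I \cap J$ with $IJ \cong I \otimes_k J$ as $T$-modules. Using Lemma \ref{lem_tensor} and $\projdim_S S = \reg_S S = \lind_S S = 0$, this gives $\projdim_T I = \projdim_R I$, $\projdim_T J = \projdim_S J$, $\projdim_T(I \cap J) = \projdim_R I + \projdim_S J$, and analogous equalities for regularity and linearity defect.

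For part (i), Example \ref{ex_Bettisplittings} tells us $P = I + J$ is a Betti splitting, so Lemma \ref{lem_depthreg_Bettisplit} yields
\[
\projdim_T P = \max\{\projdim_R I,\, \projdim_S J,\, \projdim_R I + \projdim_S J + 1\}
\]
and similarly for $\reg_T P$. Because $\projdim_R I, \projdim_S J \ge 0$, the third term dominates, giving the asserted formula. For regularity, since $I \subseteq \mm$ and $J \subseteq \nn$ are nonzero, both $\reg_R I$ and $\reg_S J$ are $\ge 1$, and then $\reg_R I + \reg_S J - 1$ again dominates the other two terms in the maximum.

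For part (ii), I would instead work with $T/P = (R/I) \otimes_k (S/J)$, so that Lemma \ref{lem_tensor} gives $\lind_T(T/P) = \lind_R(R/I) + \lind_S(S/J)$. The inclusion $P \hookrightarrow T$ is Tor-vanishing: $\Tor^T_i(k, T) = 0$ for $i \ge 1$ as $T$ is free, and the induced map $P/\mm_T P \to T/\mm_T T = k$ is zero since $P \subseteq \mm_T$. Applying Proposition \ref{prop_zeroinducedmapofTor} to $0 \to P \to T \to T/P \to 0$ (using $\lind_T T = 0$) yields the two-sided squeeze
\[
\lind_T(T/P) - 1 \le \lind_T P \le \max\{0, \lind_T(T/P) - 1\}.
\]
Hence $\lind_T P = \lind_T(T/P) - 1$ when $\lind_T(T/P) \ge 1$, and $\lind_T P = 0$ otherwise. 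The same reasoning applied over $S$ to $0 \to J \to S \to S/J \to 0$ shows $\lind_S J = \max\{0, \lind_S(S/J) - 1\}$, and symmetrically for $I$. A short case analysis on whether $\lind_S(S/J)$ is zero or positive then converts the displayed identity into $\lind_T P = \lind_S J$ whenever $R/I$ is Koszul, and into $\lind_T P = \lind_R I + \lind_S J + 1$ whenever both $\lind_R(R/I)$ and $\lind_S(S/J)$ are $\ge 1$.

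The conceptual work is essentially done once one knows $P = I + J$ splits Betti-wise and $T/P$ factors as a $k$-tensor product, so the only real obstacle is the end-case bookkeeping: verifying that the maxima in Lemmas \ref{lem_depthreg_Bettisplit} and the inequalities in Proposition \ref{prop_zeroinducedmapofTor} actually collapse to equalities requires care when one of the invariants is zero, which is exactly why the hypothesis in (ii) bifurcates into the Koszul case and the case $\lind_R(R/I), \lind_S(S/J) \ge 1$.
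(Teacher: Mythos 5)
Your proposal is correct and follows essentially the same route as the paper: part (i) is obtained by combining the Betti splitting of $P=I+J$ (Example \ref{ex_Bettisplittings}), Lemma \ref{lem_depthreg_Bettisplit}, and Lemma \ref{lem_tensor}; part (ii) is obtained from $\lind_T(T/P)=\lind_R(R/I)+\lind_S(S/J)$ together with the standard relation between the linearity defect of an ideal and of its cyclic quotient. The paper leaves that last relation implicit (simply asserting $\lind_R I=\lind_R(R/I)-1$ when $\lind_R(R/I)\ge 1$), whereas you make it explicit via Proposition \ref{prop_zeroinducedmapofTor} applied to $0\to P\to T\to T/P\to 0$ after verifying the inclusion $P\hookrightarrow T$ is Tor-vanishing — a small clarifying addition, but not a different argument.
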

\begin{proof}
(i) We prove the equality for projective dimension; the same argument works for regularity.

By Lemma \ref{lem_tensor}, we get the second equality in the display
\begin{align*}
\projdim_T P&=\projdim_T(T/P)-1=\projdim_R(R/I)+\projdim_S(S/J)-1\\
            &=\projdim_R I+\projdim_S J+1.
\end{align*}

(ii) Again by Lemma \ref{lem_tensor}, we have $\lind_T (T/P)=\lind_R (R/I) +\lind_S (S/J)$. This implies the first part of (ii). For the second part, note that $\lind_R (R/I)\ge 1$, hence $\lind_R I=\lind_R (R/I)-1$. Similar equalities hold for $J$ and $P$, and the conclusion follows.
\end{proof}
\begin{rem}
If $R$ is a Koszul algebra, then by Subsection \ref{subsect_regularity}, $R/I$ is a Koszul module $\Longleftrightarrow$ $R/I$ has a linear resolution over $R$ $\Longleftrightarrow$ $I$ has a $1$-linear resolution over $R$.

In any case, if $I$ is not generated by linear forms then $\lind_R(R/I) \ge 1$. Hence the hypotheses of Proposition \ref{prop_invariants:mixedsum}(ii) are satisfied if $I\subseteq \mm^2, J\subseteq \nn^2$. Later on, we will have different kinds of results about $\lind_T P^s, s\ge 2$ according to whether $I$ is generated by linear forms or $I \subseteq \mm^2$ (compare for example Corollary \ref{cor_asymptote_specialcases} and Theorem \ref{thm_asymptoticlind_linear}).
\end{rem}

\section{Ideals of (doubly) small type}
\label{sect_idealsofsmalltype}
Let $(R,\mm)$ be a noetherian local ring (or a standard graded $k$-algebra), and $I$ a proper (homogeneous) ideal. We say that $I$ is {\it of small type} ({\it of doubly small type}) if for all $r\ge 1$, the natural map $I^r\to I^{r-1}$ is Tor-vanishing (doubly Tor-vanishing, resp.). Ideals which are {\it not} of small type abound: By \c{S}ega's \cite[Proposition 7.5]{Se1}, if $R$ is a non-Koszul local ring, then $\mm$ is not of small type (and vice versa). Moreover, even over some Koszul complete intersections of codimension $2$ ideals not of small type exist, e.g. consider the ideal $(x+y)$ of $k[x,y,z]/(x^2,yz)$ in Example \ref{ex_notofsmalltype_ld}.

In this section, we provide diverse classes of ideals of small or doubly small type. An important result in the current section is Theorem \ref{thm_sumsofsmalldoublysmall} relating ideals of small type and Betti splittings of powers of their mixed sums. This will be useful to studying invariants of powers of mixed sums in Sections \ref{sect_depthreg} and \ref{sect_lindofpowers}.
\subsection{Differential criteria}
In this subsection, let $R=k[x_1,\ldots,x_m]$ (where $m\ge 0$) be a polynomial ring, $\mm$ its graded maximal ideal, and $(0) \neq I \subseteq \mm$ a homogeneous ideal. We denote by $\partial(I)$ the ideal generated by elements of the form $\partial f/\partial x_i$, where $f\in I, i=1,\ldots,m$. In the proof of Proposition \ref{prop_stronglyGolodisofdoublysmalltype}, we will make use of the following criterion for detecting Tor-vanishing homomorphisms.
\begin{lem}[Ahangari Maleki {\cite[Proposition 3.5]{A}}]
\label{lem_mapsofTor}
Assume that $\chara k=0$.
\begin{enumerate}[\quad \rm (i)]
\item Let $I_1$ and $I_2$ be homogeneous ideals of $R$ such that $\partial(I_1)\subseteq I_2$. Then $I_1\subseteq \mm I_2$ and the map $I_1\to I_2$ is Tor-vanishing.
\item In particular, any homogeneous ideal $I$ of $R$ is of small type.
\end{enumerate}
\end{lem}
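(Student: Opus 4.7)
The plan is to deduce (ii) from (i), and to prove (i) in two parts. For (ii), the Leibniz rule gives $\partial(I^r)\subseteq I^{r-1}$ for all $r\ge 1$: differentiating a product $f_1\cdots f_r$ with each $f_i\in I$ produces a sum of terms each containing at least $r-1$ factors from $I$, and scalar coefficients are handled by a second application of Leibniz. Applying (i) with $I_1=I^r$ and $I_2=I^{r-1}$ then yields the Tor-vanishing of the natural inclusion $I^r\to I^{r-1}$, which is precisely the statement that $I$ is of small type.

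Inside (i), the containment $I_1\subseteq\mm I_2$ is instant from Euler's identity: any non-zero homogeneous $f\in I_1\subseteq\mm$ has positive degree $d$, so
\[
f=\tfrac{1}{d}\sum_{j=1}^m x_j(\partial f/\partial x_j)\in\mm\cdot\partial(I_1)\subseteq\mm I_2,
\]
using $\chara k=0$ to invert $d$. For the Tor-vanishing of the inclusion $\phi:I_1\hookrightarrow I_2$, I would compute $\Tor^R(k,-)$ via the Koszul complex $K=K(x_1,\ldots,x_m;R)$---the minimal graded resolution of $k$ over $R$---so that $\Tor^R_i(k,M)=H_i(K\otimes_R M)$ and $\phi$ induces a chain map $\tilde\phi:K\otimes I_1\to K\otimes I_2$. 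I would then construct an explicit null-homotopy $h$ of degree $+1$, defined on homogeneous elements $\omega\otimes f$ with $\omega\in K_p$ by
\[
h(\omega\otimes f)=\frac{1}{p+\deg f}\sum_{j=1}^{m}(e_j\wedge\omega)\otimes(\partial f/\partial x_j);
\]
this is well defined because $p+\deg f>0$ is invertible in $k$, and $\partial f/\partial x_j\in\partial(I_1)\subseteq I_2$.

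The heart of the argument is the identity $dh+hd=\tilde\phi$. Writing the Koszul differential as $d=\sum_j x_j\iota_j$ with interior products $\iota_j$ and introducing the transverse operator $d^*=\sum_j(e_j\wedge)\partial_j$ (so that $h$ is $\tfrac{1}{p+\deg f}$ times $d^*$ on bidegree-$(p,\deg f)$ pieces), the commutation relations $[\iota_j,e_k\wedge]=\delta_{jk}$ and $[\partial_j,x_k]=\delta_{jk}$ collapse the anti-commutator to
\[
\{d,d^*\}=\sum_j x_j\partial_j+\sum_j(e_j\wedge)\iota_j.
\]
On a bidegree-$(p,\deg f)$ element the first sum acts by Euler's theorem as multiplication by $\deg f$, and the second by the exterior Euler relation $\sum_j(e_j\wedge)\iota_j=p\cdot\id$ on $K_p$; hence $\{d,d^*\}$ acts as the scalar $p+\deg f$ composed with $\tilde\phi$, and dividing by $p+\deg f$ gives $dh+hd=\tilde\phi$. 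Thus $\tilde\phi$ is null-homotopic, its induced map on each $H_i$ is zero, and $\phi$ is Tor-vanishing. The main obstacle is the sign and bidegree bookkeeping in the anti-commutator calculation; once that is organized the proof reduces to two instances of Euler's theorem.
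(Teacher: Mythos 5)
Your argument is correct. Note first that the paper itself does not prove the Tor-vanishing assertion: it cites Ahangari Maleki's Theorem 2.5 for that, and the only piece it supplies is the one-line Euler's-identity observation $I_1\subseteq\mm I_2$ (explicitly flagged as ``the only new statement''). You, by contrast, reconstruct a self-contained proof of the cited result. Your null-homotopy $h=\frac{1}{p+\deg f}\,d^*$ with $d^*=\sum_j(e_j\wedge)\,\partial_j$ is the standard Euler-operator trick on the Koszul complex, and the anticommutator computation is right: using $\iota_k(e_j\wedge\omega)=\delta_{jk}\omega-e_j\wedge\iota_k\omega$ and $\partial_j(x_k f)=\delta_{jk}f+x_k\partial_j f$, the cross terms $\sum_{j,k}(e_j\wedge\iota_k\omega)\otimes x_k\partial_j f$ cancel, leaving exactly $\sum_j\omega\otimes x_j\partial_j f+\sum_j(e_j\wedge\iota_j\omega)\otimes f=(\deg f+p)\,\omega\otimes f$. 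Since $h$ needs only to be $k$-linear for the homotopy identity to kill the induced map on $\Tor$, there is no harm in the fact that $h$ is not $R$-linear, and $p+\deg f>0$ because $I_1\subseteq\mm$. Two small points worth making explicit if you were to write this out carefully: (a) you need $\partial(I_1)\subseteq I_2$ both to ensure $h$ lands in $K\otimes I_2$ and to ensure $h\circ d$ does, since $d$ multiplies the module entry by variables and $\partial_j(x_kf)$ must again lie in $I_2$; (b) for part (ii), one should observe that an arbitrary element of $I^r$ is an $R$-linear combination $\sum a_\alpha f_{\alpha,1}\cdots f_{\alpha,r}$ and the Leibniz rule puts $\partial_j$ of each summand into $I^{r-1}$ (the term where $\partial_j$ hits the coefficient $a_\alpha$ still has all $r$ factors from $I$). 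You note both, so the argument goes through. In short: correct proof, and it does more work than the paper, which delegates the heart of the matter to the literature.
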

The only new statement in Lemma \ref{lem_mapsofTor} is the inclusion $I_1 \subseteq \mm I_2$, which results from Euler's identity for homogeneous polynomials. 

We wish to prove that every proper monomial ideal of a polynomial ring is of small type. For this, we record the following simple but very useful lemma. It is an application of the Taylor resolution \cite{Ta}; see \cite[Section 7.1]{HH2}. Below, for a set $G'$ of polynomials in $R$, denote $\lcm G'=\lcm (x: x\in G')$. If $I$ is a monomial ideal, let $\Gc(I)$ denote the set of its minimal monomial generators.
\begin{lem}[Eliahou and Kervaire {\cite[Proof of Proposition 3.1]{EK}}]
\label{lem_mapsofgens_LCM}
Let $(0) \neq I_1 \subseteq I_2$ be monomial ideals of $R$. Assume that there exists a function $\phi: \Gc(I_1) \to \Gc(I_2)$ with the following property:
\begin{enumerate}
 \item[\textup{(LCM)}] For any non-empty subset $G'$ of $\Gc(I_1)$, $\lcm G'$ belongs to $(\lcm \phi(G'))\mm$.
\end{enumerate}
Then the inclusion map $I_1\to I_2$ is Tor-vanishing.
\end{lem}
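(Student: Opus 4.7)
The plan is to construct an explicit, non-minimal chain-level lift of the inclusion $\iota\colon I_1\hookrightarrow I_2$ between Taylor resolutions whose image lies in $\mm$ times the target, and then invoke the fact that $\Tor^R_i(k,-)$ and the maps induced on it can be computed from any free resolution and any chain-level lift. For each $j\in\{1,2\}$, I would fix a total order on $\Gc(I_j)$ and let $T(I_j)$ be Taylor's resolution of $R/I_j$, with basis $\{e_\sigma\}$ indexed by subsets $\sigma\subseteq \Gc(I_j)$ in homological degree $|\sigma|$ and the usual LCM-style Taylor differential. Deleting the rank-one summand $Re_\emptyset$ produces a (non-minimal) free resolution $F_\bullet(I_j)$ of $I_j$.

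I would then define $\Psi\colon T(I_1)\to T(I_2)$ by
\[
\Psi(e_\sigma)=\begin{cases}\veps(\sigma)\,\dfrac{\lcm\sigma}{\lcm\phi(\sigma)}\,e_{\phi(\sigma)} & \text{if } \phi|_\sigma \text{ is injective},\\ 0 & \text{otherwise},\end{cases}
\]
where $\veps(\sigma)\in\{\pm 1\}$ is the sign of the permutation rearranging the tuple $(\phi(u))_{u\in\sigma}$ into the fixed order on $\Gc(I_2)$. Condition (LCM) guarantees $\lcm\phi(\sigma)\mid \lcm\sigma$, so the ratio is a genuine monomial in $R$; crucially, it lies in $\mm$ whenever $\sigma\neq\emptyset$ (and it equals $1$ when $\sigma=\emptyset$). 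Since $\Psi_0=\id_R$, the map $\Psi$ descends to the canonical surjection $R/I_1\twoheadrightarrow R/I_2$ on $H_0$, and its restriction to positive homological degree yields a lift $\widetilde\Psi\colon F_\bullet(I_1)\to F_\bullet(I_2)$ of $\iota$ satisfying $\widetilde\Psi(F_\bullet(I_1))\subseteq \mm\cdot F_\bullet(I_2)$.

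It remains to verify that $\Psi$ is a chain map; this is a direct term-by-term calculation. For each $u\in\sigma$ on which $\phi|_\sigma$ remains injective after removing $u$, both $d_{T(I_2)}\Psi(e_\sigma)$ and $\Psi d_{T(I_1)}(e_\sigma)$ contribute the coefficient $\lcm\sigma/\lcm\phi(\sigma\setminus\{u\})$ in front of $e_{\phi(\sigma)\setminus\{\phi(u)\}}$, and the Koszul signs match after a routine permutation check. The contributions indexed by pairs $u\neq u'\in\sigma$ with $\phi(u)=\phi(u')$ cancel in the usual simplicial fashion. Once $\Psi$ is known to be a chain map, tensoring $\widetilde\Psi$ with $k$ yields the zero map, whence $\Tor^R_i(k,\iota)=0$ for every $i\ge 0$, as required.

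The main technical nuisance is the sign bookkeeping in checking that $\Psi$ commutes with differentials, and in particular the degenerate-simplex cancellation on subsets where $\phi$ collapses two generators; this is the familiar functoriality mechanism underlying Taylor-type simplicial resolutions and introduces no essential difficulty beyond what already appears in \cite{EK}.
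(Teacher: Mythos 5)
Your proposal is correct and is essentially the argument the paper cites: the paper gives no proof of this lemma, deferring to Eliahou--Kervaire's Taylor-resolution argument, and the chain map $\Psi(e_\sigma)=\pm(\lcm\sigma/\lcm\phi(\sigma))\,e_{\phi(\sigma)}$ you construct, with the (LCM) hypothesis forcing each coefficient into $\mm$ for $\sigma\neq\emptyset$, is precisely their chain-level lift. The sign identity $\veps(\sigma)(-1)^{\mathrm{pos}(\phi(u),\phi(\sigma))}=(-1)^{\mathrm{pos}(u,\sigma)}\veps(\sigma\setminus\{u\})$ and the pairwise cancellation on subsets with exactly one $\phi$-collision both check out, and since $\widetilde\Psi\otimes_R k=0$ as a chain map, $\Tor^R_i(k,\iota)=0$ for all $i$ regardless of the non-minimality of the Taylor resolutions.
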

For $I$ being a monomial ideal of $R$, we denote by $\partial^*(I)$ the ideal generated by elements of the form $f/x_i$, where $f$ is a minimal monomial generator of $I$ and $x_i$ is a variable dividing $f$. The following lemma is straightforward so we leave the detailed proof to the interested reader. 
\begin{lem}
\label{lem_property_partial*}
Let $I_1, I_2, L$ be monomial ideals of $R$ where $L\subseteq I_1$. Then the following statements hold:
\begin{enumerate}[\quad \rm(i)]
\item $\partial^*(I_1)$ is a monomial ideal and $\partial(I_1) \subseteq \partial^*(I_1)$. 
\item $I_1 \subseteq \mm \partial^*(I_1)$.
\item Let $g\ge 1$ be maximal such that there exists a variable $x_i$ of $R$ with the property that $x_i^g$ divides an element of $\Gc(I_1)$. If $\chara k=0$ or $\chara k>g$ then $\partial^*(I_1)=\partial(I_1)$. In particular, if $I_1$ is squarefree then $\partial^*(I_1)=\partial(I_1)$.
\item $\partial^*(L) \subseteq \partial^*(I_1)$.
\item $\partial^*(I_1I_2)=\partial^*(I_1)I_2+I_1\partial^*(I_2)$.
\item $\partial^*(I^s)=\partial^*(I)I^{s-1}$ for all $s\ge 1$.
\end{enumerate}
\end{lem}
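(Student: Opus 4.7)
The plan is to prove each item in the stated order, the unifying observation being that $\partial^*(I)$ admits a more symmetric description:
\[
\partial^*(I) = \bigl(f/x_i : f \in I \text{ a monomial},\ x_i \text{ a variable with } x_i \mid f\bigr).
\]
This equivalent description follows immediately from the fact that any monomial in $I$ is a multiple of some element of $\Gc(I)$: if $f = hg$ with $g \in \Gc(I)$, then $x_i \mid f$ forces $x_i \mid h$ or $x_i \mid g$, and in either case $f/x_i$ lies in the original generating set (times $R$). Once this reformulation is available, every remaining item becomes a short exponent bookkeeping argument.

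For (ii), pick $g \in \Gc(I_1)$ and any variable $x_i$ dividing $g$ (one exists because $I_1 \subseteq \mm$); then $g = x_i \cdot (g/x_i) \in \mm \partial^*(I_1)$. Item (i) is the observation that $\partial^*(I_1)$ is monomially generated, together with the product rule $\partial(ag)/\partial x_i = (\partial a/\partial x_i) g + a (\partial g/\partial x_i)$ for $a \in R$ and $g \in \Gc(I_1)$: the first summand lies in $R g \subseteq I_1 \subseteq \partial^*(I_1)$ by (ii), and if $x_i^e$ is the exact power of $x_i$ dividing $g$, the second is $ea \cdot (g/x_i) \in \partial^*(I_1)$. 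The same computation yields (iii): under the hypothesis $\chara k = 0$ or $\chara k > g$, the integer $e$ (which satisfies $1 \le e \le g$) is a unit in $k$ whenever $x_i \mid g$, so $g/x_i = e^{-1} \partial g/\partial x_i \in \partial(I_1)$, giving $\partial^*(I_1) \subseteq \partial(I_1)$; combined with (i), equality. The squarefree case is $g = 1$, which is covered by every characteristic.

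Items (iv) and (v) are where the reformulation is put to use. For (iv), a monomial $f \in L \subseteq I_1$ is a multiple of some $g \in \Gc(I_1)$, say $f = hg$; if $x_i \mid f$ then either $x_i \mid g$, so $f/x_i = h(g/x_i) \in \partial^*(I_1)$, or $x_i \mid h$, so $f/x_i = (h/x_i)g \in I_1 \subseteq \partial^*(I_1)$ by (ii). Item (v) follows the same pattern applied to the generators $gh$ with $g \in \Gc(I_1)$, $h \in \Gc(I_2)$: $x_i \mid gh$ forces $x_i$ to divide one of the factors, placing $(gh)/x_i$ in $\partial^*(I_1) I_2$ or $I_1 \partial^*(I_2)$, and the reverse inclusion is immediate from the definition. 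Finally, (vi) is a one-line induction on $s$ using (v): for $s \ge 2$,
\[
\partial^*(I^s) = \partial^*(I) I^{s-1} + I\,\partial^*(I^{s-1}) = \partial^*(I) I^{s-1} + I \cdot \partial^*(I) I^{s-2} = \partial^*(I) I^{s-1}.
\]
There is no real obstacle; the only delicate point is the invertibility of the exponent $e$ in (iii), which is precisely what the hypothesis on $\chara k$ ensures.
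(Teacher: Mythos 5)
Your proof is correct. The paper does not give a proof of this lemma at all — the authors write that it is ``straightforward so we leave the detailed proof to the interested reader'' — so there is no argument in the paper to compare yours against. Your reformulation of $\partial^*(I)$ as the ideal generated by $f/x_i$ over \emph{all} monomials $f \in I$ with $x_i \mid f$ is a clean unifying device, and it is exactly what makes the two nontrivial inclusions (the containment $\partial^*(L)\subseteq\partial^*(I_1)$ in (iv) and the reverse inclusion $\partial^*(I_1)I_2 + I_1\partial^*(I_2)\subseteq\partial^*(I_1I_2)$ in (v)) straightforward, since in those cases the relevant monomial need not be a \emph{minimal} generator of the ambient ideal. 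One small presentational remark: the reformulation itself secretly invokes (ii) — in the case $x_i \mid h$ you write $f/x_i = (h/x_i)g$ and need $g \in \partial^*(I)$, which is precisely $I \subseteq \partial^*(I)$ from (ii) — so logically (ii) should be stated before the reformulation. Since your proof of (ii) is self-contained (it does not use the reformulation), there is no circularity, but the writeup would read more cleanly if (ii) came first. Also note that the reformulation and (ii) both implicitly assume the ideals are contained in $\mm$ (i.e., proper), so that every minimal generator is divisible by some variable; this is the paper's standing convention elsewhere, but is worth flagging when stating the reformulation.
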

We have the following criterion for maps between sets of minimal generators of monomial ideals to have the (LCM) property.
\begin{prop}
\label{prop_criterion_partial*}
Let $I_1, I_2$ be non-zero proper monomial ideals of $R$ such that $\partial^*(I_1) \subseteq I_2$. Then $I_1\subseteq \mm I_2$ and there exists a map $\phi: \Gc(I_1) \to \Gc(I_2)$ having the property \textup{(LCM)} of Lemma \ref{lem_mapsofgens_LCM}. 
\end{prop}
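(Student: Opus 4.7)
\medskip

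\textbf{Proof plan for Proposition \ref{prop_criterion_partial*}.} The inclusion $I_1 \subseteq \mm I_2$ is immediate: by Lemma \ref{lem_property_partial*}(ii), $I_1 \subseteq \mm \partial^*(I_1)$, and by hypothesis $\partial^*(I_1) \subseteq I_2$. So the content of the statement is the construction of a map $\phi:\Gc(I_1)\to \Gc(I_2)$ enjoying the (LCM) property. The guiding idea is that for each minimal generator $f$ of $I_1$, we should choose $\phi(f)$ to be a minimal generator of $I_2$ dividing $f/x$ for a specially chosen variable $x \mid f$, so that the ``missing variable'' $x$ can be detected uniformly on any subset $G' \subseteq \Gc(I_1)$ by choosing one variable that witnesses the required strict drop in $\lcm$.

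Here are the steps. First, I fix an arbitrary total order $\prec$ on the variables $x_1,\dots,x_m$. For each $f\in \Gc(I_1)$, let $x(f)$ be the $\prec$-least variable dividing $f$. Since $f/x(f) \in \partial^*(I_1) \subseteq I_2$, pick any $\phi(f) \in \Gc(I_2)$ dividing $f/x(f)$; this defines the map $\phi$. Note that automatically $\phi(f) \mid f$ and $v_{x(f)}(\phi(f)) \le v_{x(f)}(f) - 1$, where $v_y$ denotes the $y$-adic valuation of a monomial.

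Next, I verify (LCM). Let $\emptyset\ne G' \subseteq \Gc(I_1)$, set $M = \lcm G'$ and $L = \lcm \phi(G')$. Clearly $L \mid M$, so it suffices to exhibit a variable $y$ with $v_y(L) < v_y(M)$. Among the variables $\{x(f) : f \in G'\}$, let $y$ be the $\prec$-least; pick $f_0 \in G'$ with $x(f_0) = y$. I claim $v_y(\phi(f)) \le v_y(M) - 1$ for every $f \in G'$, which gives $v_y(L) < v_y(M)$. Two cases: if $y \nmid f$, then $v_y(\phi(f)) = 0 < v_y(f_0) \le v_y(M)$; if $y \mid f$, then $x(f) \preceq y$, and by $\prec$-minimality of $y$ in $\{x(f) : f \in G'\}$ we have $y \preceq x(f)$, forcing $x(f) = y$, whence $\phi(f) \mid f/y$ and $v_y(\phi(f)) \le v_y(f) - 1 \le v_y(M) - 1$.

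The only mildly delicate point—and the place I would focus attention when writing this up—is this final ``two case'' argument, because one needs to use the $\prec$-minimality of $y$ in \emph{both directions}: downward to force $x(f_0) = y$ (giving the witness $f_0$ with $v_y(f_0) \ge 1$), and upward to force $x(f) = y$ whenever $y$ divides \emph{any} other $f \in G'$ (so that $\phi(f)$ is known to divide $f/y$, not just $f$). Once this symmetry is noted, the proof runs on a single line for each case.
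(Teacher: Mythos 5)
Your proof is correct, and it takes a genuinely different route from the paper's. The paper argues by induction on $|\Gc(I_1)|$: it picks a variable $x$, writes $I_1 = xK + L$ with $L$ the part not involving $x$, applies the induction hypothesis to $L$, and defines $\phi$ on $\Gc(xK)$ by stripping off $x$ (plus possibly more) to land in $\Gc(I_2)$. Your approach is direct and non-inductive: fix a total order $\prec$ on the variables once and for all, let $x(f)$ be the $\prec$-least variable dividing $f$, define $\phi(f)$ to be any minimal generator of $I_2$ dividing $f/x(f)$ (which exists since $f/x(f)\in\partial^*(I_1)\subseteq I_2$), and then for a given $G'$ take $y$ to be the $\prec$-least element of $\{x(f):f\in G'\}$ as the witnessing variable. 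The crux is exactly the two-sided use of minimality you flag: minimality of $y$ in $\{x(f):f\in G'\}$ combined with minimality of $x(f)$ among variables dividing $f$ forces $x(f)=y$ whenever $y\mid f$, so $\phi(f)\mid f/y$, and together with a witness $f_0$ having $y\mid f_0$ this gives $v_y(\lcm\phi(G'))\le v_y(\lcm G')-1$. Both proofs are sound; yours avoids the case split ($L=0$ vs.~$L\ne 0$) and the bookkeeping of the inductive decomposition, and it makes the ``special variable'' $x(f)$ a globally defined function rather than something produced recursively, which arguably makes the (LCM) verification more transparent.
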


\begin{proof}
From Lemma \ref{lem_property_partial*}(ii), we have $I_1 \subseteq \mm\partial^*(I_1) \subseteq \mm I_2$.

For the remaining statement, we use induction on the number of minimal generators of $I_1$. For a monomial $f\in R$, denote by $\supp(f)$ the set of variables dividing $f$. If $I_1$ is a principal ideal $(f)$, then $\partial^*(I_1)=(f/x_i: x_i \in \supp(f))$. By the hypothesis, $f/x_i \in I_2$ for some $x_i\in \supp(f)$, so there exists $g\in \Gc(I_2)$ dividing $f/x_i$. We define $\phi$ by $\phi(f)=g$, then clearly $f\in g\mm$. 

Assume that $|\Gc(I_1)|\ge 2$. Let $x$ be a variable which divides a minimal monomial generator of $I_1$. We can write in a unique way $I_1=xK+L$, where $L$ is generated by the elements of $\Gc(I_1)$ which are not divisible by $x$ and $xK$ is generated by the remaining elements.

Observe that $\Gc(xK)\cap \Gc(L)=\emptyset$, and $\Gc(I_1) =\Gc(xK) \cup \Gc(L)$. Furthermore $K\subseteq \partial^*(I_1) \subseteq I_2$.

Firstly, assume that $L \neq (0)$. Since $|\Gc(L)|<|\Gc(I_1)|$ and $\partial^*(L) \subseteq \partial^*(I_1) \subseteq I_2$ thanks to Lemma \ref{lem_property_partial*}(iv), by induction hypothesis, there exists a function $\psi: \Gc(L) \to \Gc(I_2)$ which has the property (LCM).

We define $\phi: \Gc(I_1) \to \Gc(I_2)$ as follows: if $y\in \Gc(xK)$, since $K\subseteq I_2$, for a choice of monomials $f\in R, g\in \Gc(I_2)$ such that $y=xfg$, we let $\phi(y)=g$. If $y\in \Gc(L)$ then we set $\phi(y)=\psi(y)$. We verify that $\phi$ has the property (LCM).

Consider a set $G'=\{y_1,\ldots,y_r,z_1,\ldots,z_s\} \subseteq \Gc(I_1)$ where $y_i \in \Gc(xK)$ and $z_j \in \Gc(L)$. We write $y_i=xf_ig_i$ where $g_i=\phi(y_i)\in \Gc(I_2)$. If $r=0$ then $\lcm G'=\lcm \{z_1,\ldots,z_q\}$ is strictly divisible by $\lcm \phi(G')=\lcm \psi(G')$ by the choice of $\psi$. If $r\ge 1$, denote $g=\lcm (\phi(y_1),\ldots,\phi(y_r))=\lcm(g_1,\ldots,g_r)$.
Denote $b=\lcm(z_1,\ldots,z_s), b'=\lcm (\psi(z_1),\ldots,\psi(z_s))$ then $b'$ divides $b$. Now $\lcm \phi(G')=\lcm(g,b')$ and $\lcm G'$ is divisible by $\lcm(xg_1,\ldots,xg_r, z_1,\ldots,z_s)=\lcm(xg,b)=x\lcm(g,b)$, where the second equality holds since $\gcd(x,b)=1$. Since $b'$ divides $b$, we conclude that $\lcm \phi(G')$ strictly divides $\lcm G'$, as desired.

It remains to consider the case $L=0$. In this case $I_1=xK$, so $K\subseteq \partial^*(I_1) \subseteq I_2$. Define $\phi$ in the same way as above, we get the desired conclusion.
\end{proof}
An immediate consequence of Proposition \ref{prop_criterion_partial*} is the following.
\begin{thm}
\label{thm_monomialidealsareofsmalltype}
Any proper monomial ideal of $R$ is of small type.
\end{thm}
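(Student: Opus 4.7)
The plan is to reduce the theorem to the machinery already set up, namely the identity $\partial^*(I^r)=\partial^*(I)I^{r-1}$ from Lemma \ref{lem_property_partial*}(vi) combined with the (LCM)-criterion supplied by Proposition \ref{prop_criterion_partial*} and Lemma \ref{lem_mapsofgens_LCM}. So let $I$ be a non-zero proper monomial ideal of $R$ (the case $I=(0)$ is vacuous) and let $r\ge 1$; the goal is to show that the natural inclusion $I^r\to I^{r-1}$ is Tor-vanishing.

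First I would dispose of the case $r=1$ separately, since Proposition \ref{prop_criterion_partial*} requires both ideals to be \emph{proper} and $I^0=R$ is not proper. Here the map is the inclusion $I\hookrightarrow R$: on $\Tor_0^R(k,-)$ it induces $I/\mm I\to k$, which is zero because $I\subseteq \mm$, and for $i\ge 1$ the target $\Tor_i^R(k,R)$ already vanishes. Thus the map $I\to R$ is Tor-vanishing.

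Next, for $r\ge 2$, both $I^r$ and $I^{r-1}$ are non-zero proper monomial ideals. I would apply Lemma \ref{lem_property_partial*}(vi) to write
\[
\partial^*(I^r)=\partial^*(I)\cdot I^{r-1}\subseteq I^{r-1},
\]
the inclusion being trivial since $\partial^*(I)$ is an ideal of $R$. Proposition \ref{prop_criterion_partial*}, applied with $I_1=I^r$ and $I_2=I^{r-1}$, then produces a function $\phi:\Gc(I^r)\to \Gc(I^{r-1})$ satisfying the (LCM) property. Feeding this $\phi$ into Lemma \ref{lem_mapsofgens_LCM} gives that the inclusion $I^r\to I^{r-1}$ is Tor-vanishing, completing the proof.

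The argument is really a bookkeeping exercise: all the technical content has been packaged into the preparatory lemmas. There is no substantive obstacle to overcome at this stage — the only mildly delicate point is the boundary case $r=1$, where Proposition \ref{prop_criterion_partial*} does not directly apply and one must argue by hand using the fact that $I\subseteq \mm$ and that $R$ is a free $R$-module. Everything else is an immediate chain of implications.
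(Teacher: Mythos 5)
Your proof is correct and follows essentially the same route as the paper's: reduce to the (LCM) criterion of Lemma \ref{lem_mapsofgens_LCM} via Proposition \ref{prop_criterion_partial*}, using $\partial^*(I^s)=\partial^*(I)I^{s-1}\subseteq I^{s-1}$ from Lemma \ref{lem_property_partial*}(vi). Your separate treatment of the $r=1$ case is a legitimate and careful refinement, since Proposition \ref{prop_criterion_partial*} as stated requires both ideals to be proper and so does not literally cover $I^{r-1}=R$; the paper glosses over this boundary case, which your direct Tor-vanishing argument handles cleanly.
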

\begin{proof}
Let $I \subseteq \mm$ be a monomial ideal. There is nothing to do if $I=(0)$, so we assume that $I\neq (0)$. By Lemma \ref{lem_mapsofgens_LCM}, it suffices to prove that for any $s\ge 1$, there exists a map $\Gc(I^s)\to \Gc(I^{s-1})$ with the (LCM) property. Thanks to Proposition \ref{prop_criterion_partial*}, we only need to check that $\partial^*(I^s) \subseteq I^{s-1}$, but this follows from Lemma \ref{lem_property_partial*}(vi).
\end{proof}

\subsection{A catalog}

We provide in this section more classes of ideals which are of small or doubly small type. For some classes of ideals satisfying the hypothesis of the next result, see, e.g., \cite{BC}, \cite{HHO}, \cite{HHZ}.
\newpage
\begin{prop}
\label{prop_idealswithKoszulpowers}
Assume that all the powers of $I$ are Koszul. Then:
\begin{enumerate}[\quad \rm(i)]
\item $I$ is of small type.
\item  If moreover $R$ is a Koszul algebra and $I \subseteq \mm^2$, then $I$ is of doubly small type.
\end{enumerate}
\end{prop}
\begin{proof}
Take $r\ge 1$. Since $I^r\subseteq \mm I^{r-1}$ and $I^{r-1}$ is Koszul, the map $I^r\to I^{r-1}$ is Tor-vanishing, thanks to Theorem \ref{thm_m-smallext}.

Now assume further that $R$ is a Koszul algebra and $I \subseteq \mm^2$. For $r\ge 1$, consider the chain $I^r \subseteq \mm I^{r-1} \subseteq I^{r-1}$. Since $I^{r-1}$ is Koszul and $R$ is a Koszul algebra, we obtain by \cite[Corollary 3.8]{Ng1} that $\mm I^{r-1}$ is Koszul. From the inclusion $I^r\subseteq \mm^2I^{r-1}$ and Theorem \ref{thm_m-smallext}, the map $I^r\to \mm I^{r-1}$ is Tor-vanishing. As seen above, the map $\mm I^{r-1} \to I^{r-1}$ is Tor-vanishing as well, so Lemma \ref{lem_composition_Tor-vanishing} implies that $I^r\to I^{r-1}$ is doubly Tor-vanishing for all $r\ge 1$. In other words, $I$ is of doubly small type.
\end{proof}

\begin{prop}
Let $(R,\mm)$ be a noetherian local ring and $I$ a proper ideal generated by a regular sequence. Then:
\begin{enumerate}[\quad \rm (i)]
\item The ideal $I$ is of small type. 
\item If moreover $I$ is contained in $\mm^2$ then it is of doubly small type.
\end{enumerate}
\end{prop}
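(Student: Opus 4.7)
The plan is to construct, for each $r \geq 1$, a chain map $\vphi^{(r)}: F^{(r)}_\bullet \to F^{(r-1)}_\bullet$ lifting the inclusion $I^r \hookrightarrow I^{r-1}$ between free resolutions, with all entries lying in $I$. Once such a lift is in hand, Remark~\ref{rem_equivalenceTor-vanishing}(1) delivers (i) (since $I\subseteq\mm$); if moreover $I\subseteq\mm^2$, the entries lie in $\mm^2$, and Remark~\ref{rem_equivalenceTor-vanishing}(2) delivers (ii). Notably we cannot simply invoke Proposition~\ref{prop_idealsofsmalltype}: for $I = (x^2,y^2) \subseteq k[[x,y]]$ the power $I$ itself has positive linearity defect (every entry of the Koszul differential lies in $\mm^2$, killing the linear part), so the hypothesis ``all powers are Koszul'' fails in general for regular sequences.

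The first task is to exhibit a free resolution $(F^{(r)}_\bullet, d^{(r)}_\bullet)$ of $I^r$ whose differentials satisfy $d^{(r)}_i(F^{(r)}_i) \subseteq I\cdot F^{(r)}_{i-1}$ for every $i \geq 1$. Because $f_1,\dots,f_c$ is a regular sequence, the Rees algebra $R[It]$ coincides with the symmetric algebra $\mathrm{Sym}_R(I) \cong R[T_1,\dots,T_c]/(f_iT_j - f_jT_i : i<j)$, so reading off the $r$-th graded piece presents $I^r$ with a first syzygy matrix whose entries are $\pm f_i \in I$. Resolving higher syzygies by Koszul-type relations -- or, equivalently, by invoking the classical resolution of powers of a complete intersection, whose differentials are $R$-linear combinations of the $f_j$'s -- yields the desired resolution. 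The case $c=2$ already illustrates this: $I^r = (f_1^r, f_1^{r-1}f_2, \ldots, f_2^r)$ is resolved by a single ``Catalan'' matrix whose non-zero entries are $\pm f_1, \pm f_2$.

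The second task is to build $\vphi^{(r)}$ by induction on homological degree. Labeling a basis of $F^{(r)}_0$ by monomials $\mathbf{f}^{\mathbf{a}} = f_1^{a_1}\cdots f_c^{a_c}$ with $|\mathbf{a}|=r$, set $\vphi^{(r)}_0(e_{\mathbf{a}}) = f_j\, e_{\mathbf{a}-\epsilon_j}$ for some $j$ with $a_j \geq 1$; this has entries in $I$. Higher components $\vphi^{(r)}_i$ are then constructed to commute with the differentials while retaining entries in $I$, exploiting the compatible combinatorial structure of $F^{(r)}_\bullet$ and $F^{(r-1)}_\bullet$ under the ``shift by one monomial factor'' defining $\vphi^{(r)}_0$ -- as in the $c=r=2$ model computation, where the lift turns out to be $\vphi_1 = (f_1, 0)$ after direct solution of the compatibility equation.

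The main technical obstacle is verifying in full generality that every higher $\vphi^{(r)}_i$ can be chosen with entries in $I$: this is a syzygy-bookkeeping calculation rooted in the specific shape of the resolution. A possibly cleaner alternative is induction on the length $c$ of the regular sequence. The base case $c=1$ is immediate, since $I=(f)$ yields $I^r\cong R$ with the inclusion becoming multiplication by $f \in I$ (and by $f \in \mm^2$ if $I \subseteq \mm^2$). For the inductive step one could decompose $I^r$ along powers of $f_c$ via short exact sequences $0 \to f_c I^{r-1}\to I^r \to I^r/f_cI^{r-1}\to 0$, then combine Remark~\ref{rem_equivalenceTor-vanishing}, Lemma~\ref{lem_composition_Tor-vanishing}, and the induction hypothesis applied to $J=(f_1,\dots,f_{c-1})$ to propagate Tor-vanishing (respectively doubly Tor-vanishing when $I \subseteq \mm^2$) from $J$ to $I$.
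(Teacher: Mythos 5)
Your opening remark is correct and useful: for $I=(x^2,y^2)\subseteq k[[x,y]]$ every power has positive linearity defect (the differentials of the resolutions of $I^r$ are quadratic, so the linear part dies), which rules out Proposition~\ref{prop_idealsofsmalltype} as a shortcut. Your first strategy, building resolutions of $I^r$ with differentials in $I$ and lifting the inclusion with entries in $I$, is exactly the route the paper names in one sentence and then declines to follow (``We can use the Eagon--Northcott resolution\ldots Here is an alternative argument.''). You acknowledge that controlling the higher components of the lift is an unresolved ``syzygy bookkeeping'' problem; that is a genuine gap, and your sketch does not close it.

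Your second strategy (induction on the length $c$) is the same framework as the paper's proof, but the short exact sequence you choose does not set up the induction. You propose $0\to f_cI^{r-1}\to I^r\to I^r/f_cI^{r-1}\to 0$; the inclusion $I^r\hookrightarrow I^{r-1}$ does not factor through this sequence, and since it is not of Mayer--Vietoris type the mapping-cone machinery is unavailable. You say only that one ``could combine'' Remark~\ref{rem_equivalenceTor-vanishing}, Lemma~\ref{lem_composition_Tor-vanishing}, and the induction hypothesis, without indicating how. The paper instead writes $I^r = fI^{r-1}+K^r$ with $K=(f_2,\dots,f_p)$, uses the regularity of $f$ on $R/K^r$ to get $fI^{r-1}\cap K^r = fK^r$, and works with the Mayer--Vietoris sequence $0\to fK^r\to fI^{r-1}\oplus K^r\to I^r\to 0$. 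This specific choice is the whole point: the resolution of $fK^r$ is obtained by multiplying a resolution $A$ of $K^r$ by $f$, the induction hypothesis gives a lifting $\veps$ of $K^r\to I^{r-1}$ landing in $\mm B$ (where $B$ resolves $I^{r-1}$), and one writes down explicit liftings $\Lambda(x)=(\veps(x),fx)$ and $\Phi(y,z)=fy-\veps(z)$ with $\Phi\circ\Lambda=0$, so $\Phi$ extends by zero over the mapping cone of $\Lambda$ --- which is the minimal resolution of $I^r$ --- to a lifting of $I^r\to I^{r-1}$ landing in $\mm$ times the target (in $\mm^2$ when $I\subseteq\mm^2$, giving part (ii)). Your sketch is missing this decomposition and the explicit compatible liftings, which is where the real work of the induction step lies.
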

\begin{proof}
(i) We can use the Eagon--Northcott resolution to directly construct a suitable lifting of the map $I^r\to I^{r-1}$. Here is an alternative argument. By assumption $I=(f_1,\ldots,f_p)$ where $p\ge 1$ and $f_1,\ldots,f_p$ is a regular sequence of elements in $\mm$. We use induction on $p\ge 1$ and $r\ge 1$ that there exists a lifting on the level of minimal free resolutions of the map $I^r\to I^{r-1}$ which induces the zero map after tensoring with $k$. If $p=1$ or $r=1$ then the conclusion is clear. Assume that $p\ge 2$ and $r\ge 2$. Denote $K=(f_2,\ldots,f_p)$ and $f=f_1$. We have $I^r=fI^{r-1}+K^r.$ Since $f$ is $R/K^r$-regular (see \cite[Page 6]{BH}), we have $fI^{r-1}\cap K^r=fK^r.$

Let $A, B$ be the minimal free resolution of $K^r, I^{r-1}$, resp. Using the induction hypothesis, the map $K^r\to K^{r-1}$ is Tor-vanishing, hence so is $K^r\to I^{r-1}$. Since $f$ is $R/K^r$-regular, a lifting of the map $fK^r\to K^r$ is given by $A \xrightarrow{\cdot f} A$. Therefore a minimal free resolution of $I^r$ is obtained from the mapping cone construction for the map $fK^r \to fI^{r-1}\oplus K^r$. 
\begin{equation*}
\begin{gathered}
\xymatrix{
0 \ar[r] & fK^r \ar[r]  & fI^{r-1} \oplus K^r \ar[r]  & I^r \ar[d] \ar[r] & 0\\
          &  &   & I^{r-1}   &                          }
\end{gathered}
\end{equation*}
Let $\veps$ be a lifting of the map $K^r\to I^{r-1}$ such that $\veps \otimes_R k=0$. We have the following lifting diagram.
\begin{equation*}
\begin{gathered}
\xymatrix{
 A \ar[rrr]^{(\veps, \cdot f)} \ar[d]  &&& B\oplus A \ar[rrr]^{(\cdot f) \mid_B - \veps \mid_A} \ar[d] &&& B \ar[d] \\                       
 fK^r \ar[rrr]^{a\mapsto (a,a)}  &&& fI^{r-1} \oplus K^r \ar[rrr]^{(b,c) \mapsto b-c}  &&& I^{r-1} }
\end{gathered}
\end{equation*}
Specifically, a lifting $\Lambda: A\to B\oplus A$ of the map $fK^r\to fI^{r-1}\oplus K^r$ is given by
\[
A\ni x \mapsto (\veps(x),fx) \in B\oplus A.
\]
A lifting $\Phi: B\oplus A \to B$ of the map $fI^{r-1} \oplus K^r \to I^{r-1}$ is given by
\[
B\oplus A \ni (y,z) \mapsto fy -\veps(z) \in B.
\]
The composition $\Phi\circ \Lambda$ is zero, hence we can extend $\Phi$ to a map from the mapping cone of $\Lambda$ to $B$ by setting it to be zero on $A$. The extended map is a lifting of the map $I^r\to I^{r-1}$ which is zero after tensoring with $k$. Hence $I^r \to I^{r-1}$ is Tor-vanishing. The induction and hence the proof is finished.

(ii) Argue similarly as for (i).
\end{proof}

Assume that $R=k[x_1,\ldots,x_m]$ is a polynomial ring over $k$ (where $m\ge 0$) and $(0)\neq I \subseteq R$ is a proper homogeneous ideal. Following Herzog and Huneke \cite{HeHu}, if $\chara k=0$ we say that a homogeneous (but possibly non-monomial) ideal $I$ is {\it strongly Golod} if $\partial(I)^2\subseteq I$. Independently of $\chara k$, we say that an ideal $I$ is $\up{*}$strongly Golod if $I$ is a monomial ideal and $\partial^*(I)^2\subseteq I$. The two notions are compatible when $\chara k=0$: $I$ is $\up{*}$strongly Golod if and only if $I$ is strongly Golod in the sense of Herzog and Huneke. Below, let $\widetilde{I}=\bigcup_{s\ge 1} (I:\mm^s)$ be the saturation of $I$. Let $\overline{I}$, the integral closure of $I$, be the set of $x\in R$ which satisfies a relation $x^n+a_1x^{n-1} +\cdots+a_{n-1}x+a_n=0$ where $n\ge 1, a_i\in I^i$ for $1\le i \le n$. The integral closure is an ideal containing $I$, and if $I$ is a monomial ideal then so is $\overline{I}$: In fact, by Proposition 1.4.2 and the discussion preceding it in \cite{HSw}, $\overline{I}$ is generated by monomials $f\in R$ such that $f^r\in I^r$ for some $r\ge 1$.

For each $s\ge 1$, let $I^{(s)}=\bigcap_{P\in \Min(I)}I^sR_P \cap R$ be the $s$-th symbolic power of $I$, where $\Min(I)$ denotes the set of minimal primes of $I$.
\begin{prop}
\label{prop_stronglyGolodisofdoublysmalltype}
Assume that either $\chara k=0$ or $I$ is a monomial ideal. 
\begin{enumerate}[\quad \rm (i)]
\item If $I$ is strongly Golod or $\up{*}$strongly Golod then it is of doubly small type.
\item  For all $s\ge 2$, the ideals $I^s, \widetilde{I^s}, I^{(s)}$ are of doubly small type. 
\item If $I$ is monomial then for all $s\ge 2$, $\overline{I^s}$ is of doubly small type.
\end{enumerate}
\end{prop}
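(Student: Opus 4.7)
The strategy is to reduce (ii) and (iii) to (i), and to prove (i) by factoring the inclusion $I^r\hookrightarrow I^{r-1}$ through an intermediate ideal so that Lemma~\ref{lem_composition_Tor-vanishing} produces a doubly Tor-vanishing map.

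For (i), fix $r\ge 1$ and set $L:=\partial(I)\cdot I^{r-1}$ in characteristic zero, or $L:=\partial^*(I)\cdot I^{r-1}=\partial^*(I^r)$ in the monomial case (the identification coming from Lemma~\ref{lem_property_partial*}(vi)). The natural inclusion factors as $I^r\to L\to I^{r-1}$, and both factors will be Tor-vanishing. The first because $\partial(I^r)\subseteq L$ (respectively $\partial^*(I^r)=L$) by the Leibniz rule, so Lemma~\ref{lem_mapsofTor} (respectively Proposition~\ref{prop_criterion_partial*}) applies. The second because one more application of Leibniz gives
\[
\partial(L)\subseteq \partial^{2}(I)\cdot I^{r-1}+\partial(I)\cdot\partial(I^{r-1})\subseteq R\cdot I^{r-1}+\partial(I)^{2}\cdot I^{r-2}\subseteq I^{r-1},
\]
using the strongly Golod hypothesis $\partial(I)^{2}\subseteq I$ at the end; the analogous computation with $\partial^*$, leveraging Lemma~\ref{lem_property_partial*}(v) and (vi), handles the $\up{*}$strongly Golod case. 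Composing via Lemma~\ref{lem_composition_Tor-vanishing} then delivers the desired doubly Tor-vanishing $I^r\to I^{r-1}$.

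For (ii), a direct computation shows $I^s$ is itself strongly Golod (respectively $\up{*}$strongly Golod) for every $s\ge 2$:
\[
\partial(I^s)^{2}\subseteq \partial(I)^{2}\cdot I^{2s-2}\subseteq R\cdot I^{2s-2}=I^{2s-2}\subseteq I^s,
\]
so (i) applies. For $\widetilde{I^s}$ and $I^{(s)}$ with $s\ge 2$, I would invoke the theorems of Herzog--Huneke \cite{HeHu} asserting that both are strongly Golod (respectively $\up{*}$strongly Golod in the monomial case), and then apply (i) once more.

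For (iii), I would show that $\overline{I^s}$ is $\up{*}$strongly Golod for $s\ge 2$, after which (i) concludes. The key technical step, which I expect to be the main obstacle, is the ``derivation property'' $\partial^*(\overline{I^s})\subseteq \overline{I^{s-1}}$. Granting it, the standard containment $\overline{J}\cdot\overline{K}\subseteq \overline{JK}$ together with $2s-2\ge s$ yields
\[
\partial^*(\overline{I^s})^{2}\subseteq \overline{I^{s-1}}\cdot\overline{I^{s-1}}\subseteq \overline{I^{2s-2}}\subseteq \overline{I^s},
\]
as required. I would establish the derivation property via the Newton polyhedral description $\overline{I^s}=\{x^{\alpha}:\alpha\in s\cdot\mathrm{NP}(I)\cap\Z^{n}_{\ge 0}\}$: given $\alpha\in s\cdot\mathrm{NP}(I)$ with $\alpha_i\ge 1$, I would exhibit $\beta'\in\mathrm{conv}(\Gc(I))$ with $\alpha-e_i\ge(s-1)\beta'$, using crucially that $\alpha_i\ge 1$ forces some $g\in\Gc(I)$ with $g_i\ge 1$ and so permits a suitable adjustment of any convex decomposition $\alpha=s\beta+\gamma$.
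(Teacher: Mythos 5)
Your treatment of parts (i) and (ii) is essentially the paper's. In (i), factoring $I^r\hookrightarrow I^{r-1}$ through $L=\partial(I)I^{r-1}$ (resp.\ $\partial^*(I)I^{r-1}$) and using the Leibniz identity plus the strongly Golod hypothesis to see both inclusions are Tor-vanishing is exactly the paper's argument with $P=\partial(I)I^r$, modulo reindexing $r\mapsto r+1$; your bound involving $I^{r-2}$ is vacuous when $r=1$, but then $\partial(L)=\partial^2(I)\subseteq R=I^{r-1}$ anyway, so nothing is lost. In (ii), your direct computation $\partial(I^s)^2\subseteq I^{2s-2}\subseteq I^s$ is the paper's Lemma~\ref{lem_*stronglyGolod}(iii), and deferring $\widetilde{I^s}$, $I^{(s)}$ to Herzog--Huneke in characteristic zero is correct; for the monomial case, however, the $\up{*}$strongly Golod statements are not in \cite{HeHu} (that notion is introduced in this paper), so the reference there should be Lemma~\ref{lem_*stronglyGolod}(ii),(iii),(iv).

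Part (iii) is where you genuinely depart from the paper. The paper deduces that $\overline{I^s}$ is $\up{*}$strongly Golod by combining Lemma~\ref{lem_*stronglyGolod}(iii) ($I^s$ is $\up{*}$strongly Golod) with Lemma~\ref{lem_*stronglyGolod}(i) (the $\up{*}$strongly Golod property passes to the integral closure), the key technical input being the claim that $f^r\in I^r$ and $x_i\mid f$ imply $(f/x_i)^r\in I^{\lfloor r/2\rfloor}$. You instead propose the ``derivation property'' $\partial^*(\overline{I^s})\subseteq\overline{I^{s-1}}$ and correctly flag it as the main obstacle; as written, it is a genuine gap in your argument, since the sketch via ``suitable adjustment of a convex decomposition'' does not work directly (the hypothesis $\alpha_i\ge 1$ need not be witnessed by a generator $g$ with $g_i\ge 1$ in a real convex decomposition). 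The property is nonetheless true and can be proved elementarily using the monomial characterization of $\overline{(-)}$ that the paper records: if $f=x^\alpha$ with $f^r\in I^{sr}$, write $r\alpha=\gamma+\sum_{k=1}^{sr}g^{(k)}$ with $g^{(k)}\in\Gc(I)$ and $\gamma\ge 0$; if $\gamma_i\ge r$ subtract $re_i$ from $\gamma$, otherwise discard a minimal set of at most $r-\gamma_i\le r$ of the $g^{(k)}$ having positive $i$-th entry to absorb $re_i$, leaving a product of at least $(s-1)r$ generators, so $(f/x_i)^r\in I^{(s-1)r}$. Given this, your chain $\partial^*(\overline{I^s})^2\subseteq\overline{I^{s-1}}\cdot\overline{I^{s-1}}\subseteq\overline{I^{2s-2}}\subseteq\overline{I^s}$ is correct, and then (i) concludes. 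So your route works but requires a technical lemma comparable in weight to the paper's Lemma~\ref{lem_*stronglyGolod}(i); neither is free, and they yield the same conclusion by different combinatorics.
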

First, we establish the following lemma, which is inspired by \cite[Theorem 2.3 and Proposition 3.1]{HeHu}. The proof of \cite[Proposition 3.1]{HeHu} in page 96--97 contains several problems/mistakes, e.g., in p.\,97, line 5, the best one can say is $(w/x_k)^r \in I^aI^{\lfloor (b-a)/2\rfloor}$. In p.\,97, line 7, the equality $r=d+b$ is generally false, one can only say $r\le d+b$. The crucial claim in the proof is nevertheless correct. We salvage the above problems and mistakes in the proof of part (i) of Lemma \ref{lem_*stronglyGolod}.
\begin{lem}
\label{lem_*stronglyGolod}
Let $I$ be a monomial ideal.
\begin{enumerate}[\quad \rm(i)]
\item If $I$ is $\up{*}$strongly Golod, then so is $\overline{I}$.
\item Let $I$ be $\up{*}$strongly Golod, and $L$ a monomial ideal such that $I:L=I:L^2$. Then $I:L$ is also $\up{*}$strongly Golod. 
\item For all $s\ge 2$, $I^s$ is $\up{*}$strongly Golod.
\item For all $s\ge 2$, $\overline{I^s}, I^{(s)}, \widetilde{I^s}$ are $\up{*}$strongly Golod.
\end{enumerate}
\end{lem}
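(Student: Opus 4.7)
My plan is to dispatch the four parts in order (iii), (ii), (i), (iv), since (iv) builds on the prior three and (i) is the technical core. Part (iii) is immediate: Lemma \ref{lem_property_partial*}(vi) gives $\partial^*(I^s)=\partial^*(I)I^{s-1}$, so $\partial^*(I^s)^2=\partial^*(I)^2I^{2s-2}\subseteq I^{2s-2}\subseteq I^s$ for $s\ge 2$, using only $\partial^*(I)\subseteq R$ and the inequality $2s-2\ge s$. Part (ii) reduces to the auxiliary containment $\partial^*(J)\cdot L\subseteq\partial^*(I)$ with $J=I:L$. Given $f\in\Gc(J)$ with $x_k\mid f$ and $g\in\Gc(L)$, the monomial $fg$ lies in $I$, so $fg=mh$ for some $h\in\Gc(I)$ and monomial $m$. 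Since $x_k$ divides $mh$, either $x_k\mid h$ and $(f/x_k)g=m(h/x_k)\in\partial^*(I)$, or $x_k\mid m$ and $(f/x_k)g=(m/x_k)h\in I\subseteq\partial^*(I)$ (the inclusion $I\subseteq\partial^*(I)$ follows from $I\subseteq\mm\partial^*(I)$ in Lemma \ref{lem_property_partial*}(ii)). Squaring yields $\partial^*(J)^2L^2\subseteq\partial^*(I)^2\subseteq I$, hence $\partial^*(J)^2\subseteq I:L^2=I:L=J$, where the middle equality is the hypothesis.

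For (i), the strategy is to establish the auxiliary containment $\partial^*(\overline I)\subseteq\overline{\partial^*(I)}$. Once this is in hand, the conclusion follows from
\[
\partial^*(\overline I)^2\subseteq\overline{\partial^*(I)}^2\subseteq\overline{\partial^*(I)^2}\subseteq\overline I,
\]
using the standard fact $\overline A\cdot\overline B\subseteq\overline{AB}$ for monomial ideals together with the hypothesis $\partial^*(I)^2\subseteq I$. To prove the key containment, take $f\in\Gc(\overline I)$ with $x_k\mid f$ and show $(f/x_k)^N\in\partial^*(I)^N$ for $N\gg 0$. Starting from $f^r\in I^r$, write $f^{rN}=(f^r)^N$ as a product of $rN$ minimal generators $u_i$ of $I$ times a monomial, and use the factorization $u_i=x_{l_i}p_i$ with $p_i\in\partial^*(I)$ coming from Lemma \ref{lem_property_partial*}(ii), choosing $l_i=k$ whenever $x_k\mid u_i$ to extract as many factors of $x_k$ as possible. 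Any residual deficit is absorbed by passing to a sufficiently large $N$ and contracting surplus factors via the $\up{*}$strongly Golod relation $\partial^*(I)^2\subseteq I$. The main obstacle---precisely the gap the authors flag in the Herzog--Huneke proof---is the case in which $x_k$ has high multiplicity in a single $u_i$: the naive one-$x_k$-per-generator distribution then leaves a shortfall that must be handled by this Golod pairing, and the careful bookkeeping needed here is where the original argument breaks down.

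Finally, (iv) follows by combining the first three parts. For $\overline{I^s}$, apply (i) to $I^s$, which is $\up{*}$strongly Golod by (iii). For $\widetilde{I^s}=I^s:\mm^\infty$, the Noetherian filtration $\{I^s:\mm^N\}_{N\ge 0}$ stabilizes, so $I^s:\mm^N=I^s:\mm^{2N}=I^s:(\mm^N)^2$ for $N\gg 0$; then (ii) applied with $L=\mm^N$ to the $\up{*}$strongly Golod ideal $I^s$ yields the conclusion. For $I^{(s)}$, choose a monomial ideal $L$ contained in every embedded prime of $I^s$ but in no minimal prime of $I$, so that $I^{(s)}=I^s:L^N$ for $N\gg 0$ with the same stability property $I^s:L^N=I^s:(L^N)^2$; then (ii) applies again with $L^N$ in place of $L$.
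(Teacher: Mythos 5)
Parts (ii), (iii) and (iv) of your proposal are correct and essentially identical to the paper's arguments: (iii) follows from $\partial^*(I^s)=\partial^*(I)I^{s-1}$; (ii) from the observation that a monomial $w\in I$ divisible by $x_k$ satisfies $w/x_k\in\partial^*(I)$; and (iv) by picking the appropriate colon ideal $L$ so that $I^s:L=I^s:L^2$ and applying (ii), (iii) (and for $\overline{I^s}$, also (i)).

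Part (i), however, is not proved. You announce the strategy of establishing $\partial^*(\overline I)\subseteq\overline{\partial^*(I)}$, but you never actually establish it; your last sentence explicitly concedes that the required bookkeeping is ``where the original argument breaks down,'' so the key step is left open. The difficulty you flag is real: given $f^r\in I^r$ with $x_k\mid f$, you cannot in general write $(f/x_k)^{r}$ as a product of $r$ elements of $\partial^*(I)$, because when $x_k$ has high multiplicity in a single generator $u_i$ of the factorization, dividing $u_i$ by more than one copy of $x_k$ takes you out of $\partial^*(I)$, and there may not be enough $x_k$'s in the other factors or the monomial cofactor to compensate. Moreover your intermediate claim is \emph{stronger} than what the paper proves and what is actually needed: the paper only shows $(f/x_i)^r\in I^{\lfloor r/2\rfloor}$, which is equivalent to $(f/x_i)^r\in\partial^*(I)^{\lfloor r/2\rfloor}$ or so, and that by itself does not give $f/x_i\in\overline{\partial^*(I)}$ (one would need $(f/x_i)^N\in\partial^*(I)^N$). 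The paper avoids this by never trying to put a \emph{single} factor $f/x_i$ into $\overline{\partial^*(I)}$; it works with the \emph{pair} $(f/x_i)(g/x_j)$ and shows $(f/x_i)^{2rs}(g/x_j)^{2rs}\in I^{rs}\cdot I^{rs}=I^{2rs}$, which suffices. The heart of that proof is the arithmetic ``Observation'' $d_{r-a+1}+\cdots+d_r\ge 2a$, obtained by sorting the $x_i$-exponents $d_j$ and splitting off the $a$ generators not divisible by $x_i$; this is precisely the bookkeeping you skipped. To complete your part (i) you would either have to prove the sharper containment $\partial^*(\overline I)\subseteq\overline{\partial^*(I)}$ (which is not obviously true, and which I did not find a proof of) or retreat to the paper's pair-based estimate.
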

\begin{proof}
(i) Let $f$ be a monomial such that $f^r \in I^r$ for some $r\ge 1$ and $x_i\in \supp(f)$. We claim that  $(f/x_i)^r\in I^{\lfloor r/2 \rfloor}$.

Write $f^r=m_1\cdots m_r$ where for each $1\le j\le r$, $m_j$ is a monomial in $I$. For $1\le j\le r$, let $d_j$ be maximal so that $x_i^{d_j}$ divides $m_j$. By permuting the indices, we can assume that $d_1=\cdots=d_a=0$ and $1\le d_{a+1}\le \cdots \le d_r$. Clearly $x_i^r|f^r$, hence $d_{a+1}+\cdots+d_r \ge r$. If $a\ge \lfloor r/2 \rfloor$, then
\[
(f/x_i)^r= (m_1\cdots m_a) \left((m_{a+1}\cdots m_r)/x_i^r\right) \in I^a \subseteq I^{\lfloor\frac{r}{2} \rfloor}.
\]
Consider the case $a< \lfloor r/2 \rfloor$. 

{\bf Observation}: It holds that $d_{r-a+1}+\cdots+d_r \ge 2a$. 

Indeed, assume that $d_{r-a+1}+\cdots+d_r < 2a$, then since $d_{r-a+1} \le \cdots \le d_r$, we get $d_{r-a+1} <2$. In particular, $d_{a+1}=\cdots=d_{r-a}=1$. This yields $d_{a+1} +\cdots+ d_r =r-2a+ (d_{r-a+1}+\cdots+d_r) <r$,
which is a contradiction. Hence the observation is true.

Since $I$ is $\up{*}$strongly Golod and $x_i$ divides $m_{a+1},\ldots,m_{r-a}$, we have an inclusion $(m_{a+1}/x_i)\cdots (m_{r-a}/x_i) \in I^{\lfloor (r-2a)/2\rfloor}$. Together with the observation, we get the inclusion in the following chain
\[
\left(\frac{f}{x_i}\right)^r=(m_1\cdots m_a)\left(\frac{m_{a+1}}{x_i} \cdots \frac{m_{r-a}}{x_i}\right) \frac{m_{r-a+1}\cdots m_r}{x_i^{2a}} \in I^aI^{\lfloor\frac{r-2a}{2}\rfloor}= I^{\lfloor\frac{r}{2} \rfloor}.
\]
This finishes the proof of the claim. 

Now take $f, g\in \overline{I}$ and $x_i\in \supp(f), x_j\in \supp(g)$. There exist $r,s\ge 1$ such that $f^r \in I^r,g^s\in I^s$. Note that $f^{2rs}, g^{2rs} \in I^{2rs}$, hence by the above claim we obtain $(fg/(x_ix_j))^{2rs}=(f/x_i)^{2rs}(g/x_j)^{2rs} \in I^{rs}I^{rs}=I^{2rs}$. Hence $(f/x_i)(g/x_j) \in \overline{I}$. This implies that $\overline{I}$ is $\up{*}$strongly Golod.

(ii) Take $f, g\in I:L$ and $x_i \in \supp(f), x_j\in \supp(g)$. Take any $h_1,h_2\in L$. Then $fh_1,gh_2\in I$, so $(f/x_i)h_1,(g/x_j)h_2 \in \partial^*(I)$. In particular $(f/x_i)(g/x_j)h_1h_2 \in \partial^*(I)^2 \subseteq I$. The last chain together with the hypothesis yields $(f/x_i)(g/x_j) \in I:L^2=I:L$. This implies that $\partial^*(I:L)^2 \subseteq I:L$, namely $I:L$ is $\up{*}$strongly Golod.

(iii) By Lemma \ref{lem_property_partial*}(vi), $\partial^*(I^s)=\partial^*(I)I^{s-1}$. Hence $\partial^*(I^s)^2 \subseteq (I^{s-1})^2 \subseteq I^s$, proving that $I^s$ is $\up{*}$strongly Golod.

(iv) By (i) and (iii), we have that $\overline{I^s}$ is $\up{*}$strongly Golod.

Let $J$ be the intersection of all the associated, non-minimal prime ideals of $I^s$. Then $I^{(s)}=\cup_{i\ge 1}(I^s:J^i)$. Since $R$ is noetherian, for large enough $i$, $I^{(s)}=I^s:J^i$. Set $L=J^i$, then $I^{(s)}=I^s:L=I^s:L^2$. Hence by (ii) and (iii), $I^{(s)}$ is $\up{*}$strongly Golod.

Similarly from (ii) and (iii), we deduce that $\widetilde{I^s}$ is $\up{*}$strongly Golod. The proof is concluded.
\end{proof}
\begin{proof}[Proof of Proposition \ref{prop_stronglyGolodisofdoublysmalltype}]
(i) First, consider the case $\chara k=0$. Let $P=\partial(I)I^r$ then $I^{r+1} \subseteq P \subseteq I^r$ by Euler's identity for homogeneous polynomials. By Lemma \ref{lem_composition_Tor-vanishing}, it suffices to show that the inclusion maps $I^{r+1} \subseteq P$ and $P\subseteq I^r$ are Tor-vanishing. For this, we wish to apply Lemma \ref{lem_mapsofTor}, so we claim that $\partial(I^{r+1}) \subseteq P$ and $\partial(P) \subseteq I^r$. The first inclusion is clear. Now $\partial(P) \subseteq \partial(\partial(I))I^r+\partial(I)^2I^{r-1} \subseteq I^r$,
where the second inclusion follows from the hypothesis.

The case $I$ is $\up{*}$strongly Golod is proved similarly, where $\partial^*(I)$ and Proposition \ref{prop_criterion_partial*} are used in places of $\partial(I)$ and Lemma \ref{lem_mapsofTor}. 

(ii) If $\chara k=0$, by \cite[Theorem 2.3]{HeHu}, for any $s\ge 2$, the ideals $I^s, \widetilde{I^s}, I^{(s)}$ are strongly Golod. If $I$ is monomial, by Lemma \ref{lem_*stronglyGolod}, the same ideals are $\up{*}$strongly Golod. Together with part (i) we get that such ideals are of doubly small type.

(iii) This follows by combining Lemma \ref{lem_*stronglyGolod}(iv) and part (i).
\end{proof}
\subsection{Powers of mixed sums}
The original purpose of introducing ideals of small type is contained in part (i) of the following result. The purpose of introducing ideals of doubly small type is contained in Theorem \ref{thm_boundld_mixedsum}(ii).
\newpage
\begin{thm} 
\label{thm_sumsofsmalldoublysmall} 
Let $(R,\mm)$ and $(S,\nn)$ be standard graded algebras over $k$. Let $I\subseteq R, J\subseteq S$ be non-zero proper homogeneous ideals. Denote $T=R\otimes_k S$ and $P=I+J \subseteq T$. 
\begin{enumerate}[\quad \rm (i)]
\item Assume that $I$ and $J$ are of small type. Then for all $r\ge 0$ and all $s\ge 1$, we have a Betti splitting $I^rP^s=I^{r+1}P^{s-1}+I^rJ^s$. Furthermore, $P$ is also of small type.
\item Assume that $I$ and $J$ are of doubly small type. Then for all $r\ge 0$ and all $s\ge 1$, the maps $I^{r+1}J^s\to I^rJ^s$ and $I^{r+1}J^s\to I^{r+1}P^{s-1}$ are doubly Tor-vanishing, and the ideal $P$ is of doubly small type.
\end{enumerate}
\end{thm}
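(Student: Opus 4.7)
The plan splits into two stages: establishing the Betti splittings of part (i), then bootstrapping to conclude that $P$ is of (doubly) small type in both parts.

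For the Betti splittings, I would first compute $I^{r+1}P^{s-1}\cap I^rJ^s=I^{r+1}J^s$ via identity \eqref{eq_intersection}. By Lemma \ref{lem_criterion_Bettisplit}(ii), the splitting reduces to showing that the inclusions $I^{r+1}J^s\to I^rJ^s$ and $I^{r+1}J^s\to I^{r+1}P^{s-1}$ are Tor-vanishing. The key observation is that for any $a,b$ the $T$-module $I^aJ^b$ coincides with $I^a\otimes_k J^b$, so its minimal $T$-resolution is the tensor product of the minimal resolutions of $I^a$ over $R$ and $J^b$ over $S$ (as in Lemma \ref{lem_tensor}). Consequently $I^{r+1}J^s\to I^rJ^s$ is $(I^{r+1}\to I^r)\otimes_k\mathrm{id}$, Tor-vanishing because $I$ is of small type; and the second map factors as $I^{r+1}J^s\hookrightarrow I^{r+1}J^{s-1}\hookrightarrow I^{r+1}P^{s-1}$, whose first factor is Tor-vanishing by the symmetric argument using $J$. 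Part (ii) runs verbatim with \emph{doubly} Tor-vanishing in place of Tor-vanishing: tensoring a lift in $\mm^2$ with the identity lands in $\mm_T^2$, and postcomposing a doubly Tor-vanishing map with any chain map keeps the image in $\mm_T^2$ of the target.

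For $P$ of (doubly) small type, I would induct on $s\ge 1$ to prove the stronger statement that $I^rP^s\hookrightarrow I^rP^{s-1}$ is (doubly) Tor-vanishing for every $r\ge 0$; setting $r=0$ then gives the conclusion. Note that in case (ii) the base case uses $P\subseteq\mm_T^2$, which holds since doubly small type applied at $r=1$ forces $I\subseteq\mm^2$ and $J\subseteq\nn^2$. For $s=1$, I build the mapping-cone minimal resolution of $I^rP$ from the Betti splitting just established and lift its inclusion into $I^r$ using the tensor-product structure of the atomic resolutions; a direct computation shows that the two sides of the mapping-cone compatibility equation collapse to the same tensor-product map, so the homotopy term can be chosen to be zero and the whole lift lies in $\mm_T$ (resp.\ $\mm_T^2$) of the target. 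For the inductive step $s\ge 2$, I use the map of Betti-splitting short exact sequences induced by $I^rP^s\hookrightarrow I^rP^{s-1}$: the three ``corner'' vertical maps $I^{r+1}J^s\to I^{r+1}J^{s-1}$, $I^rJ^s\to I^rJ^{s-1}$, and $I^{r+1}P^{s-1}\to I^{r+1}P^{s-2}$ are already (doubly) Tor-vanishing by the tensor arguments and the inductive hypothesis, and I must upgrade them to a (doubly) Tor-vanishing lift of the right vertical map on the corresponding iterated mapping-cone minimal free resolutions.

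\textbf{Main obstacle.} The delicate point is controlling the homotopy term in the mapping-cone lifting at the inductive step: Tor-vanishing of the three corner maps does not in general force Tor-vanishing of the induced map between mapping cones, so no purely abstract lemma of that form is available. What saves the argument is the recursive tensor-product structure of the atomic pieces $I^aJ^b=I^a\otimes_k J^b$: all resolutions in sight are built iteratively from such tensor products, and the natural lifts along the inclusions are themselves tensor products of lifts over $R$ and over $S$. Exploiting this compatibility, one arranges the mapping-cone homotopies so that the compatibility equation $\phi'f-f\phi=dh+hd$ has its right-hand side lying in a submodule where the two sides visibly agree up to the Betti-splitting sign, allowing $h$ to be chosen in $\mm_T$ (resp.\ $\mm_T^2$) of the target. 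Carrying out this bookkeeping cleanly through the induction is the technically most involved part of the proof.
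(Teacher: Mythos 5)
Your proposal matches the paper's proof essentially step by step: the same intersection computation via Lemma~\ref{lem_intersect_quotient}, the same reduction via Lemma~\ref{lem_criterion_Bettisplit}, the same tensor-product argument for the corner maps, and the same induction on $s$ with explicit mapping-cone liftings. You have also correctly located the crux: the paper resolves your ``main obstacle'' by choosing liftings $\Phi,\Pi,\Psi,\Omega$ of the four maps in the Betti-splitting square so that $\Omega\circ\Phi=\Pi\circ\Psi$ holds strictly (homotopy zero), precisely by exploiting the tensor-product structure of the atomic resolutions $I^aJ^b\cong I^a\otimes_k J^b$ together with a factorization through a fixed lifting of $I^{r+1}J^{s-1}\to I^{r+1}P^{s-1}$.
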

\begin{proof}
(i) By Lemma \ref{lem_intersect_quotient} we have $I^{r+1}P^{s-1}\cap I^rJ^s=I^{r+1}J^s$. By Lemma \ref{lem_criterion_Bettisplit}, it suffices to check that the maps $I^{r+1}J^s \to I^rJ^s$ and $I^{r+1}J^s \to I^{r+1}P^{s-1}$ are Tor-vanishing. Since $I$ is of small type, the map $I^{r+1}\to I^r$ is Tor-vanishing. Tensoring over $k$ with $J^s$ is an exact functor, so the map $I^{r+1}J^s \to I^rJ^s$ is Tor-vanishing. The same holds for the map $I^{r+1}J^s \to I^{r+1}J^{s-1}$, and hence also for $I^{r+1}J^s \to I^{r+1}P^{s-1}$ which factors through the previous map.

We prove that for every $s\ge 1$, the map $I^rP^s\to I^rP^{s-1}$ is Tor-vanishing for all $r\ge 0$.  This clearly implies that $P$ is of small type. Induct on $s$. 

\textbf{Step 1}: If $s=1$, using $I^rP=I^{r+1}+I^rJ$ and Lemma \ref{lem_intersect_quotient}, we have the exactness of the first row in the following diagram
\begin{equation*}
\begin{gathered}
\xymatrix{
0 \ar[r] & I^{r+1}J \ar[r]  & I^{r+1} \oplus I^rJ \ar[r]  & I^rP \ar[d] \ar[r] & 0\\
         &                  &                             & I^r                & }
\end{gathered}
\end{equation*}
Let $\up{r}F, \up{r}G, \up{r}H$ be the minimal graded free resolutions of $I^r, J^r, P^r$ over $R,S, T$, resp. Then  minimal graded free resolutions over $T$ of $I^{r+1}J, I^{r+1} \oplus I^rJ$ and $I^r$ are given by $A=\up{r+1}F\otimes_k \up{1}G, B \oplus \overline{B}=(\up{r+1}F\otimes_k S) \oplus (\up{r}F\otimes_k \up{1}G)$ and $C=\up{r}F\otimes_k S$, resp. 

Let $\vphi: \up{r+1}F\to \up{r}F$ be a lifting of the map $I^{r+1} \to I^r$. Since $I$ is of small type, we can choose $\vphi$ such that $\vphi \otimes_R k=0$. Let $\veps: \up{1}G \to S$ be a lifting of the map $J \to S$ given by $\veps_i=0$ for $i\ge 1$. Choose bases for $\up{r+1}F, \up{r}F, \up{1}G$. We have a lifting diagram as follows.
\begin{equation*}
\begin{gathered}
\xymatrix{
 A \ar[rrr]^{(\id \otimes \veps, \vphi \otimes \id)} \ar[d]  &&& B\oplus \overline{B} \ar[rrr]^{(\vphi\otimes \id) \mid_B - (\id \otimes \veps)\mid_{\overline{B}}} \ar[d] &&& C \ar[d] \\                       
 I^{r+1}J \ar[rrr]^{a\mapsto (a,a)}  &&& I^{r+1} \oplus I^rJ \ar[rrr]^{(b,c) \mapsto b-c}  &&& I^r }
\end{gathered}
\end{equation*}
In other words, a lifting of $\Lambda: A\to B\oplus \overline{B}$ of the map $I^{r+1}J \to I^{r+1} \oplus I^rJ$ is given on the basis elements by 
$$
A\ni x\otimes y \mapsto (x\otimes \veps(y),\vphi(x)\otimes y) \in B\oplus \overline{B}.
$$ 
A lifting $\Pi: B\oplus \overline{B} \to C$ of the map $I^{r+1} \oplus I^rJ \to I^r$ is given on the basis elements by 
$$
B\oplus \overline{B} \ni (z\otimes 1, u\otimes v) \mapsto \vphi(z)\otimes 1 - u\otimes \veps(v) \in C.
$$
In particular, the composition map $A\to C$ is zero. Therefore we can extend $\Pi: B\oplus \overline{B} \to C$ to a map on the mapping cone of $\Lambda$ by setting $\Pi|_A =0$. By the choice of $\vphi$ and $\veps$, we have $\Pi \otimes_T k=0$. Hence $I^rP\to I^r$ is Tor-vanishing.

\textbf{Step 2}: Now assume that $s\ge 2$. Thanks to the argument for the Betti splitting $I^rP^s=I^{r+1}P^{s-1}+ I^rJ^s$ from above, we have a commutative diagram with exact rows. 
\begin{equation*}
\begin{gathered}
\xymatrixcolsep{5mm}
\xymatrixrowsep{5mm}
\xymatrix{
0 \ar[rr] && I^{r+1}J^s \ar[rr]^{\up{1}\omega} \ar[d]^{\gamma} && I^{r+1}P^{s-1} \oplus I^rJ^s \ar[rr] \ar[d]^{\delta} && I^rP^s \ar[d] \ar[rr] && 0\\
0 \ar[rr] && I^{r+1}J^{s-1} \ar[rr]^{\up{2}\omega} && I^{r+1}P^{s-2} \oplus I^rJ^{s-1} \ar[rr]  && I^rP^{s-1}  \ar[rr] && 0                             }
\end{gathered}
\end{equation*}
Let $\up{1}A,\up{1}B$, and $\up{1}C$ be the minimal graded free resolutions of $I^{r+1}J^s,I^{r+1}P^{s-1}$, and $I^rJ^s$, resp. Let $\up{2}A,\up{2}B$, and $\up{2}C$ be the minimal graded free resolution of $I^{r+1}J^{s-1}$, $I^{r+1}P^{s-2}$, and $I^rJ^{s-1}$, resp. With the notations as above, we can choose $\up{1}A=\up{r+1}F\otimes_k \up{s}G, \up{1}C=\up{r}F\otimes_k \up{s}G, \up{2}A=\up{r+1}F\otimes_k \up{s-1}G$ and finally $\up{2}C=\up{r}F\otimes_k \up{s-1}G$. 


\textsf{Step 2a}: We show that there exist liftings $\up{1}\Omega, \up{2}\Omega, \Gamma, \Delta$ of $\up{1}\omega, \up{2}\omega, \gamma,\delta$ such that the following two maps $\up{1}A \to \up{2}B\oplus \up{2}C$ are equal: $\Delta\circ \up{1}\Omega=\up{2}\Omega \circ \Gamma$.
\begin{equation*}
\begin{gathered}
\xymatrixcolsep{5mm}
\xymatrixrowsep{5mm}
\xymatrix{
 \up{1}A \ar@{.>}[rrd] \ar[rrrr]^{\up{1}\Omega} \ar[ddd]^{\Gamma} &&  && \up{1}B\oplus \up{1}C \ar[rrrr] \ar@{.>}[d] \ar@/^4pc/[ddd]^{\Delta} && && \up{1}D \ar@{.>}[lld] \ar[ddd] \\
&& I^{r+1}J^s \ar[rr]^{\up{1}\omega} \ar[d]^{\gamma} && I^{r+1}P^{s-1} \oplus I^rJ^s \ar[rr] \ar[d]^{\delta} && I^rP^s \ar[d]  &&\\
&& I^{r+1}J^{s-1} \ar[rr]^{\up{2}\omega} && I^{r+1}P^{s-2} \oplus I^rJ^{s-1} \ar[rr]  && I^rP^{s-1}  &&    \\
 \up{2}A \ar@{.>}[rru] \ar[rrrr]^{\up{2}\Omega}  && &&    \up{2}B \oplus \up{2}C \ar[rrrr] \ar@{.>}[u]    &&    &&    \up{2}D \ar@{.>}[llu]               }
\end{gathered}
\end{equation*}
Let $\pp$ be the graded maximal ideal of $T$. Since $J$ is of small type, there is a lifting $\Gamma: \up{1}A\to \up{2}A$ of $\gamma$ such that $\Gamma(\up{1}A) \subseteq \pp (\up{2}A)$. In more details, choose a lifting $\eta_s:\up{s}G \to \up{s-1}G$ of $J^s\to J^{s-1}$ so that $\eta_s\otimes_S k=0$ and choose $\Gamma=\id\otimes_k \eta_s$. 

We will define $\up{1}\Omega, \Delta, \up{2}\Omega$ componentwise; see the next two diagrams. Let $\up{1}\omega_1: I^{r+1}J^s\to I^{r+1}P^{s-1}, \up{1}\omega_2: I^{r+1}J^s \to I^rJ^s$ be the restrictions of $\up{1}\omega$ to the first and second component of the image, resp. Similarly define $\up{2}\omega_1, \up{2}\omega_2$. Let $\delta_1:I^{r+1}P^{s-1}\to I^{r+1}P^{s-2}, \delta_2: I^rJ^s \to I^rJ^{s-1}$ be the components of $\delta$. By the induction hypothesis, $\delta_1$ is also Tor-vanishing, hence there is a lifting $\Delta_1: \up{1}B \to \up{2}B$ of it such that $\Delta_1(\up{1}B) \subseteq \pp (\up{2}B)$. Let $\Theta$ be a lifting of the map $\theta: I^{r+1}J^{s-1} \to I^{r+1}P^{s-1}$, then we have liftings $\up{1}\Omega_1=\Theta \circ \Gamma$ of $\up{1}\omega_1$ and $\up{2}\Omega_1=\Delta_1 \circ \Theta$ of $\up{2}\omega_1$. Moreover, $\Delta_1 \circ \up{1}\Omega_1=\up{2}\Omega_1 \circ \Gamma$.
\[
\xymatrix{
 I^{r+1}J^s \ar[rr]^{\up{1}\omega_1} \ar[d]^{\gamma} && I^{r+1}P^{s-1}  \ar[d]^{\delta_1} \\
I^{r+1}J^{s-1} \ar[rr]^{\up{2}\omega_1} \ar[rru]^{\theta} && I^{r+1}P^{s-2} 
}
\]
Since $I$ is of small type, for each $r$, we can choose a lifting $\rho_r: \up{r}F \to \up{r-1}F$ of $I^r\to I^{r-1}$ such that $\rho_r \otimes_R k=0$. Let $\up{1}\Omega_2=\rho_{r+1}\otimes_k \id: \up{1}A \to \up{1}C, \Delta_2=\id\otimes_k \eta_s: \up{1}C \to \up{2}C$ and $\up{2}\Omega_2= \rho_{r+1}\otimes_k \id: \up{2}A \to \up{2}C$, which are liftings of $\up{1}\omega_2, \delta_2$ and $\up{2}\omega_2$, resp. It is immediate that $\Delta_2 \circ \up{1}\Omega_2=\up{2}\Omega_2 \circ \Gamma$.
\[
\xymatrix{
 I^{r+1}J^s \ar[rr]^{\up{1}\omega_2} \ar[d]^{\gamma} && I^rJ^s  \ar[d]^{\delta_2} \\
I^{r+1}J^{s-1} \ar[rr]^{\up{2}\omega_2}  && I^rJ^{s-1} 
}
\]
Define $\up{i}\Omega$ by $\up{i}\Omega_1$ and $\up{i}\Omega_2$ for $i=1,2$, $\Delta=(\Delta_1, \Delta_2)$. Then one has $\Delta\circ \up{1}\Omega=\up{2}\Omega \circ \Gamma$, as desired.

\textsf{Step 2b}: Let $\up{1}D, \up{2}D$ be the mapping cones of $\up{1}\omega, \up{2}\omega$, resp. By the construction of $\up{1}\omega$ and $\up{2}\omega$, $\up{1}D, \up{2}D$ are minimal graded free resolutions of $I^rP^s$, $I^rP^{s-1}$, resp. The liftings $\Gamma$ and $\Delta$ together with the commutativity relation $\Delta\circ \up{1}\Omega=\up{2}\Omega \circ \Gamma$ yields a lifting $\chi: \up{1}D\to \up{2}D$ for $I^rP^s\to I^rP^{s-1}$. Since $\Gamma \otimes_T k=0$ and $\Delta \otimes_T k=0$, we deduce that $\chi\otimes_T k=0$. Hence $I^rP^s\to I^rP^{s-1}$ is Tor-vanishing, as desired. This finishes the induction and the proof of part (i).

(ii) can be proved similarly as (i).
\end{proof}


\section{Projective dimension and regularity}
\label{sect_depthreg}
From now on, unless stated otherwise, we fix the following notations: Let $(R,\mm)$ and $(S,\nn)$ be standard graded algebras over $k$. Let $I\subseteq R, J\subseteq S$ be non-zero proper homogeneous ideals. Denote $T=R\otimes_k S$ and let $P=I+J \subseteq T$ be the mixed sum of $I$ and $J$.

The main technical result of this section is Proposition \ref{prop_formula_depthreg}. We will deduce easily from Proposition \ref{prop_formula_depthreg} the asymptotic formulas for the projective dimension and the (Castelnuovo--Mumford) regularity of $P^s$ for $s\gg 0$, recovering in part results of \cite{HTT} (see Theorems \ref{thm_asymptoticproj_polynomial} and \ref{thm_asymptoticreg}). 
\subsection{Formulas}
\begin{prop}
\label{prop_formula_depthreg}
Assume that $I$ and $J$ are of small type. Assume further that $I^i, J^i\neq 0$ for all $i\ge 1$, e.g., $R$ and $S$ are reduced. Then for all $s\ge 1$, we have
\begin{align*}
\projdim_T P^s&=\max_{i \in [1,s-1], j\in [1,s]}\left\{\projdim_R I^{s-i}+\projdim_S J^i, \projdim_R I^{s-j+1}+\projdim_S J^j+1\right\},\\
\reg_T P^s&=\max_{i\in [1,s-1], j\in [1,s]}\left\{\reg_R I^{s-i}+\reg_S J^i, \reg_R I^{s-j+1}+\reg_S J^j-1\right\}.
\end{align*}
\end{prop}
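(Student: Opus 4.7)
The plan is to prove both formulas by iterating the Betti splittings supplied by Theorem \ref{thm_sumsofsmalldoublysmall}(i). That theorem gives, for every $i = 0, 1, \ldots, s-1$, a Betti splitting
\[
I^i P^{s-i} = I^{i+1} P^{s-i-1} + I^i J^{s-i},
\]
and Lemma \ref{lem_intersect_quotient} identifies the intersection of the two summands as $I^{i+1} J^{s-i}$. The hypothesis $I^r, J^r \neq 0$ for all $r \geq 1$ keeps every ideal appearing nontrivial, so Lemma \ref{lem_depthreg_Bettisplit} applies and yields
\[
\projdim_T I^i P^{s-i} = \max\bigl\{\projdim_T I^{i+1} P^{s-i-1},\ \projdim_T I^i J^{s-i},\ \projdim_T I^{i+1} J^{s-i} + 1\bigr\},
\]
together with the analogous identity for regularity (with $-1$ in place of $+1$).

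I would then cascade: starting from $\projdim_T P^s = \projdim_T I^0 P^s$ and repeatedly replacing the recursive argument $\projdim_T I^{i+1} P^{s-i-1}$ via the next instance of the identity, after $s$ steps one collects
\[
\projdim_T P^s = \max\Bigl(\{\projdim_T I^s\} \cup \bigcup_{i=0}^{s-1}\bigl\{\projdim_T I^i J^{s-i},\ \projdim_T I^{i+1} J^{s-i}+1\bigr\}\Bigr),
\]
and similarly for regularity. Since each mixed product $I^a J^b$ equals the tensor product $I^a \otimes_k J^b$ inside $T$, Lemma \ref{lem_tensor} converts every term on the right into a sum of an invariant over $R$ and one over $S$. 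A direct reindexing rewrites the two families as $\projdim_R I^{s-i} + \projdim_S J^i$ for $i \in [0, s]$ and $\projdim_R I^{s-j+1} + \projdim_S J^j + 1$ for $j \in [1, s]$, and likewise for $\reg$.

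To match the stated range $i \in [1, s-1]$, I would then check that the extremal cases $i = 0, s$ in the first family, together with the isolated contribution $\projdim_R I^s$ from the final step of the cascade, are dominated by suitable $+1$-type terms; this uses only $\projdim_R I, \projdim_S J \geq 0$, which hold trivially. For the regularity formula the analogous dominance needs $\reg_R I, \reg_S J \geq 1$, which holds because $I \subseteq \mm$ and $J \subseteq \nn$ are generated in positive degrees. The base case $s = 1$ collapses to the classical Betti splitting $P = I + J$ of Example \ref{ex_Bettisplittings}, where the range $[1, s-1]$ is empty and only the single $+1$-type term survives, recovering Proposition \ref{prop_invariants:mixedsum}(i). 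I expect the main delicacy to lie in the bookkeeping of the cascade---making sure the exponents line up after $s$ iterations and that no terms are missed---rather than in any conceptual obstruction.
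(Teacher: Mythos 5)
Your proof is correct and follows essentially the same path as the paper's: iterate the Betti splittings $I^rP^s = I^{r+1}P^{s-1} + I^rJ^s$ from Theorem \ref{thm_sumsofsmalldoublysmall}(i), apply Lemma \ref{lem_depthreg_Bettisplit} at each step, convert mixed products $I^aJ^b$ to $R$- and $S$-invariants via Lemma \ref{lem_tensor}, then discard the dominated endpoint terms. The only cosmetic difference is that the paper organizes the cascade as an induction on $s$ for the more general quantity $\projdim_T(I^rP^s)$ (with a free parameter $r$) and then specializes to $r=0$, whereas you unroll the recursion directly; the computations and the final dominance check (using $\projdim \geq 0$, $\reg I, \reg J \geq 1$ to drop the $i=0$ and $i=s$ terms) are the same.
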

\begin{proof}
By the proof of Theorem \ref{thm_sumsofsmalldoublysmall}, the decomposition $I^rP^s=I^{r+1}P^{s-1}+I^rJ^s$ is a Betti splitting and $I^{r+1}P^{s-1}\cap I^rJ^s=I^{r+1}J^s$. Hence by Lemma \ref{lem_depthreg_Bettisplit}
\[
\projdim_T (I^rP^s)=\max \{\projdim_T (I^{r+1}P^{s-1}),\projdim_T(I^rJ^s),\projdim_T(I^{r+1}J^s)+1\}.
\]
Together with the isomorphism $IJ\cong I\otimes_k J$ and Lemma \ref{lem_tensor}, we conclude that
\[
\projdim_T (I^rP^s)=\max \{\projdim_T (I^{r+1}P^{s-1}),\projdim_R(I^r)+\projdim_S(J^s),\projdim_R(I^{r+1})+\projdim_S(J^s)+1\}.
\]
By induction on $s\ge 1$, we get
\[
\projdim_T (I^rP^s)=\max_{i\in [0,s], j\in [1,s]}\left\{\projdim_R I^{r+s-i}+\projdim_S J^i, \projdim_R I^{r+s-j+1}+\projdim_S J^j+1\right\}.
\]
Setting $r=0$, we reduce to
\[
\projdim_T P^s=\max_{i\in [0,s], j\in [1,s]}\left\{\projdim_R I^{s-i}+\projdim_S J^i, \projdim_R I^{s-j+1}+\projdim_S J^j+1\right\}.
\]
The term corresponding to $i=s$ can be omitted since it is smaller than the term corresponding to $j=s$. Similarly we can omit the term corresponding to $i=0$, since it is smaller than the term corresponding to $j=1$. Hence we receive the desired formula for $\projdim_T P^s$. The formula for $\reg_T P^s$ is proved similarly.
\end{proof}
\begin{ex}
The following example shows that the equality for regularity in Proposition \ref{prop_formula_depthreg} may fail if one of the ideals $I$ and $J$ is not of small type, even if both of them are not nilpotent. We take a cue from the example in \cite[Remark 2.7]{Co}. Take $R=\mathbb{Q}[a,b,c]$, $I=(a^4,a^3b,ab^3,b^4,a^2b^2c^5)$ and $S=\mathbb{Q}[x,y,z,t]/(x^2-yz,xy,y^2,z^2,xt,yt,zt)$, $J=(x,t)$. (By abuse of notations, we denote the residue class of $x$ in $S$ by $x$ itself, and so on.) We will show that the following hold:
\begin{enumerate}
\item $I$ is of small type, $J$ is neither of small type nor nilpotent.
\item $\reg_R I=9,\reg_S J = 2$.
\item $\reg_T P^2=10 < 11 \le \max\{\reg_R I+\reg_S J, \reg_R I^2+\reg_S J-1, \reg_R I+\reg_S J^2-1\}$.
\end{enumerate}

(i) By Theorem \ref{thm_monomialidealsareofsmalltype}, $I$ is of small type. Clearly $J^n=(t^n)$ for $n\ge 3$, hence $J$ is not nilpotent. By computation with Macaulay2 \cite{GS}, $\beta_{1,3}(J)\neq 0$ while $\beta_{1,3}(J/J^2)=0$, so the map $\Tor^S_1(k,J)\to \Tor^S_1(k,J/J^2)$ is not injective. This shows that $J$ is not of small type. 

(ii) As mentioned above, $\beta_{1,3}(J)\neq 0$, hence $\reg_S J\ge 2$. The following is a Koszul filtration for $S$ (see \cite{CTV} for this notion): 
$$
\left\{(0), (y), (y,x), (y,x,t), (y,x,z), (y,x,z,t)\right\}.
$$ 
Hence $S$ is a Koszul algebra by \cite[Proposition 1.2]{CTV}. Since $S$ is a Koszul algebra, by \cite[Theorem 1.1]{AE},
\[
\reg_S S/J \le \reg S/J=1.
\] 
This yields $\reg_S J=2$. By computations with Macaulay2, $\reg_R I=9$.

(iii) Denote $U=P^2+(t)$, $V=U+(x)=I^2+J$. By direct computations, $P^2:t=I+(x,y,z,t)$ and $U:x=I+(x,y,t)$. Hence there are exact sequences
\[
0\to P^2 \to P^2+(t)=U \to \frac{(t)}{(t)\cap P^2} \cong \left(\frac{T}{I+(x,y,z,t)}\right)(-1) \to 0.
\]
and
\[
0\to U \to U+(x)=V \to \frac{(x)}{(x)\cap U} \cong \left(\frac{T}{I+(x,y,t)}\right)(-1) \to 0.
\]
By elementary arguments, for our purpose, it suffices to show that 
\begin{enumerate}[\quad \rm (a)]
\item $\reg_T (I+(x,y,z,t))=9, \reg_T (I+(x,y,t))=9$,
\item $\reg_T V=\reg_T (I^2+J)=9$.
\end{enumerate}  
From the above Koszul filtration for $S$, $\reg_S (x,y,t)=\reg_S (x,y,z,t)=1$, so (a) follows from Proposition \ref{prop_invariants:mixedsum} and the fact that $\reg_R I=9$. For the same reason we get (b) from the fact that $\reg_R I^2=8$ and $\reg_S J=2$.

Unfortunately, we could not find an example showing that the condition $I$ and $J$ are of small type is necessary for the equality of projective dimension in Proposition \ref{prop_formula_depthreg}.
\end{ex}
We are now able to deliver the
\begin{proof}[Proof of Theorem \ref{thm_depthreg}]
Since $\chara k=0$ or $I$ and $J$ are monomial, by Lemma \ref{lem_mapsofTor}(ii) and Theorem \ref{thm_monomialidealsareofsmalltype}, $I$ and $J$ are of small type. Using the Auslander--Buchsbaum formula, Proposition \ref{prop_formula_depthreg}, and the equality $\dim T=\dim R+\dim S$, we see that
\[
\depth P^s=\min_{i \in [1,s-1], j\in [1,s]}\left\{\depth I^{s-i}+\depth J^i, \depth I^{s-j+1}+\depth J^j-1\right\}.
\]
Since $I, J, P\neq (0)$, we get from $\depth R/I=\depth I-1$ the equality (i). 

As $R$ is a polynomial ring, for any finitely generated graded $R$-module $M$, $\reg M=\reg_R M$ \cite[Proposition, Page 89]{EG}. Arguing similarly as for the equality of depth, using the equality $\reg R/I=\reg I-1$ and Proposition \ref{prop_formula_depthreg}, we get (ii).
\end{proof}
\begin{ex}
\label{ex_nonpolynomialbase}
It is natural to ask whether Theorem \ref{thm_depthreg} is still true if $R$ and $S$ are not both polynomial rings. The answer is ``no''. Consider $R=\mathbb{Q}[a,b,c], I=(a^4,a^3b,ab^3,b^4,a^2b^2c^5)$, $S=\mathbb{Q}[x,y,z]/(x^2,xz,yz)$ and $J=(x+z)$. Computations with Macaulay2 \cite{GS} show that:
\begin{enumerate}[\quad \rm(1)]
\item $\depth (T/P^2)=1>0 = \depth(R/I)+\depth(S/J^2)$. Hence the equality (i) for depth of Theorem \ref{thm_depthreg} does not hold.
\item $\reg(T/P^2)=9 < 10= \reg(R/I)+\reg(S/J)+1$. Hence the equality (ii) for Castelnuovo--Mumford regularity of Theorem \ref{thm_depthreg} does not hold.
\end{enumerate}
On the other hands, both equalities of Proposition \ref{prop_formula_depthreg} are true, thanks to Proposition \ref{prop_formula_projreg_specialcases} below. Indeed, since $J^n=((x+z)^n)$ and $x+z$ is a zero-divisor, $\projdim_S J^n=\projdim_S J=\infty$ for all $n\ge 1$. Furthermore, observe that $0:(x+z)=(x)$ and $0:(x+z)^n=0:z^n=(x,y)$ for $n\ge 2$. Since the collection of ideals generated by residue classes of variables forms a Koszul filtration for $S$, $\reg_S J^n=n$ for all $n\ge 1$.

We may see from this example why Proposition \ref{prop_formula_depthreg}, which deals with projective dimension and regularity over $T$, is a natural generalization of Theorem \ref{thm_depthreg}.
\end{ex}

In some special cases, the formulas of Proposition \ref{prop_formula_depthreg} hold with the assumption that only one of $I$ and $J$ is of small type. The following result is a generalization of \cite[Proposition 2.9]{HTT}; the proof of the latter cannot be adapted to our situation.
\begin{prop}
\label{prop_formula_projreg_specialcases}
Suppose that $I^i, J^i\neq 0$ for all $i\ge 1$.
\begin{enumerate}[\quad \rm(i)]
\item Assume that $J$ is of small type and $\projdim_S J^{i-1} \le \projdim_S J^i$ for all $i\ge 1$. Then for all $s\ge 1$, we have $\projdim_T P^s=\max_{i\in [1,s]}\left\{\projdim_R I^{i}+\projdim_S J^{s-i+1}+1\right\}$.
\item Assume that $J$ is of small type and $\reg_S J^{i-1} +1 \le \reg_S J^i$ for all $i\ge 1$. Then for all $s\ge 1$, there is an equality 
$$
\reg_T P^s=\max_{i\in [1,s]}\left\{\reg_R I^i+\reg_R J^{s-i+1}-1\right\}.
$$
\item In particular, if $\reg_S J^i=i$ for all $i\ge 1$ then for all $s\ge 1$, we have an equality $\reg_T P^s=\max_{i\in [1,s]}\left\{\reg_R I^i-i\right\}+s$.
\end{enumerate}
\end{prop}
\begin{proof}
Following \cite[Page 450]{Avr1}, we say that a map of graded $R$-modules $M\xrightarrow{\phi} P$ is {\it small} if $\Tor^R_i(k,\phi)$ is injective for all $i\ge 0$. First we claim that if $J$ is of small type then for all $r\ge 0, s\ge 1$, the inclusion map $I^{r+1}P^{s-1} \to I^rP^s$ is small. Consider the following diagram with exact rows and denote $U=(I^rJ^s)/(I^{r+1}J^s) \cong (I^r/I^{r+1})\otimes_k J^s$.
\begin{equation*}
\begin{gathered}
\xymatrixcolsep{5mm}
\xymatrixrowsep{5mm}
\xymatrix{
0 \ar[rr] && I^{r+1}J^s \ar[rr] \ar[d]^{\gamma} && I^rJ^s  \ar[rr] \ar[d] && \frac{I^rJ^s}{I^{r+1}J^s} \ar[d] \ar[rr] && 0\\
0 \ar[rr] && I^{r+1}P^{s-1} \ar[rr] && I^rP^s \ar[rr]  && \frac{I^rJ^s}{I^{r+1}J^s}  \ar[rr] && 0                             }
\end{gathered}
\end{equation*}
The long exact sequence of homology induces the following commutative rectangle.
\begin{equation*}
\begin{gathered}
\xymatrixcolsep{5mm}
\xymatrixrowsep{5mm}
\xymatrix{
\Tor^T_{i+1}(k,U) \ar[d]^{=} \ar[rr] && \Tor^T_i(k,I^{r+1}J^s) \ar[d] \\
\Tor^T_{i+1}(k,U) \ar[rr] && \Tor^T_i(k,I^{r+1}P^{s-1})   }
\end{gathered}
\end{equation*}
Since $J$ is of small type, similarly to the proof of Theorem \ref{thm_sumsofsmalldoublysmall}, the map $I^{r+1}J^s\to I^{r+1}P^{s-1}$ is Tor-vanishing, hence the diagram yields that the connecting map on the second row is zero. In other words, $I^{r+1}P^{s-1} \to I^rP^s$ is small, as claimed.

(i) Since $J$ is of small type, by the claim, we deduce
\begin{align*}
\projdim_T (I^rP^s)&=\max\{\projdim_T (I^{r+1}P^{s-1}),\projdim_T U\}\\
&=\max\left\{\projdim_T (I^{r+1}P^{s-1}),\projdim_R (I^r/I^{r+1})+\projdim_S J^s\right\}.
\end{align*}
The second equality follows from Lemma \ref{lem_tensor}. Using induction on $s\ge 1$, we then obtain
\[
\projdim_T (I^rP^s)=\max_{i\in [0,s-1]}\left\{\projdim_R (I^{r+i}/I^{r+i+1})+\projdim_S J^{s-i}, \projdim_R I^{r+s}\right\}.
\]
Setting $r=0$, we get $\projdim_T P^s=\max_{i\in [0,s-1]}\left\{\projdim_R (I^i/I^{i+1})+\projdim_S J^{s-i}, \projdim_R I^s\right\}.$ For the desired equality, it remains to show that
\begin{equation*}
\max_{i\in [0,s-1]}\left\{\projdim_R (I^i/I^{i+1})+\projdim_S J^{s-i}, \projdim_R I^s\right\}=\max_{i\in [1,s]}\left\{\projdim_R I^i+\projdim_S J^{s-i+1}+1\right\}.
\end{equation*}
Since $\projdim_R (R/I)=\projdim_R I+1$, the term corresponding to $i=0$ on the left-hand side equals the term corresponding to $i=1$ on the right-hand side. Noting that 
\begin{align*}
&\projdim_R (I^i/I^{i+1})+\projdim_S J^{s-i}\\
 &\le \max\{\projdim_T I^i+\projdim_S J^{s-i},\projdim_R I^{i+1}+\projdim_S J^{s-i}+1\}\\
&\le \max\{\projdim_T I^i+\projdim_S J^{s-i+1}+1,\projdim_R I^{i+1}+\projdim_S J^{s-i}+1\}.
\end{align*}
for $1\le i\le s-1$, we see that the left-hand side is $\le $ the right-hand side.

Conversely, using 
\begin{align*}
&\projdim_R I^i+\projdim_S J^{s-i+1}+1 \\
 &\le \max\{\projdim_R I^{i-1}+\projdim_S J^{s-i+1}+1, \projdim_R (I^{i-1}/I^i)+\projdim_S J^{s-i+1}\} \\
&\le \max\{\projdim_R I^{i-1}+\projdim_S J^{s-i+2}+1, \projdim_R (I^{i-1}/I^i)+\projdim_S J^{s-i+1}\}.
\end{align*}
for $2\le i\le s$, we get the reverse inequality. This finishes the proof of (i).

(ii) Argue similarly as for part (i).

(iii) is a consequence of Proposition \ref{prop_idealswithKoszulpowers} and (ii). The proof is concluded.
\end{proof}

\subsection{Asymptotes}
Define the index of projective dimension stability of $I$ to be $\pstab(I)=\min\{r\ge 1: \projdim_R I^i=\lim_{s\to \infty} \projdim_R I^s ~\text{for all $i\ge r$}\}$. This is a well-defined finite number, e.g. by a result of Kodiyalam \cite[Corollary 8]{K1}.  
\begin{thm}
\label{thm_asymptoticproj_general}
Assume that $I$ and $J$ are of small type and $I^i, J^i\neq 0$ for all $i\ge 1$. Then for all $s\ge \pstab(I)+\pstab(J)$, we have
\[
\projdim_T P^s=\max\left\{\lim_{i\to \infty}\projdim_R I^i+\max_{j \ge 1}\projdim_S J^j+1 , \max_{i \ge 1}\projdim_R I^i+\lim_{j\to \infty}\projdim_S J^j+1\right\}.
\]
\end{thm}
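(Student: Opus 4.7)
The plan is to derive the asymptotic formula directly from the exact identity of Proposition \ref{prop_formula_depthreg}, which applies since $I$ and $J$ are of small type and $I^i, J^i \neq 0$ for all $i \geq 1$. Set $A := \lim_{i\to\infty}\projdim_R I^i$, $A^* := \max_{i\ge 1}\projdim_R I^i$, and analogously $B, B^*$; by definition of $\pstab(I)$, we have $\projdim_R I^i = A$ for every $i \geq \pstab(I)$, and whenever $A^* > A$ the maximum is attained at some $i_0 < \pstab(I)$, while if $A^* = A$ it is attained at $i_0 = \pstab(I)$. In either case there exists an index $i_0 \in [1,\pstab(I)]$ with $\projdim_R I^{i_0} = A^*$, and symmetrically for $B^*$. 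The goal is to show that under $s \geq \pstab(I)+\pstab(J)$ the maximum in Proposition \ref{prop_formula_depthreg} equals $M := \max\{A+B^*+1,\ A^*+B+1\}$.

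For the upper bound $\projdim_T P^s \leq M$, I would use a dichotomy: whenever $s \geq \pstab(I)+\pstab(J)$, any $i \in [1,s-1]$ satisfies either $s-i \geq \pstab(I)$ or $i \geq \pstab(J)$ (otherwise $s < \pstab(I) + \pstab(J)$). In the first case $\projdim_R I^{s-i} = A$, so the type-A term $\projdim_R I^{s-i}+\projdim_S J^i$ is at most $A + B^*$; in the second case $\projdim_S J^i = B$, giving a bound $A^*+B$. The same dichotomy applied to $j \in [1,s]$ bounds every type-B term $\projdim_R I^{s-j+1}+\projdim_S J^j + 1$ by $M$.

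For the lower bound, I would exhibit two type-B terms attaining each summand of $M$. Picking $j_0 \in [1,\pstab(J)]$ with $\projdim_S J^{j_0} = B^*$, the estimate $s - j_0 + 1 \geq \pstab(I) + 1 > \pstab(I)$ yields $\projdim_R I^{s-j_0+1} = A$, so the corresponding type-B term equals $A+B^*+1$. Symmetrically, choosing $i_0 \in [1,\pstab(I)]$ with $\projdim_R I^{i_0} = A^*$ and setting $j_1 := s-i_0+1$ places $j_1 \in [\pstab(J)+1,\,s]$; thus $\projdim_S J^{j_1} = B$ and the corresponding type-B term equals $A^*+B+1$. Combining the two bounds proves the stated equality.

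The entire argument is essentially combinatorial bookkeeping on top of Proposition \ref{prop_formula_depthreg}; the only mildly delicate point, which I view as the main obstacle, is verifying that the witnessing indices $j_0, j_1$ constructed from $B^*$ and $A^*$ lie in $[1,s]$ \emph{and} force the other factor into its stabilized regime. This is precisely what the hypothesis $s \geq \pstab(I) + \pstab(J)$ guarantees, and one should be careful with the edge case $A^*=A$ or $B^*=B$, where the maximum can be realized at the stability index itself rather than strictly below it.
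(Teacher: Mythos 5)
Your argument is correct and follows essentially the same route as the paper: both proofs reduce the claim to Proposition \ref{prop_formula_depthreg} and then use the hypothesis $s\ge\pstab(I)+\pstab(J)$ to force one of the two exponents into its stabilized regime; the paper evaluates $\max_{i\in[1,s]}\{\projdim_R I^{s-i}+\projdim_S J^i\}$ directly by splitting the index range at $\pstab(J)$, while you phrase the same bookkeeping as an upper bound (via the dichotomy $s-i\ge\pstab(I)$ or $i\ge\pstab(J)$) together with two explicit witnesses $j_0,j_1$ for the lower bound, but the substance is identical.
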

\begin{proof}
By Proposition \ref{prop_formula_depthreg} and its proof, we have
\[
\projdim_T P^s=\max_{i\in [1,s], j\in [1,s]}\left\{\projdim_R I^{s-i}+\projdim_S J^i,\projdim_R I^{s-j+1}+\projdim_S J^j+1\right\}.
\]
First we determine $\max_{i=1,\ldots,s}\left\{\projdim_R I^{s-i}+\projdim_S J^i\right\}$. If $\pstab(J)\le i\le s$ then $\projdim_S J^i=\lim_{j\to \infty} \projdim_S J^j$. For every such $i$, $0\le s-i\le s-\pstab(J)$ and $s-\pstab(J)\ge \pstab(I)$, hence
\[
\max_{\pstab(J)\le i\le s}\projdim_R I^{s-i}=\max_{i=0,\ldots,\pstab(I)}\projdim_R I^i=\max_{i\ge 1} \projdim_R I^i.
\]
So we obtain 
\[
\max_{i=\pstab(J),\ldots,s}\left\{\projdim_R I^{s-i}+\projdim_S J^i\right\}=\max_{i\ge 1} \projdim_R I^i+\lim_{j\to \infty} \projdim_S J^j.
\]
If $1\le i\le \pstab(J)$ then $s-i\ge \pstab(I)$, hence $\projdim_R I^{s-i}=\lim_{i\to \infty} \projdim_R I^i$. Thus
\begin{align*}
\max_{i=1,\ldots,\pstab(J)}\left\{\projdim_R I^{s-i}+\projdim_S J^i\right\}&=\lim_{i\to \infty} \projdim_R I^i+ \max_{j=1,\ldots,\pstab(J)}\projdim_S J^j \\
&=\lim_{i\to \infty} \projdim_R I^i+ \max_{j \ge 1}\projdim_S J^j.
\end{align*}
Summing up 
\begin{align*}
&\max_{i=1,\ldots,s}\left\{\projdim_R I^{s-i}+\projdim_S J^i\right\}=\\
&\max\left\{\max_{i\ge 1} \projdim_R I^i+\lim_{j\to \infty} \projdim_S J^j, \lim_{i\to \infty} \projdim_R I^i+ \max_{j \ge 1}\projdim_S J^j\right\}.
\end{align*}
Similarly, we have
\begin{align*}
&\max_{j=1,\ldots,s}\left\{\projdim_R I^{s-j+1}+\projdim_S J^j+1\right\}=\\
&\max\left\{\max_{i\ge 1} \projdim_R I^i+\lim_{j\to \infty}\projdim_S J^j+1, \lim_{i\to \infty} \projdim_R I^i+ \max_{j \ge 1}\projdim_S J^j+1\right\}.
\end{align*}
This yields the desired equality for $\projdim_T P^s$.
\end{proof}
The index of depth stability of $I$, denoted by $\dstab(I)$, is the least number $r$ such that $\depth I^i=\lim\limits_{s\to \infty} \depth I^s$ for all $i\ge r$. The next theorem recovers partly \cite[Theorem 4.6]{HTT} with an easier argument.
\begin{thm}
\label{thm_asymptoticproj_polynomial}
Assume that $R$ and $S$ are polynomial rings over $k$, and either $\chara k=0$ or $I$ and $J$ are monomial ideals. Then for all $s\ge \dstab(I)+\dstab(J)$, we have
\[
\projdim_T P^s=\max\left\{\lim_{i\to \infty}\projdim_R I^i+\max_{j \ge 1}\projdim_S J^j+1 , \max_{i \ge 1}\projdim_R I^i+\lim_{j\to \infty}\projdim_S J^j+1\right\}.
\]
\end{thm}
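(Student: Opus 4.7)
The plan is to reduce this statement directly to Theorem \ref{thm_asymptoticproj_general}, which has essentially the same conclusion but with the stability threshold expressed in terms of $\pstab$ rather than $\dstab$. The only real content is verifying that the two thresholds agree when the base rings are polynomial.

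First I would check that the hypotheses of Theorem \ref{thm_asymptoticproj_general} are fulfilled. The fact that $I$ and $J$ are of small type is immediate under either alternative: in characteristic zero we invoke Lemma \ref{lem_mapsofTor}(ii), and in the monomial case we invoke Theorem \ref{thm_monomialidealsareofsmalltype}. Since $R$ and $S$ are polynomial rings, they are domains, so all positive powers $I^i, J^i$ are non-zero. Thus Theorem \ref{thm_asymptoticproj_general} applies and yields the asymptotic formula for all $s \ge \pstab(I)+\pstab(J)$.

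Next I would observe that in a polynomial ring every finitely generated module has finite projective dimension, so by the Auslander--Buchsbaum formula
\[
\projdim_R I^i = \depth R - \depth I^i
\]
for all $i \ge 1$, and similarly over $S$. Consequently the sequence $(\projdim_R I^i)_{i\ge 1}$ stabilizes precisely when $(\depth I^i)_{i \ge 1}$ does, and at the same index; in particular $\pstab(I)=\dstab(I)$ and $\pstab(J)=\dstab(J)$. Hence the threshold $\pstab(I)+\pstab(J)$ appearing in Theorem \ref{thm_asymptoticproj_general} coincides with $\dstab(I)+\dstab(J)$, which gives the stated conclusion.

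There is no real obstacle here; the theorem is essentially a translation of Theorem \ref{thm_asymptoticproj_general} via Auslander--Buchsbaum, with the small-type hypothesis supplied by either \cite[Theorem 2.5(i)]{A} or Theorem \ref{thm_monomialidealsareofsmalltype}. The only point requiring a moment's care is the verification that $\pstab$ and $\dstab$ agree index-by-index (not merely that both sequences eventually stabilize), and this follows from Auslander--Buchsbaum applied term-wise.
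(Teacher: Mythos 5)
Your argument is exactly the paper's: verify small type via Lemma \ref{lem_mapsofTor}(ii) or Theorem \ref{thm_monomialidealsareofsmalltype}, note that $\pstab=\dstab$ by Auslander--Buchsbaum over the regular rings $R$ and $S$, and invoke Theorem \ref{thm_asymptoticproj_general}. Your explicit remark that polynomial rings are domains (so $I^i,J^i\neq 0$) is a point the paper leaves implicit, but otherwise the two proofs coincide.
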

\begin{proof}
Since $R, S, T$ are regular rings, by the Auslander--Buchsbaum formula, we have equalities $\pstab(I)=\dstab(I), \pstab(J)=\dstab(J)$. Moreover, by Lemma \ref{lem_mapsofTor}(ii) and Theorem \ref{thm_monomialidealsareofsmalltype}, $I$ and $J$ are of small type. Hence the result follows from Theorem \ref{thm_asymptoticproj_general}.
\end{proof}

Assume that $R$ and $S$ are polynomial rings over $k$. By the result due of Kodiyalam \cite{K} and Cutkosky--Herzog--N.V. Trung \cite{CHT}, we can define the regularity stabilization index of $I$, denoted $\rstab(I)$, as follows
\[
\min\{r\ge 1: \text{there exist constants $e$ and $g$ such that}~ \reg I^s=es+g ~\text{for all $s\ge r$}\}.
\]
Note that in \cite{HTT}, the notation $\text{lin}(I)$ was used in place of $\rstab(I)$. The next result is a special case of \cite[Proposition 5.7]{HTT} but its proof is easier.
\begin{thm}
\label{thm_asymptoticreg}
Assume that $R$ and $S$ are polynomial rings over $k$. Assume in addition that either $\chara k=0$ or $I$ and $J$ are monomial ideals. Suppose that $\reg I^r=er+g$ for all $r\ge \rstab(I)$ and $\reg J^s=fs+h$ for all $s\ge \rstab(J)$, and moreover $e\ge f$. Denote $g^*=\max_{i=1,\ldots,\rstab(I)}(\reg I^i-fi)$ and $h^*=\max_{j=1,\ldots,\rstab(J)}(\reg J^j-ej)$. Then for all $s\ge \rstab(I)+\rstab(J)$, we have
\[
\reg P^s=\max\left\{e(s+1)+g+h^*,f(s+1)+g^*+h\right\}-1.
\]
\end{thm}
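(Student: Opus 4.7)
Since $R$ and $S$ are polynomial rings and we assume either $\chara k = 0$ or that $I$ and $J$ are monomial, Lemma \ref{lem_mapsofTor}(ii) and Theorem \ref{thm_monomialidealsareofsmalltype} ensure that $I$ and $J$ are of small type. As $R$ and $S$ are reduced, all positive powers of $I$ and $J$ are nonzero, so Proposition \ref{prop_formula_depthreg} applies. Setting
\[
N(t) := \max_{i \in [1,\,t-1]} \bigl\{\reg I^{t-i} + \reg J^i\bigr\},
\]
the substitution $i = t-j$ identifies the second maximum appearing in that proposition (indexed by $j \in [1, s]$) with $N(s+1)$. Thus, using $\reg = \reg_T$ over the polynomial ring $T$,
\[
\reg P^s = \max\{N(s),\, N(s+1) - 1\}.
\]

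Next I evaluate $N(s)$ for $s \ge \rstab(I) + \rstab(J)$ by partitioning $i \in [1, s-1]$ into three ranges: (a) $1 \le i \le \rstab(J) - 1$, where $\reg I^{s-i} = a(s-i)+g$ is already in the linear regime; (b) $\rstab(J) \le i \le s - \rstab(I)$, where both $\reg I^{s-i}$ and $\reg J^i$ are linear, so the sum equals $(b-a)i + as + g + h$; and (c) $s - \rstab(I) + 1 \le i \le s - 1$, which is symmetric to (a). Because $a \ge b$, in range (b) the sum is nonincreasing in $i$, attaining its maximum at $i = \rstab(J)$ with value $as + g + (\reg J^{\rstab(J)} - a\,\rstab(J))$ and its minimum at $i = s - \rstab(I)$ with value $bs + h + (\reg I^{\rstab(I)} - b\,\rstab(I))$. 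Combining range (a) with the left endpoint of (b) and using the definition of $h^*$ gives a maximum of $as + g + h^*$; symmetrically, combining (c) with the right endpoint of (b) and the definition of $g^*$ gives $bs + h + g^*$. Therefore
\[
N(s) = \max\{as + g + h^*,\, bs + h + g^*\}.
\]

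To finish, since $I$ and $J$ are nonzero proper homogeneous ideals in polynomial rings, the maximum generator degree of $I^r$ is at least $r$, forcing $\reg I^r \ge r$ for all $r \ge 1$ and hence $a \ge 1$; similarly $b \ge 1$. A direct two-case comparison (depending on whether $as + g + h^* \ge bs + h + g^*$) yields $N(s+1) - N(s) \ge \min\{a, b\} = b \ge 1$, so $N(s+1) - 1 \ge N(s)$, and
\[
\reg P^s = N(s+1) - 1 = \max\{a(s+1) + g + h^*,\, b(s+1) + g^* + h\} - 1,
\]
as claimed. The main obstacle is the bookkeeping in the three-regime decomposition: one needs the interior of regime (b) to contribute nothing new (which uses $a \ge b$ to force the linear function $(b-a)i + as + g + h$ to attain its maximum at an endpoint), and one needs the two endpoints of (b) to be exactly the terms missing from the maxima in regimes (a) and (c) so that the full definitions of $h^*$ and $g^*$ (which include the indices $\rstab(J)$ and $\rstab(I)$) can be invoked.
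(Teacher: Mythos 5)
Your proof is correct and follows essentially the same route as the paper's sketch: invoke Proposition \ref{prop_formula_depthreg} (after noting $I,J$ are of small type), split the resulting max into regimes where the regularities are linear versus not, use $a\ge b$ to push the interior of the linear regime to its endpoints, and observe that $a,b\ge 1$ forces the ``$j$-maximum'' term (your $N(s+1)-1$) to dominate. The only quibble is cosmetic: identifying the second maximum in Proposition \ref{prop_formula_depthreg} with $N(s+1)-1$ needs no substitution at all (set $t=s+1$ and rename $j$ to $i$); the ``substitution $i=t-j$'' you mention also works by symmetry of the index range but is an unnecessary detour.
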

\begin{proof}[Sketch of proof]
By Lemma \ref{lem_mapsofTor}(ii) and Theorem \ref{thm_monomialidealsareofsmalltype}, $I$ and $J$ are of small type. Hence applying Proposition \ref{prop_formula_depthreg} and its proof, we have
\begin{equation}
\label{eq_regIrPsmax}
\reg P^s= \max_{i\in [1,s], j\in [1,s]}\left\{\reg I^{s-i}+\reg J^i,\reg I^{s-j+1}+\reg J^j-1\right\}.
\end{equation}
First we show that
\begin{equation}
\label{eq_max}
\max_{i=1,\ldots,s}\left\{\reg I^{s-i}+\reg J^i\right\}=\max\left\{es+g+h^*, fs+g^*+h\right\}.
\end{equation}
The argument runs along the line of the proof of Theorem \ref{thm_asymptoticproj_general}. Similarly to \eqref{eq_max}, we have
\begin{align}
\label{eq_max2}
&\max_{i=1, \ldots, s}\left\{\reg I^{s-i+1}+\reg J^i-1\right\}= \nonumber\\ 
&\max\left\{e(s+1)+g+h^*-1, f(s+1)+g^*+h-1\right\}.
\end{align}
Combining Equalities \eqref{eq_regIrPsmax}, \eqref{eq_max} and \eqref{eq_max2}, we get the desired equality. 
\end{proof}
\subsection{Applications}
\label{subsect_applications}
 In this subsection, assume that $R$ and $S$ are polynomial rings over $k$. Recall that the depth function of $I$ is given by $r\mapsto \depth(R/I^r)$ for $r\ge 1$. By Brodmann's theorem \cite{Br}, the sequence of values of any depth function is eventually constant. We say that $I$ {\it has a constant depth function} if $(\depth(R/I^r))_{r\ge 1}$ is a constant sequence. One of the main applications of \cite{HTT} is the following
\begin{prop}[H\`a, Trung and Trung, {\cite[Proposition 4.7]{HTT}}]
\label{prop_constantdepth}
Let $I$ and $J$ be squarefree monomial ideals of $R$ and $S$, resp. Then $I+J$ has a constant depth function if and only if both $I$ and $J$ do.
\end{prop}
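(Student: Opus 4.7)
The plan is to push the proof through the depth formula of Theorem~\ref{thm_depthreg}(i), which applies because $I$ and $J$ are monomial. Write $d_I=\depth R/I$ and $d_J=\depth S/J$. For the ``if'' direction, if $\depth R/I^n\equiv d_I$ and $\depth S/J^n\equiv d_J$, then a direct substitution into the formula shows that every $i$-term equals $d_I+d_J+1$ and every $j$-term equals $d_I+d_J$, so $\depth T/P^s=d_I+d_J$ for all $s\ge 2$; the case $s=1$ follows from Proposition~\ref{prop_invariants:mixedsum}(i). Hence $P$ has a constant depth function.

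For the ``only if'' direction, assume $\depth T/P^s\equiv c$. Setting $s=1$ gives $c=d_I+d_J$. Taking $j=1$ and $j=s$ in the formula of Theorem~\ref{thm_depthreg}(i) yields the lower bounds $\depth R/I^s\ge d_I$ and $\depth S/J^s\ge d_J$ for every $s\ge 1$. Consequently every $i$-term $\depth R/I^{s-i}+\depth S/J^i+1$ is at least $c+1$, so the minimum $c$ must be attained by some $j$-term $\depth R/I^{s-j+1}+\depth S/J^j$; since both summands are at least their baselines, equality forces both $\depth R/I^{s-j+1}=d_I$ and $\depth S/J^j=d_J$. In this way one obtains, for every $s\ge 1$, a pair $(p,q)$ of positive integers with $p+q=s+1$, $\depth R/I^p=d_I$, and $\depth S/J^q=d_J$.

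To upgrade this ``sum-pair'' information I would first invoke the Herzog--Hibi inequality $\lim_{s\to\infty}\depth R/I^s\le d_I$ for squarefree monomial $I$, which combined with the lower bound above and Brodmann's theorem gives $\depth R/I^s=d_I$ for all $s\gg 0$ (and symmetrically for $J$); thus the defect sets $A^{c}=\{s:\depth R/I^s>d_I\}$ and $B^{c}=\{s:\depth S/J^s>d_J\}$ are both finite. The main obstacle is that the purely numerical data---sum-pair condition plus finiteness of $A^{c}$ and $B^{c}$---does \emph{not} by itself force $A^{c}=B^{c}=\emptyset$: for example, $A=\N_{\ge 1}\setminus\{m\}$ and $B=\N_{\ge 1}$ satisfy all the numerical constraints listed. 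The squarefree monomial hypothesis must therefore be used one more time to rule out such ``isolated spike'' defect patterns. I would attack this final step by examining the smallest $m\in A^{c}\cup B^{c}$ (WLOG $m\in A^{c}$), iterating the sum-pair condition at $s=m,m+1,\ldots$ to pin down which indices are forced to lie in $A$, and combining with structural results on depth functions of powers of squarefree monomial ideals---such as the persistence of associated primes of Herzog--Qureshi or a Takayama-type combinatorial description of $\depth R/I^s$---to derive a contradiction unless $A^{c}=B^{c}=\emptyset$.
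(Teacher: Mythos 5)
Your ``if'' direction is correct and matches the paper. Your ``only if'' argument correctly extracts the lower bound $\depth R/I^s\ge d_I$ (and $\depth S/J^s\ge d_J$) for all $s$ from Theorem~\ref{thm_depthreg}(i), and correctly observes that for each $s$ some $j$-term is minimizing. The gap comes at the end: you treat the Herzog--Takayama--Terai inequality as an \emph{asymptotic} statement $\lim_{s\to\infty}\depth R/I^s\le d_I$, and you are then left fighting phantom ``isolated spikes'' with heavy machinery (persistence of associated primes, Takayama-type formulas). But $\depth R/L \le \depth R/\sqrt{L}$ (\cite[Proof of Theorem 2.6]{HeTT}) is a statement about each monomial ideal $L$; applied with $L=I^s$ and using $\sqrt{I^s}=I$ (squarefree!), it gives the \emph{pointwise} upper bound $\depth R/I^s\le d_I$ for every $s\ge 1$, not merely in the limit. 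Combined with your lower bound $\depth R/I^s\ge d_I$, you get $\depth R/I^s=d_I$ for all $s$ immediately, with no need for Brodmann's theorem, no defect sets, and no combinatorial description of depth. (Note also the result is usually attributed to Herzog--Takayama--Terai rather than Herzog--Hibi.)

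For comparison, the paper's own argument is even shorter and does not bother with the lower bound. It notes that for each $j\in[1,s]$ the $j$-term satisfies
\[
\depth T/P^s \;\le\; \depth R/I^{s-j+1}+\depth S/J^j \;\le\; d_I+d_J \;=\; \depth T/P,
\]
using the radical bound on both summands; the constant-depth hypothesis then forces every link in this chain to be an equality, whence $\depth R/I^{s-j+1}=d_I$ and $\depth S/J^j=d_J$ for all $j\in[1,s]$. Since $s$ is arbitrary, both $I$ and $J$ have constant depth functions. Both routes are correct, but the paper's version avoids the lower-bound detour and is the cleaner one to imitate.
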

For squarefree monomial ideals, the last result simplifies significantly the equivalence (i) $\Leftrightarrow$ (ii) of \cite[Theorem 1.1]{HV}, which has to require that the Rees algebras of $I$ and $J$ are Cohen--Macaulay. Thanks to Theorem \ref{thm_depthreg}, we obtain the following improvement of (the ``if'' part of) Proposition \ref{prop_constantdepth}. 
\begin{cor}
\label{cor_constantdepth} 
If either $\chara k=0$, or $I$ and $J$ are \textup{(}possibly non-squarefree\textup{)} monomial ideals, then $I+J$ has a constant depth function if $I$ and $J$ do. If $I$ and $J$ are squarefree monomial ideals, then conversely, if $I+J$ has a constant depth function then both $I$ and $J$ do.
\end{cor}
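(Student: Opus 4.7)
The plan is to derive both directions of the corollary directly from Theorem~\ref{thm_depthreg}'s explicit formula for $\depth T/P^s$, which applies in either situation since squarefree monomial ideals are in particular monomial.

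For the ``if'' direction, suppose $\depth R/I^a = \depth R/I$ and $\depth S/J^a = \depth S/J$ for every $a \geq 1$. Substituting these constants into Theorem~\ref{thm_depthreg}(i) collapses every summand on the right-hand side to one of $\depth R/I + \depth S/J$ or $\depth R/I + \depth S/J + 1$. The minimum is therefore $\depth R/I + \depth S/J$, which is independent of $s$. (For $s = 1$ the index range $[1, s-1]$ is empty, so only the $j = 1$ term in the second family survives, and it also equals $\depth R/I + \depth S/J$.) Hence $P$ has constant depth function, giving the ``if'' direction under either hypothesis of the corollary.

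For the ``only if'' direction, suppose $I$ and $J$ are squarefree monomial and $\depth T/P^s$ is constant in $s$. The $s = 1$ case identifies this constant as $c = \depth R/I + \depth S/J$. For arbitrary $s$, Theorem~\ref{thm_depthreg}(i) writes $c = \depth T/P^s$ as a minimum, so each individual term on the right must be at least $c$. Selecting the $j = 1$ term in the second family yields
\[
\depth R/I^s + \depth S/J \geq c = \depth R/I + \depth S/J,
\]
hence $\depth R/I^s \geq \depth R/I$ for every $s \geq 1$. Symmetrically, the $j = s$ term gives $\depth S/J^s \geq \depth S/J$ for every $s$.

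The main obstacle is the reverse inequality $\depth R/I^s \leq \depth R/I$ (and likewise for $J$), which is where the squarefree hypothesis is essential and is the sole input required beyond Theorem~\ref{thm_depthreg}; I would invoke it as a known property of squarefree monomial ideals from the literature on powers of monomial ideals. Combining it with the lower bounds derived above forces $\depth R/I^s = \depth R/I$ and $\depth S/J^s = \depth S/J$ for all $s \geq 1$, so both $I$ and $J$ have constant depth functions, completing the proof.
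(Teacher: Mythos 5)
Your proof is correct and follows essentially the same strategy as the paper: derive the lower bound $\depth R/I^s \ge \depth R/I$ (and similarly for $J$) from Theorem~\ref{thm_depthreg}(i), and obtain the matching upper bound from the squarefree hypothesis. The only gap is that you do not name the source of the upper bound; the paper invokes the inequality $\depth R/\sqrt{L} \ge \depth R/L$ for a monomial ideal $L$ \textup{(}Herzog--Takayama--Terai, \textup{\cite[Proof of Theorem 2.6]{HeTT}}\textup{)}, which for squarefree $I$ gives $\depth R/I = \depth R/\sqrt{I^s} \ge \depth R/I^s$ exactly as you need.
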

Our proof of the second part of this result is more straightforward than the original proof of \cite[Proposition 4.7]{HTT}. As noted in \cite[Example 4.8]{HTT}, the second statement of Corollary \ref{cor_constantdepth} does not hold if $I$ and $J$ are non-squarefree monomial ideals, even in characteristic zero.
\begin{proof}[Proof of Corollary \ref{cor_constantdepth}]
The first statement is immediate from Theorem \ref{thm_depthreg}(i).

For the second statement, we use the inequality $\depth R/\sqrt{L} \ge \depth R/L$ for $L$ being a monomial ideal of $R$ \cite[Proof of Theorem 2.6]{HeTT}. In particular, since $I$ and $J$ are squarefree monomial ideals, $\depth R/I \ge \depth R/I^s$ and $\depth S/J \ge \depth S/J^s$ for all $s\ge 1$. By Theorem \ref{thm_depthreg}(i), $\depth T/P=\depth R/I+\depth S/J$ and for all $n\ge 2$,

$\depth T/P^s=$
\begin{align*}
&\min_{i\in [1,s-1],j\in [1,s]} \left\{\depth R/I^i+\depth S/J^{s-i+1}+1,\depth R/I^j+\depth S/J^{s-j}\right\}\\
             & \le \min_{j\in [1,s]}\{\depth R/I^j+\depth S/J^{s-j}\}\\
             & \le \depth R/I+\depth S/J =\depth T/P.
\end{align*}
Since $P=I+J$ has a constant depth function, we conclude that $\depth I^s=\depth I$ and $\depth J^s=\depth J$ for all $s\ge 1$. The proof is finished.
\end{proof}
An upper bound for the index of depth stability of $P$ in terms of those of $I$ and $J$ was not known, according to \cite[page 821, paragraph after Theorem 5.6]{HTT}. With the assumptions of Theorem \ref{thm_depthreg}, we are able to provide such a bound. Theorems \ref{thm_asymptoticproj_polynomial} and \ref{thm_asymptoticreg} yield the following inequalities. The second of these inequalities also provides a slightly better upper bound for $\rstab(I+J)$ than \cite[Corollary 5.8]{HTT}.
\begin{cor}
Assume that either $\chara k=0$ or $I$ and $J$ are monomial ideals. Then $\dstab(I+J) \le \dstab(I)+\dstab(J)$.
If furthermore, in the notations of Theorem \ref{thm_asymptoticreg}, we have $e=f$, then $\rstab(I+J) \le \rstab(I)+\rstab(J)$.
\end{cor}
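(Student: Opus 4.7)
The plan is that both inequalities follow almost immediately from Theorems \ref{thm_asymptoticproj_polynomial} and \ref{thm_asymptoticreg} applied to the mixed sum $P=I+J$, once one observes that the right-hand sides of the asymptotic formulas stated there are expressions that do not depend on $s$ (for the depth) or are affine linear in $s$ with a single slope (for the regularity, in the case $a=b$).

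For the first inequality, I would begin by noting that $T=R\otimes_k S$ is a polynomial ring, so the Auslander--Buchsbaum formula gives
\[
\depth(T/P^s)=\dim T-\projdim_T(T/P^s)=\dim T-\projdim_T P^s-1
\]
for every $s\ge 1$. Hence $\dstab(P)$ is precisely the least integer $r\ge 1$ such that $\projdim_T P^s$ takes a constant value (equal to its asymptotic value) for all $s\ge r$. By Theorem \ref{thm_asymptoticproj_polynomial}, the expression for $\projdim_T P^s$ displayed there is manifestly independent of $s$ once $s\ge \dstab(I)+\dstab(J)$, so $\dstab(P)\le \dstab(I)+\dstab(J)$.

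For the second inequality, specializing Theorem \ref{thm_asymptoticreg} to $a=b$ collapses the maximum of the two linear functions, yielding
\[
\reg P^s=a(s+1)+\max\{g+h^{*},\, g^{*}+h\}-1
\]
for all $s\ge \rstab(I)+\rstab(J)$. This is an affine linear function of $s$ with slope $a$, so by definition $\rstab(P)\le \rstab(I)+\rstab(J)$.

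There is no real obstacle here: the technical work has been absorbed into Theorems \ref{thm_asymptoticproj_polynomial} and \ref{thm_asymptoticreg}, and the corollary is a direct readout of the shape of their formulas. The only delicate point worth flagging in the write-up is why the assumption $a=b$ is used in the second half: without it, the maximum of two linear expressions with different slopes in $s$ is only guaranteed to coincide with the larger of them for $s$ sufficiently large, so one would not be able to conclude stabilization of linear behavior at the bound $\rstab(I)+\rstab(J)$ from the stated formula alone.
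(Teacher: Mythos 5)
Your proposal is correct and is essentially the paper's own argument: the paper simply remarks that the corollary "follows from Theorems \ref{thm_asymptoticproj_polynomial} and \ref{thm_asymptoticreg}," and you have spelled out the routine bookkeeping (Auslander--Buchsbaum to pass between $\depth$ and $\projdim$, and the observation that the displayed formulas are $s$-independent, respectively affine linear in $s$ of a single slope once $a=b$). Your side remark about why $a=b$ is needed is exactly the right caveat.
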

The following statement is possibly of independent interest.
\begin{cor}
\label{cor_depthregpowers}
Assume that either $\chara k=0$ or $I$ is a monomial ideal. Then for all $s\ge 1$, the map $\Tor^R_i(k,I^s) \to \Tor^R_i(k,I^{s-1})$ is zero for every $i\ge 0$. Moreover, there are equalities
\begin{align*}
\depth(I^{s-1}/I^s)&=\min\{\depth I^{s-1}, \depth I^s-1\},\\
\reg (I^{s-1}/I^s)&=\max\{\reg I^{s-1}, \reg I^s-1\}.
\end{align*}
\end{cor}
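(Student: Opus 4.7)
The proof breaks naturally into two phases: first establish the Tor-vanishing of the inclusion $I^s \hookrightarrow I^{s-1}$, then feed this into the exact sequence machinery of Section~\ref{sect_Torvanishing_andinvariants}.

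\medskip

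\textbf{Step 1: Tor-vanishing.} Under either hypothesis, $I$ is of small type: if $\chara k=0$, apply Lemma~\ref{lem_mapsofTor}(ii), while if $I$ is a monomial ideal, apply Theorem~\ref{thm_monomialidealsareofsmalltype}. By the very definition of ``of small type,'' the natural map $I^s\to I^{s-1}$ induces the zero map $\Tor^R_i(k,I^s)\to \Tor^R_i(k,I^{s-1})$ for every $i\ge 0$ and every $s\ge 1$. This proves the first assertion.

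\medskip

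\textbf{Step 2: Translating via Lemma~\ref{lem_Tor-vanishingandinvariants}.} Consider the short exact sequence
\[
0\longrightarrow I^s \xlongrightarrow{\phi} I^{s-1} \longrightarrow I^{s-1}/I^s \longrightarrow 0,
\]
where $\phi$ is the natural inclusion. Since $R$ is a polynomial ring and $I\neq 0$, none of the three modules is zero. By Step~1, $\phi$ is Tor-vanishing, so Lemma~\ref{lem_Tor-vanishingandinvariants}(i) yields
\begin{align*}
\projdim_R (I^{s-1}/I^s) &=\max\{\projdim_R I^{s-1},\,\projdim_R I^s+1\},\\
\reg_R (I^{s-1}/I^s)     &=\max\{\reg_R I^{s-1},\,\reg_R I^s-1\}.
\end{align*}

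\medskip

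\textbf{Step 3: Conversion to depth and Castelnuovo--Mumford regularity.} Since $R$ is a polynomial ring, the Auslander--Buchsbaum formula $\depth M=\dim R-\projdim_R M$ applies to every finitely generated graded $R$-module, and converts $\max$ into $\min$:
\[
\depth(I^{s-1}/I^s)=\min\{\depth I^{s-1},\,\depth I^s-1\}.
\]
For regularity, recall that over a polynomial ring one has $\reg M=\reg_R M$ for every finitely generated graded $M$ (the fact invoked in the proof of Theorem~\ref{thm_depthreg}), and thus the second displayed equality of Step~2 gives the desired formula for $\reg(I^{s-1}/I^s)$.

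\medskip

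There is essentially no obstacle: once Step~1 identifies $\phi$ as Tor-vanishing, the remaining content is Lemma~\ref{lem_Tor-vanishingandinvariants}(i) applied to the canonical three-term sequence, combined with the standard dictionary (Auslander--Buchsbaum, Eisenbud--Goto) available for polynomial rings. The only sanity check worth noting is the boundary case $s=1$, where $I^{0}/I=R/I$ and the equality $\depth(R/I)=\min\{\depth R,\depth I-1\}=\depth I-1$ is consistent with the well-known $\depth R/I=\depth I-1$.
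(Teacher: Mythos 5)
Your proof is correct and follows exactly the same route as the paper's (terse) proof: invoke Lemma~\ref{lem_mapsofTor}(ii) or Theorem~\ref{thm_monomialidealsareofsmalltype} to see that $I$ is of small type, then apply Lemma~\ref{lem_Tor-vanishingandinvariants}(i) to the sequence $0\to I^s\to I^{s-1}\to I^{s-1}/I^s\to 0$ and translate via Auslander--Buchsbaum. The only additions in your write-up are the (correct) sanity checks that all three modules are nonzero and that $\reg=\reg_R$ over a polynomial ring, which the paper leaves implicit.
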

\begin{proof}
The first statement follows from combining Lemma \ref{lem_mapsofTor}(ii) and Theorem \ref{thm_monomialidealsareofsmalltype}. The equalities follow from Lemma \ref{lem_Tor-vanishingandinvariants} and the Auslander--Buchsbaum formula.
\end{proof}
\begin{rem}
From Corollary \ref{cor_depthregpowers}, we deduce that the stability index of the function $\depth I^{s-1}/I^s$ is  at most the stability index of the function $\depth R/I^s$, at least if $\chara k=0$ or $I$ is monomial. This result is quite surprising to us, as it was not so clear that {\it a priori}, there is such a relation without any assumption on the characteristic (see \cite[page 832]{HTT}).
\end{rem}

\section{Linearity defect}
\label{sect_lindofpowers}
In this section, we characterize the (asymptotic) linearity defect of powers of $P$ in terms of the data of $I$ and $J$. The goal is to find formulas similar to those of Proposition \ref{prop_formula_depthreg} and Theorem \ref{thm_asymptoticproj_general}. If $I$ and $J$ are of small type, using Betti splittings, we get similar upper bounds which in many situations are actual values for $\lind_T P^s$: In fact the equality occurs if $I$ and $J$ are of doubly small type. There is a lower bound $\max\{\lind_R I^s, \lind_S J^s\} \le \lind_T P^s$ following from Lemma \ref{lem_retract}, but this is far from sharp even for the starting case $s=1$, as seen from Proposition \ref{prop_invariants:mixedsum}(ii).

\subsection{Upper bounds and formulas}
The first main result of this section is the following. Part (ii) of the next result is the motive for introducing  ideals of doubly small type.
\begin{thm}
\label{thm_boundld_mixedsum}
Let $I$ and $J$ be ideals of small type. Assume that $I^i\neq 0, J^i\neq 0$ for all $i\ge 1$, e.g. $R$ and $S$ are reduced. Then the following statements hold:
\begin{enumerate}[\quad \rm(i)]
\item For all $s\ge 1$, we have an inequality
\[
\lind_T P^s \le \max_{i\in [1,s-1], j\in [1,s]}\left\{\lind_R I^{s-i}+\lind_S J^i,\lind_R I^{s-j+1}+\lind_S J^j+1\right\}.
\]
\item Assume further that both $I$ and $J$ are of doubly small type. Then the inequality in \textup{(i)} is an equality for all $s\ge 1$.
\end{enumerate}
\end{thm}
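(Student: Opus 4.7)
The plan hinges on the Betti splitting $I^r P^s = I^{r+1} P^{s-1} + I^r J^s$ from Theorem \ref{thm_sumsofsmalldoublysmall}(i), whose intersection is $I^{r+1} J^s$ by Lemma \ref{lem_intersect_quotient}. For every $r \ge 0$ and $s \ge 1$ this produces the short exact sequence
$$0 \longrightarrow I^{r+1} J^s \xlongrightarrow{\alpha} I^{r+1} P^{s-1} \oplus I^r J^s \longrightarrow I^r P^s \longrightarrow 0,$$
where $\alpha$ is the diagonal inclusion. Combining $I^a J^b \cong I^a \otimes_k J^b$ (Lemma \ref{lem_intersect_quotient}) with Lemma \ref{lem_tensor} yields the key identity $\lind_T (I^a J^b) = \lind_R I^a + \lind_S J^b$, and in particular $\lind_T I^r = \lind_R I^r$.

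For part (i), I would feed the above sequence into Proposition \ref{prop_Bettisplittings} to obtain the recursive inequality
$$\lind_T (I^r P^s) \le \max\{\lind_T (I^{r+1} P^{s-1}),\ \lind_R I^r + \lind_S J^s,\ \lind_R I^{r+1} + \lind_S J^s + 1\},$$
and then induct on $s$. Iteratively expanding the first argument of the max and terminating at $\lind_T I^{r+s} = \lind_R I^{r+s}$ gives, upon setting $r=0$, an upper bound indexed over $0 \le i \le s-1$ plus an extra boundary term $\lind_R I^s$. A direct reindexing identifies the non-boundary terms with the two sets in the statement; the stray terms $\lind_S J^s$ and $\lind_R I^s$ are dominated respectively by $\lind_R I + \lind_S J^s + 1$ (the $j=s$ term of Set B) and $\lind_R I^s + \lind_S J + 1$ (the $j=1$ term of Set B), using $\lind_R I, \lind_S J \ge 0$.

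For part (ii) the doubly small-type assumption lets us invoke Theorem \ref{thm_sumsofsmalldoublysmall}(ii): both components $I^{r+1} J^s \to I^{r+1} P^{s-1}$ and $I^{r+1} J^s \to I^r J^s$ of $\alpha$ are doubly Tor-vanishing. Taking the direct sum of their liftings (whose images lie in the square of the graded maximal ideal of $T$ times the target resolution) shows that $\alpha$ itself is doubly Tor-vanishing. Lemma \ref{lem_Tor-vanishingandinvariants}(ii) then upgrades Proposition \ref{prop_Bettisplittings}'s bound to an equality
$$\lind_T (I^r P^s) = \max\{\lind_T (I^{r+1} P^{s-1}),\ \lind_R I^r + \lind_S J^s,\ \lind_R I^{r+1} + \lind_S J^s + 1\}.$$
Since the max commutes with iteration, unrolling the recursion exactly as in (i) provides the matching lower bound.

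I expect the main obstacle to be purely combinatorial: carefully checking that after $s$-fold unrolling the running maximum matches the theorem's indexing (Set A with $i \in [1,s-1]$ and Set B with $j \in [1,s]$), and verifying that each boundary term arising from the recursion is dominated by a term already present in the stated formula. The substantive ingredients, namely that $\alpha$ is (doubly) Tor-vanishing under the hypotheses and the linearity-defect propagation provided by Proposition \ref{prop_Bettisplittings} and Lemma \ref{lem_Tor-vanishingandinvariants}(ii), are already in place.
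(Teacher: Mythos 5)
Your proposal is correct and follows essentially the same route as the paper: the Betti splitting $I^rP^s = I^{r+1}P^{s-1} + I^rJ^s$ with intersection $I^{r+1}J^s$, Proposition \ref{prop_Bettisplittings} plus Lemma \ref{lem_tensor} to obtain the recursion, induction/unrolling on $s$ with the two boundary terms absorbed by the $j=1$ and $j=s$ terms of the second set, and Lemma \ref{lem_Tor-vanishingandinvariants}(ii) in part (ii) to upgrade the inequality to equality. Your additional remark that the diagonal map $\alpha$ is doubly Tor-vanishing because the direct sum of liftings lands in $\mm^2$ of the target resolution is a correct and helpful observation that the paper leaves implicit.
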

\begin{proof}
(i) By Theorem \ref{thm_sumsofsmalldoublysmall}, the decomposition $I^rP^s=I^{r+1}P^{s-1}+I^rJ^s$ is a Betti splitting. Hence by Proposition \ref{prop_Bettisplittings} and Lemma \ref{lem_intersect_quotient}, we obtain
\begin{equation}
\label{eq_ineq_ldIrPs_short}
\lind_T (I^rP^s) \le \max\{\lind_T (I^{r+1}P^{s-1}), \lind_T (I^rJ^s), \lind_T (I^{r+1}J^s)+1\}.
\end{equation}
Using Lemma \ref{lem_tensor}, we then get
\[
\lind_T (I^rP^s) \le \max\{\lind_T (I^{r+1}P^{s-1}), \lind_R I^r+\lind_S J^s, \lind_R I^{r+1}+\lind_S J^s+1\}.
\]
Using induction on $s$, we infer that
\begin{equation}
\label{eq_ineqldIrPs}
\lind_T (I^rP^s) \le \max_{i\in [0,s], j\in [1,s]}\left\{\lind_R I^{r+s-i}+\lind_S J^i,\lind_R I^{r+s-j+1}+\lind_S J^j+1\right\}.
\end{equation}
Setting $r=0$ in (i), the conclusion is
\[
\lind_T P^s \le \max_{i\in [0,s], j\in [1,s]}\left\{\lind_R I^{s-i}+\lind_S J^i,\lind_R I^{s-j+1}+\lind_S J^j+1\right\}.
\]
The term on the right-hand side corresponding to $i=0$ can be omitted, since it is smaller than the term corresponding to $j=1$. Similarly, the term on the right-hand side corresponding to $i=s$ can be omitted, since it is smaller than the term corresponding to $j=s$. Hence the desired inequality follows.

(ii) Since $I$ and $J$ are of doubly small type, by Theorem \ref{thm_sumsofsmalldoublysmall}, the maps $I^{r+1}J^s\to I^rJ^s$ and $I^{r+1}J^s\to I^{r+1}P^{s-1}$ are doubly Tor-vanishing. Consider the exact sequence $
0 \longrightarrow I^{r+1}J^s \longrightarrow I^rJ^s\oplus I^{r+1}P^{s-1} \longrightarrow I^rP^s \longrightarrow 0$.
As all the powers of $I$ and $J$ are non-trivial, we also have $I^iJ^j\cong I^i\otimes_k J^j \neq 0$ for all $i,j\ge 0$. Applying Lemma \ref{lem_Tor-vanishingandinvariants}, we see that \eqref{eq_ineq_ldIrPs_short} is now an equality. Hence so is \eqref{eq_ineqldIrPs}, giving the desired conclusion. 
\end{proof}
\begin{ex}
\label{ex_notofsmalltype_ld}
The following example shows that the conclusion of Theorem \ref{thm_boundld_mixedsum} may fail if one of the ideals $I$ and $J$ is not of small type. The example is quite similar to Example \ref{ex_nonpolynomialbase}, and it shows that even if $R$ is regular and $S$ is a non-regular Koszul ring, powers of the mixed sum $I+J$ may have complicated behavior. 

Consider the rings and ideals $R=k[a,b,c], I=(a^4,a^3b,ab^3,b^4,a^2b^2c^5)$ and $S=k[x,y,z]/(x^2,yz), J=(x+y)$. We will show that
\begin{enumerate}
\item $I$ is of small type, $J$ is neither of small type nor nilpotent,
\item $\lind_R I=\lind_S J=1, \lind_R I^2=\lind_S J^2=0$,
\item $\lind_T P^2\ge 3 > 2= \max\{\lind_R I+\lind_S J, \lind_R I^2+\lind_S J+1, \lind_R I+\lind_S J^2+1\}$.
\end{enumerate}

(i) We have $J^n=(x+y)y^{n-1}$ for $n\ge 2$, hence $J$ is not nilpotent. By direct inspection, $0:(x+y)=(xz)$, hence $\beta_{1,3}(J)\neq 0$. On the other hand, $(x+y)^2:(x+y)=(x+y)$, hence $\beta_{1,3}(J/J^2)=0$. The map $\Tor^S_1(k,J) \to \Tor^S_1(k,J/J^2)$ being not injective, so $J$ is not of small type. We know by Theorem \ref{thm_monomialidealsareofsmalltype} that $I$ is of small type.

(ii) Since $S$ is Koszul and $\reg_S J\ge 2$, $\lind_S J\ge 1$. On the other hand, $0:(x+y)=(xz)$ and as $S$ is defined a by quadratic monomial ideal, $0:(xz)=(x,y)$ has a $1$-linear resolution over $S$. This implies that $\lind_S J=1$.

Observe that $J^2=(xy+y^2)$ and $0:(xy+y^2)=(z)$ has a $1$-linear resolution over $S$, so $\lind_S J^2=0$. By direct computation with Macaulay2, $\lind_R I=1$. Clearly $\lind_R I^2=0$ since $I^2=(a,b)^8$.

(iii) By computations with Macaulay2 \cite{GS}, $\beta_{3,10}(P^2)\neq 0$ while $\beta_{2,9}(P^2) = 0$. Let $F$ be the minimal graded free resolution of $P^2$ over $T$. By the minimality of $F$,  we see that in $\linp^T F$, the map $(\linp^T F)_3 \to (\linp^T F)_2$ has non-trivial kernel. This implies that $\lind_T P^2\ge 3$, as desired.
\end{ex}
The second main result of this section is an analog of Proposition \ref{prop_formula_projreg_specialcases}(iii). We were benefited by the method of \cite{HTT} in Step 2 of the proof.
\begin{thm}
\label{thm_sumswithlinearKoszulpowers}
Assume that $J$ is of small type and  $I$ is an ideal generated by linear forms of $R$ such that all the powers of $I$ are Koszul. Assume further that $I^i, J^i\neq 0$ for all $i\ge 1$. Then for all $s\ge 1$, we have $\lind_T P^s = \max_{i\in [1,s]}\{\lind_S J^i\}$.
\end{thm}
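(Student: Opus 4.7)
The plan is to iterate a small-inclusion inequality along the $I$-adic filtration of $P^s$. From the proof of Proposition~\ref{prop_formula_projreg_specialcases}, since $J$ is of small type, the inclusion $I^{j+1}P^{s-j-1} \hookrightarrow I^jP^{s-j}$ is small for each $j \ge 0$; its cokernel $(I^j/I^{j+1}) \otimes_k J^{s-j}$ has linearity defect $\lind_S J^{s-j}$ by Lemma~\ref{lem_tensor}, since $I^j/I^{j+1}$ has a linear resolution (as $I^{j+1} \hookrightarrow I^j$ is Tor-vanishing and both ideals have linear resolutions, applying Lemma~\ref{lem_Tor-vanishingandinvariants}(i)). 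For any small short exact sequence $0 \to M \to P \to N \to 0$ of finitely generated graded $T$-modules, smallness lets one choose bases of the minimal resolution of $P$ extending those of $M$, producing a short exact sequence of linear-part complexes whose long exact sequence in homology yields $\lind_T P \le \max\{\lind_T M, \lind_T N\}$. Iterating along $P^s \supseteq IP^{s-1} \supseteq \cdots \supseteq I^s$ (noting $\lind_T I^s = 0$) produces the upper bound $\lind_T P^s \le \max_{i \in [1,s]} \lind_S J^i$.

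\textbf{Lower bound.} For the reverse inequality, the plan is to combine the retract bound from Lemma~\ref{lem_retract} with the same long exact sequence analysis. The retract $T \to S$ killing $\mm_R$ gives $\lind_S J^s \le \lind_T P^s$ directly. To recover $\lind_S J^i$ for $i < s$, examine the long exact sequence of homology of linear parts associated with $0 \to IP^{s-1} \to P^s \to (R/I) \otimes_k J^s \to 0$: writing $d = \lind_T IP^{s-1}$, when $d \ge \lind_S J^s$ the vanishing in degree $d+1$ of the homology of the linear part of the quotient forces the induced map on degree-$d$ homology of the linear parts of $IP^{s-1}$ and $P^s$ to be injective, giving $\lind_T P^s \ge \lind_T IP^{s-1}$; in the opposite regime, the retract bound $\lind_S J^s \le \lind_T P^s$ already dominates. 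One then recurses on $IP^{s-1}$ through its own filtration by the $I^jP^{s-j}$ with $j \ge 1$, alternating between the injectivity argument and the equality forced when the connecting homomorphism fails to be injective, to propagate lower bounds of the form $\lind_T I^jP^{s-j} \ge \lind_S J^{s-j}$ level by level. Assembling these bounds across all $j$ yields $\lind_T P^s \ge \lind_S J^i$ for every $i \in [1,s]$.

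\textbf{Main obstacle.} The hard part will be the corner case $\lind_T I^{j+1}P^{s-j-1} < \lind_S J^{s-j}$ at intermediate filtration levels. The retract $T \to S$ kills $I^{j+1}P^{s-j-1}$ entirely, so no direct retract bound is available once $j \ge 1$. Proving $\lind_T I^jP^{s-j} \ge \lind_S J^{s-j}$ in this regime requires showing that the connecting homomorphism in the relevant long exact sequence of linear parts is not injective; this is precisely where the HTT-inspired bookkeeping mentioned in the paper's remark must enter, tracking how the linear off-diagonal entries of the differential in the minimal resolution of $P^s$ (non-trivial precisely when $J$ has generators in low degrees) interact with the Koszulness of $R/I$ to preclude injectivity of the connecting map.
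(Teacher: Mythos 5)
Your upper bound follows essentially the paper's route: both arguments exploit that the inclusion $I^{r+1}P^{s-1}\hookrightarrow I^rP^s$ is small (which the paper derives from $J$ being of small type, invoking the claim in the proof of Proposition~\ref{prop_formula_projreg_specialcases}) to reduce to the quotient $(I^r/I^{r+1})\otimes_k J^s$. However, the way you justify that $I^r/I^{r+1}$ is Koszul is not correct. Lemma~\ref{lem_Tor-vanishingandinvariants}(i) only controls $\projdim$ and $\reg$, neither of which implies linearity defect zero over a general standard graded algebra $R$ (which is not assumed to be a polynomial ring, nor even Koszul, here). The paper instead applies Theorem~\ref{thm_m-smallext}: since $I^r$ and $I^{r+1}$ are Koszul and the filtration condition $I^{r+1}\cap\mm^{q+1}I^r=\mm^q I^{r+1}$ holds by degree reasons because $I$ is generated in degree one, one concludes $\lind_R(I^r/I^{r+1})=0$. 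You need this or an equivalent argument; the regularity equality from Lemma~\ref{lem_Tor-vanishingandinvariants}(i) does not substitute for it.

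The lower bound is where your proof has a genuine gap, and in fact you say so yourself in the ``Main obstacle'' paragraph: the recursion through the filtration $P^s\supseteq IP^{s-1}\supseteq\cdots\supseteq I^s$ hits the corner case where $\lind_T I^{j+1}P^{s-j-1}<\lind_S J^{s-j}$ for $j\ge 1$, the retract to $S$ kills everything relevant, and you have no mechanism to rule out injectivity of the connecting map. You gesture at ``HTT-inspired bookkeeping'' to close this, but the argument is not given, so the proof is incomplete. The paper avoids this pointwise filtration analysis entirely. Instead it considers the single quotient module $I^rP^{s-1}/I^rP^s$ and computes its linearity defect \emph{exactly}, via the tensor decomposition of Lemma~\ref{lem_intersect_quotient} (equation~\eqref{eq_decompositionofquotient}) and Lemma~\ref{lem_tensor}, obtaining $\lind_T(I^rP^{s-1}/I^rP^s)=\max_{i\in[1,s]}\lind_S(J^{i-1}/J^i)$. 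A short algebraic Claim, using only that $J$ is of small type and Proposition~\ref{prop_zeroinducedmapofTor}, shows this equals $\max_{i\in[1,s]}\lind_S J^i+1$ (whenever that maximum is positive). Finally, since $I^rP^s\to I^rP^{s-1}$ is Tor-vanishing by Theorem~\ref{thm_sumsofsmalldoublysmall}(i), Proposition~\ref{prop_zeroinducedmapofTor} gives $\lind_T(I^rP^{s-1}/I^rP^s)\le\max\{\lind_T(I^rP^{s-1}),\lind_T(I^rP^s)+1\}$, and the Step~1 upper bound on $\lind_T(I^rP^{s-1})$ forces $\lind_T(I^rP^s)\ge\max_{i\in[1,s]}\lind_S J^i$. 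So the key idea you are missing is to look at the associated graded piece $I^rP^{s-1}/I^rP^s$, whose linearity defect is computable in closed form, rather than trying to propagate lower bounds through each level of the $I$-adic filtration directly.
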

\begin{proof}
We prove more generally for all $r\ge 0, s\ge 1$ the equality $\lind_T (I^rP^s) =$ $\max_{i\in [1,s]} \{\lind_S J^i\}$.

\textsf{Step 1}: Consider the following exact sequence
\[
0 \longrightarrow I^{r+1}P^{s-1} \longrightarrow I^rP^s \longrightarrow I^rJ^s/(I^{r+1}J^s)\cong (I^r/I^{r+1})\otimes_k J^s \longrightarrow 0.
\]
Since $J$ is of small type, by the claim in the proof of Proposition \ref{prop_formula_projreg_specialcases}, the map $\Tor^R_{i+1}(k, I^rJ^s/(I^{r+1}J^s)) \to$ $ \Tor^R_i(k,I^{r+1}P^{s-1})$ is zero for all $i\ge 0$. By \cite[Proposition 2.5(ii)]{Ng1}, there is an inequality
\begin{equation}
\label{eq_ineq_lindJrIs_linearJ}
\lind_T (I^rP^s) \le \max\left\{\lind_T (I^{r+1}P^{s-1}), \lind_T \left((I^r/I^{r+1})\otimes_k J^s\right)\right\}.
\end{equation}
Using Theorem \ref{thm_m-smallext}, we prove that $I^r/I^{r+1}$ is a Koszul $R$-module. For this, we need to check that (a) $I^r$ and $I^{r+1}$ are Koszul, and (b) $I^{r+1}\cap \mm^{s+1}I^r=\mm^sI^{r+1}$ for all $s\ge 0$.
The hypothesis guarantees (a), while (b) holds true by degree reason and the fact that $I$ is generated by linear forms. Hence by Theorem \ref{thm_m-smallext}, $I^r/I^{r+1}$ is Koszul. 

Together with Lemma \ref{lem_tensor} and \eqref{eq_ineq_lindJrIs_linearJ}, we get the inequality 
$$
\lind_T (I^rP^s) \le \max \{\lind_T (I^{r+1}P^{s-1}), \lind_S J^s\}.
$$
Using induction on $s\ge 0$, we get $\lind_T (I^rP^s) \le \max_{i\in [1,s]} \{\lind_S J^i\}.$  

\textsf{Step 2}: It remains to show that $\lind_T (I^rP^s) \ge \max_{i\in [1,s]} \{\lind_S J^i\}$
for all $r\ge 0, s\ge 1$. There is nothing to do if the right-hand side is $0$, hence we will assume its positivity.

By Lemmas \ref{lem_intersect_quotient} and \ref{lem_tensor}, there is an equality
\[
\lind_T \left(I^rP^{s-1}/I^rP^s\right)=\max_{i\in [0,s-1]}\left\{\lind_R (I^{r+i}/I^{r+i+1})+\lind_S (J^{s-i-1}/J^{s-i})\right\}.
\]
As proved in Step 1, $I^r/I^{r+1}$ is Koszul for all $r$. So we get
\[
\lind_T \left(I^rP^{s-1}/I^rP^s\right)=\max_{i\in [1,s]}\left\{\lind_S (J^{i-1}/J^i)\right\}.
\]
\textbf{Claim:} Given $\max_{i\in [1,s]} \{\lind_S J^i\}\ge 1$, it holds that
\begin{equation}
\label{eq_comparisionld}
\max_{i\in [1,s]}\left\{\lind_S (J^{i-1}/J^i)\right\}=\max_{i\in [1,s]} \{\lind_S J^i\}+1.
\end{equation}
{\it Proof of the claim.} Since $J$ is of small type, Proposition \ref{prop_zeroinducedmapofTor} yields $\lind_S (J^{i-1}/J^i) \le \max\{\lind_S J^{i-1}, \lind_S J^i+1\}$ for $1\le i\le s$. These inequalities imply that the left-hand side of \eqref{eq_comparisionld} is $\le$ the right-hand side.

For the reverse inequality, again using Proposition \ref{prop_zeroinducedmapofTor}, we have 
$$
\lind_S J^i \le \max\{\lind_S J^{i-1}, \lind_S (J^{i-1}/J^i)-1\}
$$ 
for $2\le i\le s$. Hence it suffices to show that $\lind_S J+1 \le$ the left-hand side of \eqref{eq_comparisionld}. If $\lind_S (S/J)\ge 1$, we have $\lind_S J+1=\lind_S (S/J)$ and we are done. If $\lind_S (S/J)=0$ then also $\lind_S J=0$ and the desired inequality holds unless the left-hand side of \eqref{eq_comparisionld} is zero. But if that happens then by induction on $i$ and Proposition \ref{prop_zeroinducedmapofTor}, we have $\lind_S J^i=0$ for $1\le i\le s$. This contradicts the assumption $\max_{i\in [1,s]}\{\lind_S J^i\}\ge 1$. So $\lind_S J+1 \le$ the left-hand side of \eqref{eq_comparisionld} and the claim follows.

The previous discussions show that
\[
\lind_T \left(I^rP^{s-1}/I^rP^s\right)=\max_{i\in [1,s]} \{\lind_S J^i\}+1.
\]
By the proof of Theorem \ref{thm_sumsofsmalldoublysmall}, the map $I^rP^s\to I^rP^{s-1}$ is Tor-vanishing. Hence by Proposition \ref{prop_zeroinducedmapofTor}, there is an inequality
\[
\lind_T \left(I^rP^{s-1}/I^rP^s\right) \le \max\{\lind_T (I^rP^{s-1}),\lind_T (I^rP^s)+1\}.
\]
Putting everything together, we obtain
\begin{equation*}
\max_{i\in [1,s]}\{\lind_S J^i\}+1 \le \max\{\lind_T (I^rP^{s-1}),\lind_T (I^rP^s)+1\}.
\end{equation*}
Recall that from Step 1, $\lind_T (I^rP^{s-1}) \le \max_{i\in [1,s-1]}\{\lind_S J^i\}$, hence $\lind_T (I^rP^s) \ge \max_{i\in [1,s]}\{\lind_S J^i\}$, as desired. The proof is concluded.
\end{proof}
\begin{ex}
The conclusion of Theorem \ref{thm_sumswithlinearKoszulpowers} does not hold if $J$ is not of small type. Consider $R=k[a], I=(a), S=k[x,y,z]/(x^2,yz)$ and $J=(x+y)$. By Example \ref{ex_notofsmalltype_ld}, $J$ is not of small type. We claim that in $T=R\otimes_k S$, $\lind_T P^2 \ge 2 > 1=\max\{\lind_S J, \lind_S J^2\}$. Indeed, by computations with Macaulay2 \cite{GS}, $\beta_{2,5}(P^2)\neq 0$ while $\beta_{1,j}(P^2)=0$ for $j\ge 4$. Similarly to Example \ref{ex_notofsmalltype_ld}, $\lind_T P^2 \ge 2$.
\end{ex}
\subsection{Asymptotes}
Denote by 
$$
\glind R=\sup\{\lind_R M:  \text{$M$ is a finitely generated graded $R$-module}\}
$$ 
the so-called {\it global linearity defect} of $R$. For example, if $R$ is regular (or more generally a quadric hypersurface) then $\glind R= \dim R$; see \cite[Corollary 6.4]{CINR}. Let the {\it index of linearity defect stability} of $I$, $\lstab(I)$, to be the least positive integer $r$ such that for all $i\ge r$, $\lind_R I^i$ is a constant depending only on $I$. By \cite[Theorem 1.1]{NgV1b}, if $\glind R<\infty$ then $\lstab(J)$ is well-defined and finite for every $I$. 

The third main result of Section \ref{sect_lindofpowers} is
\begin{thm}
\label{thm_asymptoticlind}
Assume that $R$ and $S$ have finite global linearity defect and $I^i\neq 0, J^i\neq 0$ for all $i\ge 1$. Assume further that $I$ and $J$ are of doubly small type. Then for all $s\ge \lstab(I)+\lstab(J)$, we have
\[
\lind_T P^s=\max\left\{\lim_{i\to \infty}\lind_R I^i+\max_{j \ge 1}\lind_S J^j+1 , \max_{i\ge 1}\lind_R I^i+\lim_{j\to \infty}\lind_S J^j+1\right\}.
\]
\end{thm}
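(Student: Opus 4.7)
The starting point will be Theorem~\ref{thm_boundld_mixedsum}(ii), which under the doubly--small--type hypothesis converts the earlier bound into the exact equality
\[
\lind_T P^s = \max_{i\in[1,s-1],\,j\in[1,s]}\bigl\{\lind_R I^{s-i}+\lind_S J^i,\ \lind_R I^{s-j+1}+\lind_S J^j+1\bigr\}
\]
for every $s\ge 1$. The hypothesis $\glind R,\glind S<\infty$ guarantees, via \cite[Theorem~1.1]{NgV2}, that $\lstab(I)$ and $\lstab(J)$ are finite, so that the asymptotic values $\lim_{i\to\infty}\lind_R I^i$ and $\lim_{j\to\infty}\lind_S J^j$ are well defined and coincide with $\lind_R I^i$ and $\lind_S J^j$ for $i\ge\lstab(I)$ and $j\ge\lstab(J)$ respectively. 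The plan is simply to massage the two inner maxima, splitting the $J$--exponent at $\lstab(J)$, in exact parallel with the proof of Theorem~\ref{thm_asymptoticproj_general} for projective dimension.

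Write $A:=\max_{i\ge 1}\lind_R I^i+\lim_{j\to\infty}\lind_S J^j+1$ and $B:=\lim_{i\to\infty}\lind_R I^i+\max_{j\ge 1}\lind_S J^j+1$ for the two quantities appearing on the right-hand side of the theorem; the task is to show that the inner max equals $\max\{A,B\}$ whenever $s\ge\lstab(I)+\lstab(J)$. Fix such an $s$ and treat the ``$+1$'' term first. For $j\in[\lstab(J),s]$ one has $\lind_S J^j=\lim_{j\to\infty}\lind_S J^j$, and as $j$ varies in this range the exponent $s-j+1$ sweeps the interval $[1,s-\lstab(J)+1]$, which contains $[1,\lstab(I)]$ thanks to $s\ge\lstab(I)+\lstab(J)$; hence the partial max equals exactly $A$. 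For $j\in[1,\lstab(J)-1]$ we have $s-j+1\ge\lstab(I)+2$, so $\lind_R I^{s-j+1}=\lim_{i\to\infty}\lind_R I^i$, and the partial max becomes $\lim_{i\to\infty}\lind_R I^i+\max_{1\le j<\lstab(J)}\lind_S J^j+1$. Combining these two partial maxima and using $\max_{j\ge 1}\lind_S J^j=\max\{\max_{1\le j<\lstab(J)}\lind_S J^j,\lim_{j\to\infty}\lind_S J^j\}$, together with the trivial bound $\lim_{i\to\infty}\lind_R I^i+\lim_{j\to\infty}\lind_S J^j+1\le A$, yields that the ``$+1$'' portion equals precisely $\max\{A,B\}$.

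It remains to verify that the ``no $+1$'' term is dominated by $\max\{A,B\}$ and therefore does not enlarge it. Running the same splitting on $i\in[1,s-1]$ at $\lstab(J)$, and again using $s\ge\lstab(I)+\lstab(J)$ to force the $I$--exponent into its stable range whenever $i\ge\lstab(J)$ (and symmetrically, into $[\lstab(I),\infty)$ whenever $i<\lstab(J)$), each resulting summand is exactly one less than its ``$+1$'' counterpart; hence the no--$+1$ max is at most $\max\{A,B\}-1<\max\{A,B\}$. Combining the two contributions gives the desired equality $\lind_T P^s=\max\{A,B\}$. The main obstacle is purely combinatorial bookkeeping: one must check carefully that the threshold $s\ge\lstab(I)+\lstab(J)$ is precisely what guarantees the two complementary ``stable range'' conditions in each splitting. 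No new homological input beyond Theorem~\ref{thm_boundld_mixedsum}(ii) is used.
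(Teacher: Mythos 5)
Your proof is correct and follows exactly the paper's route: the paper's own proof of this theorem consists of the single sentence ``The proof is parallel to that of Theorem~\ref{thm_asymptoticproj_general},'' invoking Theorem~\ref{thm_boundld_mixedsum}(ii) as the input, and your argument just carries out that parallel computation explicitly. One small slip worth noting: in the last paragraph the parenthetical ``to force the $I$--exponent into its stable range whenever $i\ge\lstab(J)$ (and symmetrically, into $[\lstab(I),\infty)$ whenever $i<\lstab(J)$)'' is garbled — for $i\ge\lstab(J)$ it is the $J$-exponent that sits in its stable range while the $I$-exponent $s-i$ sweeps a range containing $[1,\lstab(I)]$, and for $i<\lstab(J)$ it is the $I$-exponent that is stable — but your first paragraph handles the ``$+1$'' terms with the correct bookkeeping, and the ``no $+1$'' partial maxima come out to exactly $\max\{A-1,B-1\}$ by the identical split, so the conclusion stands.
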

\begin{proof}
The proof is parallel to that of Theorem \ref{thm_asymptoticproj_general}, taking Theorem \ref{thm_boundld_mixedsum}(ii) into account. 
\end{proof}
\begin{cor}
With the notations and assumptions of Theorem \ref{thm_asymptoticlind}, there is an inequality $\lstab(I+J)\le \lstab(I)+\lstab(J)$.
\end{cor}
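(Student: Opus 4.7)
The plan is to deduce the corollary directly from Theorem \ref{thm_asymptoticlind}. That theorem asserts that under the present hypotheses, for every $s\ge \lstab(I)+\lstab(J)$ there is an equality
\[
\lind_T P^s=\max\left\{\lim_{i\to \infty}\lind_R I^i+\max_{j \ge 1}\lind_S J^j+1 , \max_{i\ge 1}\lind_R I^i+\lim_{j\to \infty}\lind_S J^j+1\right\}.
\]
The right-hand side is a constant depending only on $I$ and $J$; in particular it is independent of $s$.

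Consequently, the sequence $\bigl(\lind_T P^s\bigr)_{s\ge \lstab(I)+\lstab(J)}$ is stationary, equal to that constant. This simultaneously shows that $\lstab(P)$ is well-defined (so no separate appeal to finiteness of $\glind T$ is needed) and that it is at most $\lstab(I)+\lstab(J)$, by the very definition of $\lstab$. No additional argument is required; there is no real obstacle since all of the work has been done in Theorem \ref{thm_asymptoticlind}.
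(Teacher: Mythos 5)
Your argument is correct and is exactly the intended one: Theorem \ref{thm_asymptoticlind} shows that $\lind_T P^s$ equals a fixed constant for all $s\ge \lstab(I)+\lstab(J)$, and the bound on $\lstab(I+J)$ follows immediately from the definition of $\lstab$. The paper gives no separate proof of this corollary precisely because it is this immediate; your write-up matches that reasoning.
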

We are able to compute $\lind_T P^s$, $s$ large enough, in some fairly general cases.
\begin{cor}
\label{cor_asymptote_specialcases}
Let $(R,\mm), (S,\nn)$ be polynomial rings over $k$. Let $(0) \neq I \subseteq \mm^2, (0)\neq J \subseteq \nn^2$ be homogeneous ideals of $R,S$, resp. Assume that one of the following conditions holds:
\begin{enumerate}[\quad \rm(i)]
\item $\chara k=0$ and $I=U^p$ and $J=V^q$ where $p,q\ge 2$ and $U,V$ are homogeneous ideals of $R,S$, resp.
\item $I=U^p$ and $J=V^q$ where $p,q \ge 2$ and $U,V$ are monomial ideals of $R,S$, resp.
\item All of the powers of $I$ and $J$ are Koszul.
\end{enumerate}
Then for all $s\ge \lstab(I)+\lstab(J)$, there is an equality
\[
\lind_T P^s=\max\left\{\lim_{i\to \infty}\lind_R I^i+\max_{j \ge 1}\lind_S J^j+1 , \max_{i \ge 1}\lind_R I^i+\lim_{j\to \infty}\lind_S J^j+1\right\}.
\]
\end{cor}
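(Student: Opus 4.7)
The plan is to reduce the corollary to a direct application of Theorem~\ref{thm_asymptoticlind}, which already gives the desired asymptotic formula for $\lind_T P^s$ under three hypotheses: that $R$ and $S$ have finite global linearity defect, that $I^i, J^i \neq 0$ for all $i \ge 1$, and that $I$ and $J$ are of doubly small type. My task then reduces to verifying these three hypotheses under each of the alternative conditions (i), (ii), (iii).

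The first two hypotheses are immediate and require no work. Since $R$ and $S$ are polynomial rings, they are regular, so the result cited after the definition of $\glind$ (namely \cite[Corollary~6.4]{CINR}) gives $\glind R = \dim R$ and $\glind S = \dim S$, both finite. Since polynomial rings are domains, any nonzero ideal has all nonzero powers, so from $I \neq 0$ and $J \neq 0$ we obtain $I^i, J^i \neq 0$ for every $i \ge 1$.

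The only substantive step, and the one I would single out as the crux of the argument, is to show that $I$ and $J$ are of doubly small type in each case. For cases (i) and (ii), I would invoke Proposition~\ref{prop_stronglyGolodisofdoublysmalltype}(ii): it guarantees that $U^s$ is of doubly small type whenever $s \ge 2$, under either the characteristic zero assumption or the monomial assumption on $U$. Applying this with $U$ and $V$, and using the hypotheses $p, q \ge 2$, both $I = U^p$ and $J = V^q$ are of doubly small type. For case (iii), observe that polynomial rings are Koszul algebras; combined with the hypothesis $I \subseteq \mm^2$ and the Koszulness of all powers of $I$, Proposition~\ref{prop_idealsofsmalltype}(ii) applies and delivers that $I$ is of doubly small type, and analogously for $J$.

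With the three hypotheses of Theorem~\ref{thm_asymptoticlind} verified in each case, the conclusion of the corollary for $s \ge \lstab(I) + \lstab(J)$ follows immediately. Thus there is no genuine obstacle in this proof: the work already done in Proposition~\ref{prop_stronglyGolodisofdoublysmalltype} and Proposition~\ref{prop_idealsofsmalltype} is precisely what is needed to match the doubly small type hypothesis of the asymptotic theorem, and everything else is bookkeeping.
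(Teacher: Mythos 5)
Your proposal is correct and follows the paper's own one-line proof exactly: the paper cites Propositions~\ref{prop_idealsofsmalltype} and~\ref{prop_stronglyGolodisofdoublysmalltype} together with Theorem~\ref{thm_asymptoticlind}, and you have simply unpacked that citation into its constituent verifications. The routine checks you spell out (polynomial rings are regular domains, hence have finite global linearity defect and no nonzero nilpotent ideals) are exactly the implicit bookkeeping the paper leaves to the reader.
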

\begin{proof}
This follows by combining Propositions \ref{prop_idealswithKoszulpowers}, \ref{prop_stronglyGolodisofdoublysmalltype} and Theorem \ref{thm_asymptoticlind}.
\end{proof}
If additionally, one of $I$ and $J$ is generated by linear forms, we have a nicer asymptotic statement.
\begin{thm}
\label{thm_asymptoticlind_linear}
Let $R, S$ be polynomial rings over $k$. Let $I$ be generated by linear forms. Assume further that one of the following conditions holds:
\begin{enumerate}[\quad \rm(i)]
\item $\chara k=0$.
\item $J$ is a monomial ideal of $S$.
\item All the powers of $J$ are Koszul.
\end{enumerate}
Then for all $s\ge 1$, there is an equality $\lind_T P^s=\max_{i\in [1,s]}\{\lind_S J^i\}$.
Moreover, if $1\le p\le \lstab(J)$ be minimal such that $\lind_S J^p=\max_{i\ge 1} \{\lind_S J^i\}$ then $\lind_T P^s=\max_{i\ge 1}\{\lind_S J^i\}$ for all $s\ge p$ and $\lstab(I+J)=p$.
\end{thm}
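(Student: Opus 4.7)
The plan is to reduce the first equality to Theorem~\ref{thm_sumswithlinearKoszulpowers}, and then deduce the second part by elementary analysis of the maximum function. To apply Theorem~\ref{thm_sumswithlinearKoszulpowers} we must verify that $I^i, J^i \ne 0$ for all $i \ge 1$, that $J$ is of small type, and that all powers of $I$ are Koszul. The non-vanishing is automatic because $R$ and $S$ are polynomial (hence domains) while $I$ and $J$ are assumed non-zero (the standing hypothesis of Section~\ref{sect_depthreg}).

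For the smallness of $J$, the three hypotheses handle themselves directly: under (i) apply Lemma~\ref{lem_mapsofTor}(ii); under (ii) apply Theorem~\ref{thm_monomialidealsareofsmalltype}; under (iii) apply Proposition~\ref{prop_idealsofsmalltype}(i). For the Koszulness of all powers of $I$, the key observation is that $I$ being generated by linear forms in a polynomial ring, after a linear change of coordinates we may assume $I = (x_1,\dots,x_c)R$ for some $c$; then $I^s$ is (the extension of) a power of the graded maximal ideal of the polynomial subring $k[x_1,\dots,x_c]$, and such powers are classically known to have $s$-linear resolutions (via, e.g., the Eagon--Northcott complex), hence are Koszul modules over the Koszul algebra $R$. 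With all hypotheses verified, Theorem~\ref{thm_sumswithlinearKoszulpowers} yields
\[
\lind_T P^s \;=\; \max_{i\in[1,s]}\{\lind_S J^i\} \qquad \text{for all } s\ge 1.
\]

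For the second part, since $S$ is a polynomial ring we have $\glind S<\infty$, so by \cite[Theorem 1.1]{NgV2} the sequence $(\lind_S J^i)_{i\ge 1}$ is eventually constant and $\lstab(J)$ is a well-defined positive integer. Thus $p$ with the stated property exists and lies in $[1,\lstab(J)]$. For $s \ge p$ we have $p \in [1,s]$, so
\[
\max_{i\in[1,s]}\{\lind_S J^i\} \;=\; \lind_S J^p \;=\; \max_{i\ge 1}\{\lind_S J^i\},
\]
giving $\lind_T P^s = \max_{i\ge 1}\{\lind_S J^i\}$ as claimed. For $s<p$, minimality of $p$ forces $\lind_S J^i < \max_{j\ge 1}\{\lind_S J^j\}$ for every $i\in[1,s]\subseteq[1,p-1]$, so $\lind_T P^s < \max_{j\ge 1}\{\lind_S J^j\}$. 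Combining the two regimes, the sequence $(\lind_T P^s)_{s\ge 1}$ stabilizes precisely at $s=p$, yielding $\lstab(I+J)=p$.

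The proof is largely mechanical once Theorem~\ref{thm_sumswithlinearKoszulpowers} is in hand; the only point that requires a moment's thought is the Koszulness of all powers of a linear-form-generated ideal in a polynomial ring, which is where the ``linear forms'' hypothesis is used in an essential way (and which, together with the smallness of $J$, is what distinguishes this result from Corollary~\ref{cor_asymptote_specialcases} where $I\subseteq\mm^2$ is imposed instead).
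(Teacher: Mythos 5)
Your proposal is correct and follows essentially the same route the paper takes: verify the hypotheses of Theorem~\ref{thm_sumswithlinearKoszulpowers} (smallness of $J$ via Lemma~\ref{lem_mapsofTor}(ii), Theorem~\ref{thm_monomialidealsareofsmalltype}, or Proposition~\ref{prop_idealsofsmalltype}(i) according to the case; Koszulness of all powers of $I$ since $I$ is generated by linear forms; non-vanishing of powers since $R,S$ are domains and $I,J$ are non-zero by the standing convention) and then read off the conclusion. You spell out two points the paper leaves implicit — that the change of coordinates and Eagon--Northcott give linear resolutions of $I^s$, and the elementary monotonicity argument producing $\lstab(I+J)=p$ — both of which are correct and in the spirit of what the authors intended by calling the result ``immediate.''
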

\begin{proof}
By Lemma \ref{lem_mapsofTor}(ii), Theorem \ref{thm_monomialidealsareofsmalltype} and Proposition \ref{prop_idealswithKoszulpowers}, $J$ is of small type. As all the powers of $I$ have a linear resolution, the result is immediate from Theorem \ref{thm_sumswithlinearKoszulpowers}. 
\end{proof}
\subsection{Ideals with Koszul powers}
\begin{cor}	
Assume that $R$ and $S$ are reduced and $I$ is generated by linear forms. The following statements are equivalent:
\begin{enumerate}[\quad \rm(i)]
\item All the powers of $I$ and $J$ are Koszul ideals;
\item All the powers of $P$ are Koszul ideals.
\end{enumerate}
\end{cor}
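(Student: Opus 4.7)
The plan is to derive both implications directly from machinery already available in the paper, after a single preparatory observation: since $R$ and $S$ are Noetherian and reduced, every element of a power $I^i$ (resp.\ $J^i$) lies in $I$ (resp.\ $J$), so if $I^i=0$ for some $i\ge1$ then every element of $I$ would be nilpotent, forcing $I=0$. Hence $I^i\neq 0$ and $J^i\neq 0$ for all $i\ge1$, which fulfils the nontriviality hypothesis in Theorem \ref{thm_sumswithlinearKoszulpowers} and in all the retract arguments below.

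For the implication $(i)\Rightarrow(ii)$, I would simply quote Theorem \ref{thm_sumswithlinearKoszulpowers} applied with the roles of $I$ and $J$ as in the corollary. By hypothesis $I$ is generated by linear forms and all powers of $I$ are Koszul; moreover, since all powers of $J$ are Koszul, Proposition \ref{prop_idealsofsmalltype}(i) gives that $J$ is of small type. Theorem \ref{thm_sumswithlinearKoszulpowers} then yields
\[
\lind_T P^s=\max_{i\in[1,s]}\lind_S J^i=0\qquad\text{for every }s\ge1,
\]
so every power of $P$ is Koszul.

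For the implication $(ii)\Rightarrow(i)$, the plan is to use Lemma \ref{lem_retract} applied to the canonical algebra retract $\theta\colon R\hookrightarrow T=R\otimes_k S$ with retraction $\phi=\id_R\otimes\,\veps_S\colon T\to R$, where $\veps_S\colon S\to k$ is the augmentation killing $\nn$. Then $\theta(I^s)T=I^sT\subseteq P^s$, and since $\phi(I)=I$ and $\phi(J)=0$ we have $\phi(P)=I$, hence $\phi(P^s)R=I^s$. Lemma \ref{lem_retract} thus gives $\lind_R I^s\le\lind_T P^s=0$, so every power of $I$ is Koszul. A symmetric argument using the retract $S\hookrightarrow T$ with retraction $\veps_R\otimes\id_S$ shows $\lind_S J^s\le\lind_T P^s=0$ for all $s$. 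There is no real obstacle here: the statement is essentially a book-keeping consequence of Theorem \ref{thm_sumswithlinearKoszulpowers} (for one direction) and of the functoriality encoded in Lemma \ref{lem_retract} (for the other), with reducedness used only to avoid the degenerate case of vanishing powers.
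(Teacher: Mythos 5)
Your proposal is correct and follows essentially the same path as the paper: Theorem \ref{thm_sumswithlinearKoszulpowers} (via Proposition \ref{prop_idealsofsmalltype} to get $J$ of small type) for (i)\,$\Rightarrow$\,(ii), and Lemma \ref{lem_retract} applied to the two coordinate retracts for (ii)\,$\Rightarrow$\,(i). Your only addition is to spell out explicitly that reducedness guarantees $I^i,J^i\neq 0$, which the paper leaves tacit; that observation is correct and is indeed what makes Theorem \ref{thm_sumswithlinearKoszulpowers} applicable.
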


\begin{proof}
For (ii) $\Longrightarrow$ (i): Take $s\ge 1$. Since $\lind_T P^s=0$, using Lemma \ref{lem_retract}, we get $\lind_R I^s \le \lind_T P^s=0$. Hence $I$ has Koszul powers. Similar arguments work for $J$.

For (i) $\Longrightarrow$ (ii): Applying Proposition \ref{prop_idealswithKoszulpowers}, we see that $J$ is of small type. By Theorem \ref{thm_sumswithlinearKoszulpowers}, we conclude the proof.
\end{proof}
The following result supplies computation of the linearity defect of powers for a non-trivial class of ideals.
\begin{thm}
Let $(R_1,\mm_1),\ldots,(R_c,\mm_c)$ be Koszul, reduced $k$-algebras, where $c\ge 1$. For each $i=1,\ldots,c$, let $(0) \neq I_i \subseteq \mm_i^2$ be a homogeneous ideal of $R_i$ such that all the powers of $I_i$ are Koszul. Denote $P=I_1+I_2+\cdots+I_c\subseteq T=R_1\otimes_k \cdots \otimes_k R_c$ the mixed sum of $I_1,\ldots,I_c$. Then $\lind_T P^s=c-1$ for all $s\ge 1$. 
\end{thm}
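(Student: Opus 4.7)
The plan is to induct on $c\ge 1$, proving simultaneously the three assertions:
(a) $P_c:=I_1+\cdots+I_c$ is of doubly small type in $T_c:=R_1\otimes_k\cdots\otimes_k R_c$;
(b) $P_c^s\neq 0$ for every $s\ge 1$;
(c) $\lind_{T_c} P_c^s=c-1$ for every $s\ge 1$.
Carrying (a) and (b) along is what will keep the induction alive, since Theorem \ref{thm_boundld_mixedsum}(ii) and Theorem \ref{thm_sumsofsmalldoublysmall}(ii) require both hypotheses on their inputs.

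For the base case $c=1$, we have $T_1=R_1$ and $P_1=I_1$. Since $R_1$ is Koszul, $I_1\subseteq\mm_1^2$, and all powers of $I_1$ are Koszul, Proposition \ref{prop_idealsofsmalltype}(ii) yields (a). Reducedness of $R_1$ gives (b). Koszulness of $I_1^s$ gives $\lind_{R_1} I_1^s=0=c-1$, which is (c).

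For the inductive step, let $c\ge 2$ and set $T'=R_1\otimes_k\cdots\otimes_k R_{c-1}$, so $T_c=T'\otimes_k R_c$ and $P_c=P_{c-1}+I_c$. By the inductive hypothesis, $P_{c-1}$ is of doubly small type in $T'$, satisfies $P_{c-1}^s\neq 0$, and has $\lind_{T'} P_{c-1}^s=c-2$ for all $s\ge 1$. By Proposition \ref{prop_idealsofsmalltype}(ii), $I_c$ is of doubly small type; and $R_c$ being reduced gives $I_c^s\neq 0$. Now Theorem \ref{thm_boundld_mixedsum}(ii), applied to the pair $(P_{c-1},I_c)$, yields the equality
\[
\lind_{T_c} P_c^s=\max_{i\in[1,s-1],\,j\in[1,s]}\!\left\{\lind_{T'} P_{c-1}^{s-i}+\lind_{R_c} I_c^i,\ \lind_{T'} P_{c-1}^{s-j+1}+\lind_{R_c} I_c^j+1\right\}.
\]
Since every $\lind_{R_c} I_c^m=0$ (powers of $I_c$ are Koszul) and every $\lind_{T'} P_{c-1}^m=c-2$, the inner expressions collapse to $c-2$ and $c-1$ respectively; for $s=1$ only the ``$+1$'' family is present. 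In either case the maximum is $c-1$, establishing (c) at stage $c$. Finally, Theorem \ref{thm_sumsofsmalldoublysmall}(ii) applied to $(P_{c-1},I_c)$ upgrades the conclusion to (a) for $P_c$, and (b) is immediate from $I_1^s\subseteq P_c^s$.

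There is no real obstacle beyond properly threading the auxiliary properties (a) and (b) through the induction, since Theorems \ref{thm_boundld_mixedsum} and \ref{thm_sumsofsmalldoublysmall} and Proposition \ref{prop_idealsofsmalltype} furnish exactly the needed hereditary properties. The only point that demands care is checking the non-vanishing of powers (to legitimately invoke Theorem \ref{thm_boundld_mixedsum}(ii)), which is why we bundle (b) into the induction rather than appealing to reducedness of $T'$, which need not hold in general.
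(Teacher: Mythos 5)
Your proof is correct and follows essentially the same inductive strategy as the paper's; the only cosmetic difference is that you peel off $I_c$ whereas the paper peels off $I_1$, and you make explicit the bookkeeping (doubly small type, non-vanishing of powers) that the paper handles a bit more tersely. Your remark about not relying on reducedness of $T'$ is a nice precaution, and the paper too justifies non-vanishing of the powers of the partial sum directly from the factors rather than from reducedness of the tensor product.
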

\begin{proof}
We proceed by induction on $c\ge 1$. The case $c=1$ is a tautology. Assume that $c\ge 2$.

By the hypothesis and Proposition \ref{prop_idealswithKoszulpowers}, $I_i$ is of doubly small type for all $1\le i\le c$. Denote $J=I_2+\cdots+I_c \subseteq S=R_2\otimes_k \cdots \otimes_k R_c$, then by Theorem \ref{thm_sumsofsmalldoublysmall}(ii), $J$ is of doubly small type. By the induction hypothesis, $\lind_S J^s=c-2$ for all $s\ge 1$. Note that all the powers of $J$ are non-zero, since all the powers of $I_2,\ldots,I_c$ are so. Hence applying Theorem \ref{thm_boundld_mixedsum}(ii) to the sum $P=I_1+J$, we get that for any $s\ge 1$,
\[
\lind_T P^s=\max_{i\in [1,s-1], j\in [1,s]}\{\lind_{R_1} I_1^{s-i}+\lind_S J^i, \lind_{R_1} I_1^{s-j+1}+\lind_S J^j+1 \}=c-1.
\]
This concludes the induction and the proof.
\end{proof}

\subsection*{Acknowledgments}
This project was completed in part thanks to our visits to the University of Nebraska -- Lincoln in March 2015 and the Vietnam Institute for Advanced Study in Mathematics (VIASM) in April and May 2016. We are grateful to the highly efficient staff and the inspiring working environment at the VIASM, which made our stay an enjoyable experience.  We are grateful to N.V. Trung, T.N. Trung for many useful suggestions related to this paper. Thanks are also due to Luchezar Avramov, Hailong Dao and Jeff Mermin for some useful discussions on the terminology. Both authors thank T. H\`a-N.V. Trung-T.N. Trung and J. Herzog-C.Huneke for the friendly discussions about their papers \cite{HTT} and \cite{HeHu}, respectively.

We would like to thank the anonymous referee not only for many valuable suggestions on professionalism, but also for useful comments on the presentation of our paper.

The first author was a Marie Curie fellow of the Istituto Nazionale di Alta Matematica.

\end{document}